\newtheorem{theorem}{Theorem}[section]
\newtheorem{Lem}[theorem]{Lemma}
\newtheorem{Prop}[theorem]{Proposition}
\newtheorem{conjecture}[theorem]{Conjecture}
\theoremstyle{definition}
\newtheorem{Def}[theorem]{Definition}
\newtheorem{example}[theorem]{Example}
\theoremstyle{remark}
\renewcommand{\theclaim}{\textup{\theclaim}}
\numberwithin{equation}{section}
\def\openone
\newbox\ipbox
\newcommand{\diracb}[1]{\left\langle #1\mathrel{\mathchoice

{\setbox\ipbox=\hbox{$\displaystyle \left\langle\mathstrut
#1\right.$}

\vrule height\ht\ipbox width0.25pt depth\dp\ipbox}

{\setbox\ipbox=\hbox{$\textstyle \left\langle\mathstrut
#1\right.$}

\vrule height\ht\ipbox width0.25pt depth\dp\ipbox}

{\setbox\ipbox=\hbox{$\scriptstyle \left\langle\mathstrut
#1\right.$}

\vrule height\ht\ipbox width0.25pt depth\dp\ipbox}

{\setbox\ipbox=\hbox{$\scriptscriptstyle \left\langle\mathstrut
#1\right.$}

\vrule height\ht\ipbox width0.25pt depth\dp\ipbox}

}\right. }
\newcommand{\dirack}[1]{\left. \mathrel{\mathchoice

{\setbox\ipbox=\hbox{$\displaystyle \left.\mathstrut
#1\right\rangle$}

\vrule height\ht\ipbox width0.25pt depth\dp\ipbox}

{\setbox\ipbox=\hbox{$\textstyle \left.\mathstrut
#1\right\rangle$}

\vrule height\ht\ipbox width0.25pt depth\dp\ipbox}

{\setbox\ipbox=\hbox{$\scriptstyle \left.\mathstrut
#1\right\rangle$}

\vrule height\ht\ipbox width0.25pt depth\dp\ipbox}

{\setbox\ipbox=\hbox{$\scriptscriptstyle \left.\mathstrut
#1\right\rangle$}

\vrule height\ht\ipbox width0.25pt depth\dp\ipbox}

} #1\right\rangle}
\newcommand{\B}{\mathcal{B}}
\newcommand{\beq}{\begin{equation}}
\newcommand{\eeq}{\end{equation}}
\def\blfootnote{\xdef\@thefnmark{}\@footnotetext}
\def\R{\mathbb{R}}
\def\N{\mathbb{N}}
\def\A{\mathcal{A}}
\def\E{\mathcal E}
\def\Q{\mathbb{Q}}
\def\-{^{-1}}
\def\B{\mathcal{B}}
\def\D{\mathcal{D}}
\def\C{\mathbb{C}}
\def\Z{\mathbb{Z}}
\def\A{\mathcal{A}}
\title{Classification of spectral self-similar measures with four-digit elements}
\author{Lixiang An}
\address{[Lixiang An] School of Mathematics and Statistics,
$\&$ Hubei Key Laboratory of Mathematical Sciences,
Central China Normal University,
Wuhan 430079,
P.R. China.}
 \email{anlixianghai@163.com}
\author{Xinggang He}
\address{[Xinggang He] School of Mathematics and Statistics,
$\&$ Hubei Key Laboratory of Mathematical Sciences,
Central China Normal University,
Wuhan 430079,
P.R. China.}
 \email{xingganghe@163.com}
\author{Chun-Kit Lai}
\address{[Chun-Kit Lai] Department of Mathematics, San Francisco State University,
1600 Holloway Avenue, San Francisco, CA 94132.}
 \email{cklai@sfsu.edu}
\date{}
\subjclass[2010]{42B10,28A80,42C30}
\keywords{Hadamard triples, Self-similar measures, Spectral measures}
\thanks{The research of Lixiang An and Xinggang He is supported by NSFC grant 12171181 and 11971194.}
\begin{document}

\begin{abstract}
    Let $\mu$ be a self-similar measure generated by iterated function system of four maps of equal contraction ratio $0<\rho<1$. We study when $\mu$ is a spectral measure which means that it admits an exponential orthonormal basis $\{e^{2\pi i \lambda x}\}_{\lambda\in\Lambda}$ in $L^2(\mu)$. By combining  previous results of many authors and a careful study of some new cases, we completely classify all spectral self-similar measures with four maps (Theorem \ref{main-theorem}). Moreover, the case allows us to propose a modified {\L}aba-Wang conjecture concerning when the self-similar measures are spectral in general cases.  
\end{abstract}

\maketitle
\begin{center}
{\it Dedicated to the memory of Professor Ka-Sing Lau}
\end{center}

\section{introduction}
Let $\mu$ be a Borel probability measure and let $\Lambda$ be a countable set in ${\Bbb R}$ and denote by
 $E(\Lambda)=\{e_\lambda: \lambda\in\Lambda\}$ where
 $e_\lambda(x)=e^{2\pi i\lambda x}$. We say that $\Lambda$ is a {\it spectrum} if $E(\Lambda)$ is an orthonormal basis for $L^2(\mu)$. If $L^2(\mu)$ admits a spectrum, then $\mu$ is called a {\it spectral measure} and we will sometimes call $(\mu, \Lambda)$ a {\it spectral pair}. A natural question in Fourier analysis would be
 
 \medskip
 
 {\bf (Qu):} Which Borel probability measure $\mu$ is a spectral measure? 
 
 \medskip
 
 Thanks to the result in \cite{HLL2013}, a spectral measure must be of pure type in the sense that it is either purely absolutely continuous, purely discrete or purely singular with respect to Lebesgue measure without any atoms. 
 
 \medskip
 
 In \cite{DL2014}, we know that an absolutely continuous spectral measure must be  the normalized Lebesgue measure supported on a measurable set $\Omega$ with finite positive measure. Hence, classifying absolutely continuous spectral measure reduced to the infamous Fuglede's conjecture \cite{F1974} which used to propose that $L^2(\Omega)$ admits a spectrum if and only if $\Omega$ is a translational tile. The conjecture is now known to be false in its full generality \cite{T2004,KM1,KM2} on dimension 3 or higher, but the conjecture was recently shown to be true if $\Omega$ is a convex domain \cite{LM2022}. This conjecture remains open in dimension 1 and 2 and the precise classification of spectral sets  remains as a highly interesting problem for the many researchers.

 \medskip
 
 In the purely discrete case, it is not too difficult to show that a discrete measure is spectral implies that it has only finitely many atoms and all the atom are of equal weight. i.e. $\mu = \delta_{\A} = \frac1{\#\A} \sum_{a\in\A}\delta_a$ where $\delta_a$ is the Dirac measure at $a$. Most progress has been  made for $\A$ having more additive structure. As a counter-example of the Fuglede's conjecture on finite groups immmediately leads to a counterexample in the classical Fuglede's conjecture, a lot of effort has been spent on studying the case where $\A$ is a subset of a finite group in which the spectrum is also taken from the same group. We refer readers to see \cite{LL2022, M2022} and the reference therein for some recent advance.

 \medskip
 
 Spectral singular measures without any atoms were first discovered by Jorgensen and Pedersen \cite{JP1998}. They discovered that the middle-fourth Cantor measure is a spectral measure while the middle-third Cantor measure is not spectral. There has been many new spectral singular measures discovered since then. The construction is first obtained from a digit set $\D$ whose Dirac measure $\delta_{\D}$ is a spectral measure with a spectrum ${\mathcal L}$ is a finite group $\Z^d/R(\Z^d)$ where $R$ is an integral expanding matrix (i.e. all eigenvalues has moduii larger than 1). We call $(R,\D,{\mathcal L})$  a {\it Hadamard triple}. Then we use $R$ and $\D$ to generate an iterated function system, $\{\phi_d(x) = R^{-1}(x+d):d\in\D\}$. The  {\it self-affine measure} of the iterated function system is the unique invariant measure $\mu = \mu_{R,\D}$ satisfying the invariant equation 
 $$
 \mu (E) = \sum_{d\in\D}\frac{1}{\#\D} \mu (\phi_d^{-1}(E)), \ \forall E \ \mbox{Borel}.
 $$
 Dutkay, Hausermann and the third-named author showed that all the above measures are spectral measures \cite{DHL2019}, while the case on $\R^1$ was first settled by {\L}aba and Wang \cite{LW2002}. The key reason for these self-affine measures to be spectral is that $\mu_{R,\D}$ is an infinite convolution of discrete measures
 $$
 \mu_{R,\D} = \delta_{R^{-1}\D}\ast\delta_{R^{-2}\D}\ast\delta_{R^{-3}\D}\ast...
 $$
 The spectrality of the first $n$ convolution are all be determined, and certain limiting arguments are investigated to determine the spectrality of its weak $-{\ast}$ limit $\mu_{R,\D}$.  Building on this prototypical result, other singular spectral measures were also studied by randomly convoluting discrete measures generated by different Hadamard triples (see e.g. \cite{AFL2019,LMW2022}).   This makes up one of the main categories of singular spectral measures. For other singular spectral measures, one may see \cite{LLP2021} in which surface measures on polytope are also spectral. 
 
 \medskip
 
\subsection{A review of the {\L}aba-Wang conjecture.}  Because we need to satisfy the rigid orthogonal equation in order to form spectral measures, it is particularly difficult for infinite convolution of discrete measures.  It has been asked if it is possible to have a classification of spectral self-similar measures. Recall that if we are given $0<\rho<1$, a digit set $\D = \{0,d_1,...,d_{m-1}\}$ and a probability vector ${\bf p}  = (p_0,...,p_{m-1)}$,  consider the iterated function system (IFS) $\phi_j(x) = \rho (x+d_j)$, the self-similar measure $\mu = \mu_{\rho,\D,{\bf p}}$ is the unique probability measure satisfying the self-similar identity
 $$
 \mu(E) = \sum_{j=0}^{m-1} p_j \mu (\phi_j^{-1}(E)).
 $$
This is the conjecture {\L}aba and Wang proposed \cite{LW2002}.
 
 \begin{conjecture}
Suppose that the self-similar measure $\mu_{\rho,\D,{\bf p}}$ is a spectral measure. Then 
\begin{enumerate}
    \item ${\bf p}$ is a equal-weight probability vector. i.e. $p_j = 1/m.$
    \item $\rho = 1/N$ for some integer $N$.
    \item There exists $\alpha>0$ such that $\D = \alpha {\mathcal C}$ where ${\mathcal C}\subset \Z^+$ and ${\mathcal C}$ tiles $\Z_N$. i.e. there exists $\B$ such that ${\mathcal C}\oplus\B\equiv \Z_N$ (mod N).
\end{enumerate}
 \end{conjecture}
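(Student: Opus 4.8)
The plan is to pass to the Fourier side and read off all three conclusions from the mask of the self-similar identity. Writing $m_{\mathbf p}(\xi)=\sum_{j=0}^{m-1}p_j e^{2\pi i d_j\xi}$, the identity $\mu(E)=\sum_j p_j\,\mu(\phi_j^{-1}(E))$ iterates to $\widehat\mu(\xi)=\prod_{k=1}^{\infty}m_{\mathbf p}(\rho^k\xi)$, so the zero set factors as $Z(\widehat\mu)=\bigcup_{k\ge 1}\rho^{-k}Z(m_{\mathbf p})$. If $\Lambda$ is a spectrum normalized so that $0\in\Lambda$, then every difference $\lambda-\lambda'$ of distinct spectral points lies in $Z(\widehat\mu)$, and the Parseval identity $\sum_{\lambda\in\Lambda}|\widehat\mu(\xi+\lambda)|^2\equiv 1$ holds. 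These two facts, the zero-set structure and the frame identity, are the analytic inputs I would exploit throughout.

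For (1), I would first try to show that spectrality of the infinite convolution forces spectrality of its one-level discrete factor $\delta_{\mathcal D,\mathbf p}=\sum_j p_j\delta_{d_j}$. The mechanism is the coding/tree structure: when the pieces $\phi_j(\mathrm{supp}\,\mu)$ are essentially disjoint (the generically available open set condition), $L^2(\mu)$ splits as the $\mathbf p$-weighted direct sum of $m$ rescaled copies of itself, so a spectrum of $\mu$ must distribute compatibly across the $m$ branches; pushing the orthonormal basis down to the quotient by the common refinement should yield a spectrum for $\delta_{\mathcal D,\mathbf p}$. Since, as recalled in the introduction, a discrete spectral measure has finitely many atoms all of equal weight, this delivers $p_j=1/m$ at once.

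For (2) and (3) I would work with the resulting equal-weight mask $m(\xi)=\frac1m\sum_{c}e^{2\pi i\alpha c\xi}$ after factoring $\mathcal D=\alpha\mathcal C$. For (2), the existence of an orthonormal \emph{basis}, not merely an orthogonal set, pins down the growth rate of the counting function $\#(\Lambda\cap[-T,T])$ at the precise rate dictated by the similarity dimension of $\mu$; matching this count against the distribution of available zeros in $\bigcup_k\rho^{-k}Z(m)$ forces $1/\rho$ to be rational, and after clearing denominators and re-examining $Z(m)$ on the resulting lattice, to be an integer $N$. For (3), once $\rho=1/N$ the mask lives on $\frac1N\Z$, and orthogonality together with completeness of $\Lambda$ translate into a spectral-set condition for $\mathcal C$ inside the finite cyclic group $\Z_N$; invoking that a spectral subset of $\Z_N$ must tile (known in many cases, e.g. via Coven--Meyerowitz type factorizations for the relevant $N$) would produce $\mathcal B$ with $\mathcal C\oplus\mathcal B\equiv\Z_N$.

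The hard part is precisely this last reduction. The serious obstacle is that a spectrum of the \emph{infinite} convolution need not respect the natural $N$-adic tree structure, so there is no a priori reason a ``wild'' spectrum cannot certify spectrality without a clean finitary tiling on $\Z_N$; controlling such spectra is exactly where the analytic-to-combinatorial passage can break down. Even granting $\rho=1/N$ and equal weights, deducing a genuine tiling of $\Z_N$ rather than merely a spectral subset (the two are not known to coincide in all finite groups) is the crux, and it is the point at which the original statement must be weakened into the modified {\L}aba--Wang conjecture proposed here; the four-digit classification is the principal evidence that the modified version, but not necessarily the verbatim original, should hold.
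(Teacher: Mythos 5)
The statement you are proving is presented in the paper as a \emph{conjecture}, not a theorem, and in fact the paper explicitly records that part (iii) as written is \emph{false}: the Dutkay--Jorgensen example $N=4$, $\mathcal{D}=\{0,1,8,9\}$ yields a spectral self-similar measure (Lebesgue measure on $[0,1]\cup[2,3]$, with spectrum $\mathbb{Z}+\{0,1/4\}$), yet $\gcd(\mathcal{D})=1$ forces $\mathcal{C}=\{0,1,8,9\}\equiv\{0,1,0,1\}\pmod 4$, which cannot satisfy $\mathcal{C}\oplus\mathcal{B}\equiv\mathbb{Z}_4$. This is precisely why the paper introduces product-form Hadamard triples and replaces (iii) by the modified {\L}aba--Wang conjecture. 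Your closing paragraph correctly senses that the statement ``must be weakened,'' but the proposal is nonetheless organized as a proof of a statement the paper itself refutes; there is no proof in the paper to compare against, only partial results ((i) from Deng--Chen, (ii) from An--Wang under an extra hypothesis) and a counterexample to (iii).

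Beyond that structural issue, each of your three reductions has a concrete gap. For (1) you invoke an open set condition to split $L^2(\mu)$ into $m$ branches, but no separation hypothesis is available or assumed; the Deng--Chen theorem the paper quotes establishes equal weights for arbitrary $0<\rho<1$ and $\mathcal{D}\subset\mathbb{R}$ by a genuinely different argument, and your ``push the basis down to the quotient'' step is not justified when the pieces overlap. For (2), the An--Wang theorem requires $\mathcal{Z}(M_{\mathcal{D}})\subset\alpha\mathbb{Z}$, and the paper's five-digit example $\mathcal{D}=\{0,1,3,5,6\}$ shows the mask can have only irrational zeros, where your counting-function argument is not known to force $1/\rho\in\mathbb{Z}$ (indeed the paper spends all of Section 3 ruling out irrational digits in the four-element case by Ramsey-theoretic and algebraic-independence arguments, not by counting). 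For (3), the passage from a spectrum of the infinite convolution to a spectral-set condition for $\mathcal{C}$ in $\mathbb{Z}_N$ is exactly the step the $\{0,1,8,9\}$ example destroys: the spectrum of $\mu$ need not restrict to a spectrum of $\delta_{\mathcal{D}}$ modulo $N$, and the correct finitary object is a product-form Hadamard triple at some power $N^k$, not a tiling of $\mathbb{Z}_N$.
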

 The above direct sum means that $c+b$ are all distinct elements for all $c\in{\mathcal C}$ and $b\in\B$ and they form a complete residue classes modulo $N$. Condition (i) has been completely answered by Deng and Chen \cite{DC2021}.  For earlier results, please see \cite{DL2014}.
 
 \begin{theorem}[Deng and Chen, \cite{DC2021}]\label{Theorem_DC}
Let $\mu_{\rho,\D,{\bf p}}$ be a self-similar measure with $0<\rho<1$, $\D\subset\R$ and ${\bf p}$ is a probability vector.  Suppose that $\mu_{\rho,\D,{\bf p}}$ is a spectral measure. Then ${\bf p}$ must be a equal-weighted probability vector. 
\end{theorem}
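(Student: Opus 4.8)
The plan is to run the argument entirely inside $L^2(\mu)$, feeding the Parseval identity for the orthonormal basis $E(\Lambda)=\{e_\lambda\}_{\lambda\in\Lambda}$ the right test functions, so that the probability vector $\mathbf p$ is forced out directly. Write $\mu_j:=(\phi_j)_*\mu$ for the push-forward of $\mu$ under $\phi_j(x)=\rho(x+d_j)$, so that the self-similar identity reads $\mu=\sum_{j=0}^{m-1}p_j\mu_j$. Since every summand is a nonnegative measure we have $p_j\mu_j\le\mu$, hence $\mu_j\ll\mu$, and the Radon--Nikodym derivative $h_j:=d\mu_j/d\mu$ satisfies $0\le h_j\le 1/p_j$ and $\int h_j\,d\mu=\mu_j(\R)=1$; in particular $h_j\in L^\infty(\mu)\subset L^2(\mu)$. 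These $h_j$ are the test functions I would use.

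The key computation is the inner product $\langle h_j,e_\lambda\rangle_{L^2(\mu)}=\int e^{-2\pi i\lambda x}\,d\mu_j(x)=e^{-2\pi i\rho d_j\lambda}\,\hat\mu(\rho\lambda)$, whose modulus is $|\hat\mu(\rho\lambda)|$ and is therefore \emph{independent of $j$}. Substituting into the Parseval identity $\|h_j\|_{L^2(\mu)}^2=\sum_{\lambda\in\Lambda}|\langle h_j,e_\lambda\rangle|^2$ yields
$$
\int h_j^2\,d\mu=\sum_{\lambda\in\Lambda}|\hat\mu(\rho\lambda)|^2=:C,
$$
a quantity that does not depend on $j$. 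This single identity is the engine of the whole proof: it says all the $h_j$ have the same $L^2(\mu)$-norm, whatever the weights and the digit set are, and it uses nothing beyond the self-similar identity and the orthonormal-basis property.

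Under an essential-disjointness (open set) hypothesis the conclusion is then immediate. If the pieces $\phi_j(X)$, with $X=\operatorname{supp}\mu$, are $\mu$-essentially disjoint, restricting $\mu=\sum_k p_k\mu_k$ to $\phi_j(X)$ gives $h_j=p_j^{-1}\mathbf 1_{\phi_j(X)}$, so that $\int h_j^2\,d\mu=p_j^{-2}\mu(\phi_j(X))=1/p_j$. Comparing with the displayed identity forces $1/p_j=C$ for every $j$, whence all weights coincide and $p_j=1/m$ by $\sum_j p_j=1$ (and as a byproduct $C=m$). This settles the separated case cleanly.

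The main obstacle is the \emph{overlapping} case, which the statement allows since $\D\subset\R$ and $0<\rho<1$ are arbitrary. When the $\phi_j(X)$ overlap, $h_j$ is no longer $\{0,p_j^{-1}\}$-valued, and the bound $h_j\le 1/p_j$ only yields the one-sided estimate $\int h_j^2\,d\mu\le p_j^{-1}\int h_j\,d\mu=1/p_j$, i.e.\ $p_j\le 1/C$ for all $j$; summing gives merely $C\le m$. To upgrade this to equality for every $j$ one needs genuinely new input: either a separation theorem showing that a spectral self-similar measure cannot have overlaps, or a finer analysis of the cross-correlations $\langle h_j,h_k\rangle_{L^2(\mu)}=\sum_{\lambda\in\Lambda}e^{-2\pi i\rho(d_j-d_k)\lambda}|\hat\mu(\rho\lambda)|^2$ subject to the pointwise constraint $\sum_j p_j h_j\equiv 1$. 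I expect this overlap analysis, rather than the soft Parseval setup preceding it, to be where the real difficulty lies.
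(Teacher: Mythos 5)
First, a remark on the comparison itself: the paper does not prove this statement — it is imported verbatim from Deng and Chen \cite{DC2021} and used as a black box — so there is no internal proof to measure your attempt against; it has to stand on its own. Judged that way, your proposal contains a genuine gap, one you have correctly located yourself. The Parseval setup is sound: $p_j\mu_j\le\mu$ does give $h_j=d\mu_j/d\mu\in L^\infty(\mu)$ with $0\le h_j\le 1/p_j$, the identity $\langle h_j,e_\lambda\rangle_{L^2(\mu)}=e^{-2\pi i\rho d_j\lambda}\,\widehat{\mu}(\rho\lambda)$ is correct, and hence $\int h_j^2\,d\mu=\sum_{\lambda\in\Lambda}|\widehat{\mu}(\rho\lambda)|^2=C$ is indeed independent of $j$. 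In the $\mu$-essentially disjoint case your computation $\int h_j^2\,d\mu=1/p_j$ is also correct, and the conclusion follows. But the theorem is stated for arbitrary $\D\subset\R$ and arbitrary $0<\rho<1$, where the pieces $\phi_j(X)$ of the attractor can overlap on a set of positive $\mu$-measure (already for $m=2$, $\D=\{0,1\}$ and $\rho>1/2$ the two images of the attractor overlap on a nonempty relatively open subset of the support, which has positive measure). In that regime $h_j$ is no longer $\{0,p_j^{-1}\}$-valued, and the only surviving estimate $p_j\le 1/C$, summed to $C\le m$, does not determine the weights.

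The missing step is therefore exactly the one you flag: either a theorem asserting that a spectral self-similar measure must have $\mu$-essentially disjoint pieces, or a direct analysis of the cross-terms $\langle h_j,h_k\rangle$ under the constraint $\sum_j p_jh_j\equiv 1$. Neither is available as a black box here, and neither is routine — in Deng--Chen's actual argument the uniformity of the weights is extracted without any separation hypothesis, working directly from $Q(\xi)=\sum_{\lambda\in\Lambda}|\widehat{\mu}(\xi+\lambda)|^2\equiv 1$ together with the refinement relation $\widehat{\mu}(\xi)=\bigl(\sum_j p_je^{-2\pi i\rho d_j\xi}\bigr)\widehat{\mu}(\rho\xi)$, and that analysis is the substance of their paper. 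So what you have is a clean proof of a special case plus an honest acknowledgment of the remaining obstruction, not a proof of the theorem as stated.
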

 
 We will from now on denote $\mu_{\rho,\D}$ to be the self-similar measure with equal-weighted probability vector. There are also satisfactory answer to condition (ii) by the first-named author and Wang \cite{AW2021}, whose results were attributed to  a series of earlier studies of the spectrality of the Bernoulli convolution and $N$-Bernoulli measures by Dai, He, Hu and Lau \cite{HL2008,D2012,DHL2014}.

 \begin{theorem}[An and Wang,\cite{AW2021}]\label{lem8}
Let $\mu_{\rho, \D}$ be a self-similar with $0<\rho<1$ and $\D\subset\R$.  Suppose that there exists $\alpha>0$ such that ${\mathcal Z}(M_{\D})\subset {\alpha}{\Bbb Z}$. If $\mu_{\rho, \D}$ is a spectral measure, then $\rho^{-1}=N\in \N$.
\end{theorem}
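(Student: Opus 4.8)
The plan is to pass to the Fourier side, read off the zero set of $\hat\mu_{\rho,\D}$, and then force integrality of $\rho^{-1}$ out of the orthogonality and completeness constraints that a spectrum must obey. Writing $M_{\D}(\xi)=\frac{1}{\#\D}\sum_{d\in\D}e^{2\pi i d\xi}$ for the mask, the self-similar identity yields the factorization $\hat\mu(\xi)=M_{\D}(\rho\xi)\,\hat\mu(\rho\xi)$ and hence $\hat\mu(\xi)=\prod_{k\ge 1}M_{\D}(\rho^{k}\xi)$; since $|M_{\D}|\le 1$ and $M_{\D}(0)=1$, the product converges locally uniformly and vanishes exactly when one factor does, so
\[
\mathcal{Z}(\hat\mu)=\bigcup_{k\ge 1}\rho^{-k}\,\mathcal{Z}(M_{\D}).
\]
A first normalization removes $\alpha$: pushing $\mu$ forward under $x\mapsto\alpha x$ replaces $\D$ by $\alpha\D$, leaves the ratio $\rho$ unchanged, and scales $\mathcal{Z}(M_{\D})$ by $\alpha^{-1}$, so I may assume $\mathcal{Z}(M_{\D})\subset\Z$ and therefore $\mathcal{Z}(\hat\mu)\subset\bigcup_{k\ge 1}\beta^{k}\Z$, where $\beta:=\rho^{-1}>1$.

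Next I would feed in spectrality. Normalizing $0\in\Lambda$, orthogonality of $E(\Lambda)$ says $\langle e_\lambda,e_{\lambda'}\rangle=\hat\mu(\lambda-\lambda')=0$ for distinct $\lambda,\lambda'$, i.e. $(\Lambda-\Lambda)\setminus\{0\}\subset\mathcal{Z}(\hat\mu)\subset\bigcup_{k\ge 1}\beta^{k}\Z$, while the Jorgensen--Pedersen criterion supplies the completeness identity $\sum_{\lambda\in\Lambda}|\hat\mu(\xi+\lambda)|^{2}\equiv 1$. The target $\beta\in\N$ now becomes a statement about how the dilated lattices $\beta\Z,\beta^{2}\Z,\dots$ are permitted to overlap, since every pairwise difference of spectral points must land in one of them. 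As a first, weaker step I would aim for $\beta\in\Q$: choosing nonzero $\lambda_1,\lambda_2\in\Lambda$ with $\lambda_1\ne\lambda_2$, the three points $\lambda_1,\lambda_2,\lambda_1-\lambda_2$ each lie in some $\beta^{k_i}\Z$, producing an integer relation $\beta^{k_1}n_1-\beta^{k_2}n_2-\beta^{k_3}n_3=0$; running this over well-chosen families of spectral points (and using that $\mathcal{Z}(M_{\D})$ is a discrete set of integers) pins $\beta$ down as an algebraic, and then rational, number.

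The decisive step is to upgrade $\beta\in\Q$ to $\beta\in\N$, and here I would lean on completeness rather than orthogonality alone. The mechanism is that a genuine spectrum of $\mu$ must carry a self-similar tree structure adapted to $x\mapsto\rho x$: unwinding $\hat\mu(\xi)=M_{\D}(\rho\xi)\hat\mu(\rho\xi)$ in the identity $\sum_\lambda|\hat\mu(\xi+\lambda)|^{2}\equiv 1$ forces the sum to split scale by scale, so that at each level the finite pattern $\mathcal{Z}(M_{\D})$ must tile a residue system against an appropriate lattice. For these nested decompositions to close up and actually \emph{cover}---not merely to be mutually orthogonal---the consecutive dilation $\beta^{k+1}/\beta^{k}=\beta$ must carry the level-$k$ integer lattice into the level-$(k+1)$ one, which is possible only when $\beta$ is a positive integer; a non-integral rational $\beta=p/q$ places points of $\bigcup_k\beta^{k}\Z$ at incompatible denominators and destroys the tiling the completeness identity requires. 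This is precisely the generalization, to arbitrary $\D$ with $\mathcal{Z}(M_{\D})\subset\alpha\Z$, of the Bernoulli-convolution analysis of Dai, He, Hu and Lau on which the theorem rests.

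I expect the genuine obstacle to be exactly this scale-mixing phenomenon: controlling differences of spectral points that arise from \emph{different} powers $\beta^{k}$ at once, and proving rigorously that a multi-scale orthogonal family inside $\bigcup_k\beta^{k}\Z$ can be completed to an orthonormal basis only when all the scales are commensurate over $\Z$, that is $\beta=\rho^{-1}\in\N$. Everything before that---the factorization of $\hat\mu$, the description of $\mathcal{Z}(\hat\mu)$, the normalization $\alpha=1$, and the rationality of $\beta$---is comparatively routine, and the hypothesis $\mathcal{Z}(M_{\D})\subset\alpha\Z$ is what makes the crucial tiling argument available in the first place.
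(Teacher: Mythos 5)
This statement is quoted by the paper from An--Wang \cite{AW2021} without proof, so there is no in-paper argument to compare against; judged on its own terms, your proposal is a roadmap rather than a proof, and both of its decisive steps have genuine gaps. The preliminary material is fine: the factorization $\widehat{\mu}(\xi)=\prod_{k\ge1}M_{\D}(\rho^k\xi)$, the identity ${\mathcal Z}(\widehat{\mu})=\bigcup_{k\ge1}\rho^{-k}{\mathcal Z}(M_{\D})$, the normalization $\alpha=1$, and the containment $(\Lambda-\Lambda)\setminus\{0\}\subset\bigcup_{k\ge1}\beta^k\Z$ are all routine. But the passage from there to $\beta\in\Q$ is only asserted: a single relation $\beta^{k_1}n_1-\beta^{k_2}n_2=\beta^{k_3}n_3$ shows $\beta$ is algebraic, and "running this over well-chosen families" is not an argument that its minimal polynomial has degree one. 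The paper itself has to work hard in exactly this regime (Propositions \ref{prop2.2} and \ref{prop2.3} treat $\rho=(n/m)^{1/r}$ with $r\ge2$ using the minimal polynomial of degree $r$ plus the infinite Ramsey theorem), which shows that algebraic irrational contraction ratios are not excluded by naive difference relations.

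The second and more serious gap is the upgrade from $\beta\in\Q$ to $\beta\in\N$. Your proposed mechanism --- that a non-integral $\beta=p/q$ "places points of $\bigcup_k\beta^k\Z$ at incompatible denominators and destroys the tiling" --- is contradicted by results used elsewhere in this very paper: Theorem \ref{lem7} (Hu--Lau) guarantees that for $\rho=(n/m)^{1/r}$ with $m$ even and $n$ odd, \emph{infinite} bi-zero sets do exist inside such unions of dilated lattices, so no purely arithmetic incompatibility of denominators is available. What actually fails in the non-integer case is completeness, and ruling it out requires a quantitative argument with the Jorgensen--Pedersen function $Q(\xi)=\sum_{\lambda}|\widehat{\mu}(\xi+\lambda)|^2$ --- for instance a strict inequality $Q(\xi)<1$ at a well-chosen $\xi$, in the spirit of the proof of Proposition \ref{prop3.8}. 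Your sketch names this as "the genuine obstacle" but offers no mechanism to overcome it, so the theorem is not proved.
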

 
\medskip

 Condition (iii) were much more delicate to deal with since it is related to Fuglede's conjecture on cyclic groups. To avoid relating the problem to Fuglede's conjecture, one may impose another condition which will be equivalent to (iii) if Fuglede's conjecture on cyclic groups were proven.
 
 \medskip
 
 (iii)':  There exists ${\mathcal L}\subset \Z$ such that $(N,\D,{\mathcal L})$ forms a Hadamard triple.
 
 \medskip
 
\noindent Condition (iii)' is unfortunately not the correct condition to state either. A simple example, first observed by Dutkay and Jorgensen \cite{DJ2009}, already disproved (iii). Let $N = 4$ and $\D = \{0,1,8,9\}$. Then the self-similar measure $\mu_{4^{-1},\D}$ is the Lebesgue measure on $[0,1]\cup[2,3]$ which is spectral with a spectrum $\Z+\{0,1/4\}$. This is the case where the self-similar measure is absolutely continuous. In \cite{DL2014}, Dutkay and Lai proved that if a spectral self-similar measure is absolutely continuous, then {\L}aba-Wang conjecture can be solved with (iii) replaced by $\D$ tiles some cyclic groups, not necessarily $\Z_N$ where $1/N$ is the contraction ratio. Indeed, the measure must be the Lebesgue measure supported on the  self-similar tiles generated by the IFS. 

\medskip

The authors studied this kind of digit sets and introduced the product-form Hadamard triple in the previous paper \cite{AL2022}.

\begin{Def}\label{definition-product-form}
We say that $(N, \D,{\mathcal L}_1\oplus{\mathcal L}_2)$ forms a {\it product-form Hadamard triple} if there exists $\A$ and $\B_a$ for each $a\in \A$ such that 
$$
\D = \bigcup_{a\in\A} (a+ N\B_a)
$$
where 
\begin{enumerate}
    \item $(N, \A,{\mathcal L}_1)$ and $(N, \B_a,{\mathcal L}_2)$ are Hadamard triples for each $a\in\A$.
    \item $(N, \A\oplus\B_a,{\mathcal L}\oplus{\mathcal L}_2)$ are Hadamard triples for each $a\in\A$.
\end{enumerate}
\end{Def}

\medskip

It is direct to show that the counter-example is now a special case of the product-form Hadamard triple. The main result of our previous paper was to show that 

\begin{theorem}
The self-similar measure $\mu_{N,\D}$ is a spectral measure if $(N, \D,{\mathcal L}_1\oplus{\mathcal L}_2)$ forms a product-form Hadamard triple for some ${\mathcal L_1}, {\mathcal L}_2\subset \Z$.
\end{theorem}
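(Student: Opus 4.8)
The plan is to realize $\mu_{N,\D}$ as an infinite convolution whose building blocks are, up to a boundary term, the digit measures of genuine Hadamard triples, and then to run the infinite-convolution spectrality machinery of \cite{DHL2019}. One should keep in mind from the start that a product-form triple is \emph{not} itself a Hadamard triple: already for $N=4$, $\D=\{0,1,8,9\}$ the rows of the putative Hadamard matrix indexed by $0$ and $8$ coincide, so the self-affine theorem cannot be invoked directly.

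First I would unfold the convolution. Writing $\mu_{N,\D}=\delta_{N^{-1}\D}\ast\delta_{N^{-2}\D}\ast\cdots$ and substituting $\D=\bigcup_{a\in\A}(a+N\B_a)$, a generic point of the attractor is
\[
x=\sum_{k\ge1}N^{-k}d_k=\sum_{k\ge1}N^{-k}(a_k+Nb_k),\qquad a_k\in\A,\ b_k\in\B_{a_k}.
\]
The Hadamard conditions (i) force $\#\A=\#{\mathcal L}_1$ and $\#\B_a=\#{\mathcal L}_2$ to be independent of $a$, so under $\mu_{N,\D}$ the data $(a_k,b_k)$ are i.i.d.\ with $a_k$ uniform on $\A$ and $b_k$ uniform on $\B_{a_k}$. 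Pushing each $\B$-contribution up one scale gives
\[
x=b_1+\sum_{j\ge1}N^{-j}\bigl(a_j+b_{j+1}\bigr),\qquad b_{j+1}\in\B_{a_{j+1}},
\]
so the effective digit at scale $j\ge1$ is $a_j+b_{j+1}$, ranging over $\A\oplus\B_{a_{j+1}}$ once the state $a_{j+1}$ is fixed, together with a single boundary digit $b_1$ at scale $0$. This exhibits $\mu_{N,\D}$ as a Moran/Markov-type infinite convolution over the finite state set $\A$.

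Second I would assemble the candidate spectrum from the component spectra. Condition (ii) says that for every $a$ the triple $(N,\A\oplus\B_a,{\mathcal L}_1\oplus{\mathcal L}_2)$ is Hadamard \emph{with the same labelling set} ${\mathcal L}_1\oplus{\mathcal L}_2$; this uniformity across states is precisely what allows one base-$N$ digit expansion to serve as a spectrum at every scale. The candidate is
\[
\Lambda=\Bigl\{\textstyle\sum_{j\ge1}N^{\,j-1}\ell_j:\ \ell_j\in{\mathcal L}_1\oplus{\mathcal L}_2,\ \ell_j=0\text{ for all large }j\Bigr\}+\tfrac1N{\mathcal L}_2,
\]
where the finite correction $\tfrac1N{\mathcal L}_2$ (the common spectrum of every $\delta_{\B_a}$) absorbs the scale-$0$ term. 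Orthonormality of $E(\Lambda)$ then reduces, scale by scale, to the vanishing $\widehat{\mu_{N,\D}}(\lambda-\lambda')=0$ for distinct $\lambda,\lambda'\in\Lambda$, which unwinds into the orthogonality relations furnished by the Hadamard matrices of (i) and (ii).

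\textbf{The main obstacle is completeness}, i.e.\ the Jorgensen--Pedersen identity $Q_\Lambda(\xi):=\sum_{\lambda\in\Lambda}\bigl|\widehat{\mu_{N,\D}}(\xi+\lambda)\bigr|^2\equiv1$, and it is exactly the $a$-dependence of $\B_a$ that makes this step nontrivial. When $\B_a\equiv\B$ one has the clean factorization $\mu_{N,\D}=\delta_{\B}\ast\mu_{N,\A\oplus\B}$ with $\mu_{N,\A\oplus\B}$ spectral by the Hadamard-triple theorem, and completeness follows from that case; but for varying $\B_a$ the boundary digit $b_1$ stays coupled to $a_1$ and no such tensor splitting exists. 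I would instead verify the equi-positivity criterion of \cite{DHL2019} for the finite family of tail measures $\{\mu^{(a)}\}_{a\in\A}$ indexed by the current state: since $\A$ is finite and each $(N,\A\oplus\B_a,{\mathcal L}_1\oplus{\mathcal L}_2)$ is a fixed Hadamard triple with common spectrum pieces, one secures a lower bound for $|\widehat{\mu^{(a)}}|$ on a fixed neighbourhood of the origin that is uniform in $a$ and along the dynamics. Equi-positivity upgrades the orthonormal system $E(\Lambda)$ to a basis, yielding $Q_\Lambda\equiv1$ and hence the spectrality of $\mu_{N,\D}$.
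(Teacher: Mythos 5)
You should first be aware that the paper does not actually prove this theorem: it is quoted from the authors' earlier preprint \cite{AL2022}, and the only related material here is Subsection \ref{subsection 4.1}, which verifies that the specific four-element digit sets of Theorem \ref{main theo} are product-form Hadamard triples and then invokes \cite[Theorem 1.11]{AL2022}. So there is no in-paper proof to compare against, and your proposal must stand on its own. As a strategy it is the natural one: the re-indexing $x=b_1+\sum_{j\ge1}N^{-j}(a_j+b_{j+1})$ is correct, the candidate spectrum $\Lambda=N^{-1}{\mathcal L}_2\oplus\bigoplus_{j\ge1}N^{j-1}({\mathcal L}_1\oplus{\mathcal L}_2)$ is the right one, the observation that $\B_a\equiv\B$ gives the clean factorization $\delta_{\B}\ast\mu_{N,\A\oplus\B}$ is correct, and completeness is indeed the crux. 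But both substantive steps are asserted rather than proved, and each hides a real difficulty.

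For orthogonality, note that $M_{\D}(\xi)=\frac{1}{\#\A}\sum_{a\in\A}e^{-2\pi i a\xi}M_{\B_a}(N\xi)$ is \emph{not} a product of component masks, so "unwinds into the orthogonality relations of (i) and (ii)" conceals a necessary dichotomy: for a difference whose first discrepancy is $\Delta=\ell_{j_0}-\ell_{j_0}'$ with $\ell_{j_0},\ell_{j_0}'\in{\mathcal L}_1\oplus{\mathcal L}_2$, condition (ii) gives $M_{\A}(N^{-1}\Delta)\,M_{\B_a}(N^{-1}\Delta)=0$ for every $a$; since the first factor is independent of $a$, either $M_{\A}(N^{-1}\Delta)=0$, which kills the mask at scale $j_0$, or $M_{\B_a}(N^{-1}\Delta)=0$ for \emph{all} $a$, which kills it at scale $j_0+1$. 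Without this case analysis you have not even shown that $\Lambda$ is a bi-zero set. The more serious gap is completeness. By self-similarity the tail measure $\mu_{>n}(N^n\cdot)$ is $\mu$ itself, so the "finite family of tail measures indexed by the current state" does not really arise; what is needed is a uniform lower bound on $|\widehat{\mu}(N^{-n}(\xi+\lambda))|$ over $\lambda\in\Lambda_n$, and product-form digit sets are exactly where the equi-positivity verification of \cite{DHL2019} breaks as stated. Already for $N=4$, $\D=\{0,1,8,9\}$ (Lebesgue measure on $[0,1]\cup[2,3]$) one has $\widehat{\mu}(\tfrac14+k)=0$ for \emph{every} integer $k$, so no choice of integer shift rescues a point like $x=\tfrac14$; equi-positivity must be reformulated relative to the coset structure $\Lambda\subset\Z+N^{-1}{\mathcal L}_2$ and the shifts taken in the corresponding finer group. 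The sentence "one secures a lower bound $\dots$ uniform in $a$ and along the dynamics" is precisely the statement whose proof constitutes the theorem, and the proposal offers no argument for it.
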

 
 The previous paper also considered some higher stage product-form Hadamard triple. However, they will be reduced to Definition \ref{definition-product-form} by considering some higher power.  In view of this, we propose the following modified {\L}aba-Wang conjecture.
 
 \medskip
 
  \begin{conjecture} (modified {\L}aba-Wang conjecture)
Suppose that the self-similar measure $\mu_{\rho,\D,{\bf p}}$ is a spectral measure and $\D\subset \Z^{+}\cup\{0\}$ with $0\in\D$. Then 
\begin{enumerate}
    \item ${\bf p}$ is a equal-weight probability vector. i.e. $p_j = 1/m.$
    \item $\rho = 1/N$ for some integer $N\ge 2$.
    \item There exists $\alpha>0$ such that $\D = \alpha {\mathcal C}$ and there exists $m>0$ and $k>0$ such that if we define 
    $$
    {\bf D}_k = (m{\mathcal C})+N(m{\mathcal C})+...+ N^{k-1}(m{\mathcal C}),
    $$
    then $(N^k, {\bf D}_k, {\mathcal L}_1\oplus{\mathcal L}_2)$ forms a product-form Hadamard triple for some ${\mathcal L}_1, {\mathcal L}_2$. 
\end{enumerate}
 \end{conjecture}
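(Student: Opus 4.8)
The plan is to establish the three conditions in turn, leaning on the two cited reduction theorems and then concentrating the genuine work on condition (iii) in the four-digit setting that motivates the conjecture.

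Condition (i) is immediate from Theorem \ref{Theorem_DC}: spectrality of $\mu_{\rho,\D,{\bf p}}$ forces ${\bf p}$ to be equal-weighted, so we may write $\mu_{\rho,\D}$ throughout. For condition (ii) I would first extract arithmetic information from the orthogonality relations: if $(\mu_{\rho,\D},\Lambda)$ is a spectral pair, then for distinct $\lambda,\lambda'\in\Lambda$ the difference $\lambda-\lambda'$ is a zero of the entire function $\widehat{\mu}$, hence a zero at some scale of the mask $M_\D(\xi)=\frac{1}{\#\D}\sum_{d\in\D}e^{2\pi i d\xi}$. I would use this to show the zero set ${\mathcal Z}(M_\D)$ lies in $\alpha\Z$ for a suitable $\alpha>0$; here the hypothesis $\D\subset\Z^{+}\cup\{0\}$ together with $\#\D=4$ makes the zero structure rigid. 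Theorem \ref{lem8} then yields $\rho^{-1}=N\in\N$ with $N\ge 2$.

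The heart of the matter is condition (iii). After rescaling by $\alpha$ I may assume $\D={\mathcal C}\subset\Z^{+}\cup\{0\}$ with $0\in{\mathcal C}$ and $\#{\mathcal C}=4$. The strategy is threefold. First, exploit the self-similar structure to show that each finite stage must carry a compatible Hadamard structure, forcing $(N^k,{\bf D}_k,{\mathcal L})$ with ${\bf D}_k=(m{\mathcal C})+N(m{\mathcal C})+\cdots+N^{k-1}(m{\mathcal C})$ to be a Hadamard triple for a suitable ${\mathcal L}\subset\Z$. Second, translate this into a tiling condition on ${\bf D}_k$ in the cyclic group $\Z_{N^k}$ and refine the Hadamard triple to the product-form decomposition $\D=\bigcup_{a\in\A}(a+N\B_a)$ of Definition \ref{definition-product-form}. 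Third, for four digits carry out a finite case analysis on the admissible factorizations of ${\mathcal C}$ modulo powers of $N$. Because $\#{\mathcal C}=4$ the parameters are tightly constrained --- essentially $N=4$ with a genuine Hadamard triple, or $N=2$ with $k=2$ and a genuine product form --- so the classification reduces to checking a bounded list of digit patterns.

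The main obstacle is the refinement and case analysis, and specifically the new cases where ${\mathcal C}$ does not itself sit inside a single Hadamard triple but only does so after passing to ${\bf D}_k$: one must rule out the spurious configurations that pass the first few orthogonality tests yet fail to generate a complete orthonormal basis, and certify that the survivors satisfy both clauses of Definition \ref{definition-product-form}. This requires controlling the completeness of $E(\Lambda)$ for the infinite convolution --- a Jorgensen--Pedersen type equidistribution argument on the escaping mass of the zeros of $\widehat{\mu}$ --- rather than any single algebraic identity. I expect the fully general conjecture to stay out of reach precisely because this completeness step, together with the dependence on Fuglede's conjecture for cyclic groups latent in clause (iii), does not obviously stabilize as $\#\D$ grows; the four-digit bound is exactly what makes the case analysis terminate.
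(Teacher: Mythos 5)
The statement you are trying to prove is stated in the paper as a \emph{conjecture}, and the paper offers no proof of it in general: it is only verified for $\#\D\le 4$ (Theorem \ref{main-theorem} and the two- and three-digit theorem in Section 5), with the additional hypothesis $\D\subset\R^{+}\cup\{0\}$. Your proposal cannot close this gap, and indeed it silently specializes to $\#\D=4$ partway through (``the hypothesis $\D\subset\Z^{+}\cup\{0\}$ together with $\#\D=4$ makes the zero structure rigid''), so at best it is an outline for Theorem \ref{main-theorem} rather than for the conjecture. Your own closing paragraph concedes that the general case does not stabilize as $\#\D$ grows; the paper makes the same point concretely with $\D=\{0,1,3,5,6\}$, for which ${\mathcal Z}(M_{\D})$ consists of irrational points and is contained in no lattice $\alpha\Z$, so the step where you deduce ${\mathcal Z}(M_{\D})\subset\alpha\Z$ from orthogonality in order to invoke Theorem \ref{lem8} genuinely fails beyond four digits. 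Even for four digits that inclusion is not a consequence of the spectral hypothesis but of the classification of vanishing sums of four unit complex numbers (Lemma \ref{Lem1.1}, Lemma \ref{lem4.1}), and it only holds once the digits are known to be rational.

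Measured against the paper's actual four-digit argument, your plan is also missing its two load-bearing components. First, you skip the irrational-digit case entirely: Section 3 of the paper must rule out digit sets such as $\{0,1,a,a+1\}$ with $a$ irrational, and does so via the multi-Bernoulli structure of the zero set, the infinite Ramsey theorem (Theorem \ref{Ramsey}, Proposition \ref{prop2.1}), and a strict-inequality estimate on $Q(\xi)$ (Proposition \ref{prop3.8}); none of this is a ``finite case analysis on factorizations.'' Second, the necessity of condition (iii) in the rational case is not obtained by refining a Hadamard triple into a tiling of $\Z_{N^k}$, but by writing $\D=\{0,a,2^{t_1}\ell_1,a+2^{t_2}\ell_2\}$, locating every bi-zero set $\Lambda$ inside specific components ${\mathcal R}_i$, ${\mathcal S}_i$ of the zero set (Lemmas \ref{lem-lam}--\ref{lem-case2}), and then exhibiting an explicit non-zero $F=f\ast{\mathcal X}_{N^{-1}K(N,\D)}\in L^2(\mu)$ orthogonal to all of $E(\Lambda)$, which kills completeness when $t_1\ne t_2$ or $\beta\mid t_1=t_2$. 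Your proposal gestures at ``a Jorgensen--Pedersen type equidistribution argument on the escaping mass of the zeros'' for exactly this completeness step, but supplies no mechanism for it; that is the heart of the proof and it is absent.
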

 
 \medskip
 
 \subsection{Main Results.} This paper aims to study the modified {\L}aba-Wang conjecture is some special case based on the cardinality of  $\D$. When $\#\D = 2$, the self-similar measure is known as the Bernoulli convolution, and a complete answer has been given by Dai \cite{D2012}. When $\#\D = 3$, the case for product-form Hadamard triples will not appear still and it can be classified easily. For the sake of completeness, we will give a full proof based on the theorems we quoted so far for $\#\D = 2$ and $3$ and show that they satisfy the {\L}aba-Wang conjecture in Section 5. The first non-trivial case is $\#\D = 4$. The main body of the paper will be to prove the following theorem which provides the complete classification of self-similar spectral measures when $\#\D = 4$.

 \begin{theorem}\label{main-theorem}
 Let $\D \subset \R^+\cup\{0\}$ and $0\in\D$ be a digit set with $\#\D = 4$, $0<\rho<1$ and ${\bf p}$ be a probability vector. Suppose that the self-similar measure $\mu_{\rho,\D,{\bf p}}$ is a spectral measure. Then
 \begin{enumerate}
     \item ${\bf p}$ is a equal-weighted probability vector.
     \item $\rho  = \frac{1}{N}$ where $N = 2^{\beta} m$ for some integer $\beta\ge 1$ and $m$ is an odd integer. 
     \item By rescaling, $\D = \{0, a, 2^t\ell, a+2^{t}\ell'\}$ where $a,\ell,\ell'$ are odd integers and $t$ is not a multiple of $\beta$. 
 \end{enumerate}
 In particular, if we write $t = \beta k+r$ for some $0<r\le \beta-1$, then $(N, m^k\D,{\mathcal L}\oplus{\mathcal L}_2)$ is a product-form Hadamard triple with ${\mathcal L}_1 = \{0,2\}$ and ${\mathcal L}_2 = \{0, N 2^{r-1}\}.$ Hence, the modified {\L}aba-Wang conjecture holds.  
 \end{theorem}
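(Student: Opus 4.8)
The plan is to separate the three conclusions and feed them into the two structural theorems already quoted. Conclusion (1) is immediate: since $\mu_{\rho,\D,{\bf p}}$ is assumed spectral, Theorem \ref{Theorem_DC} of Deng and Chen forces ${\bf p}$ to be the equal-weight vector, so from here on I work with $\mu_{\rho,\D}$ and its Fourier transform $\widehat{\mu_{\rho,\D}}(\xi)=\prod_{k\ge1}M_\D(\rho^k\xi)$, where $M_\D(\xi)=\frac14\sum_{d\in\D}e^{2\pi i d\xi}$. The whole analysis is driven by the zero set $\mathcal{Z}(\widehat{\mu_{\rho,\D}})=\bigcup_{k\ge1}\rho^{-k}\mathcal{Z}(M_\D)$ together with the elementary fact that four unit-modulus complex numbers sum to zero exactly when they split into two antipodal pairs. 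Applied to $\{1,e^{2\pi i d_1\xi},e^{2\pi i d_2\xi},e^{2\pi i d_3\xi}\}$, this shows that $\mathcal{Z}(M_\D)$ is the union of three families of $\xi$, each pinned by the half-integrality of one digit together with the half-integrality of the complementary difference.

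For conclusion (2) I would first force commensurability of the digits. A spectrum $\Lambda$ is infinite, so $\mathcal{Z}(\widehat{\mu_{\rho,\D}})$, and hence $\mathcal{Z}(M_\D)$ itself, must be infinite. In each antipodal family a zero $\xi$ is pinned by two conditions such as $d_1\xi\in\frac12+\Z$ and $(d_2-d_3)\xi\in\frac12+\Z$; two arithmetic progressions with incommensurable steps meet in at most one point, so infinitude forces the ratios $d_i/(d_j-d_k)$ to be rational, whence all the $d_i$ are commensurable. Rescaling by a suitable $\alpha>0$ I may assume $\D\subset\Z$ with $\gcd(\D)=1$; the antipodal conditions then give $\mathcal{Z}(M_\D)\subset\alpha_0\Z$ for an explicit $\alpha_0$, which is precisely the hypothesis of Theorem \ref{lem8}. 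That theorem yields $\rho^{-1}=N\in\N$ with $N\ge2$, and I write $N=2^\beta m$ with $m$ odd for the $2$-adic bookkeeping to follow.

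The heart of the argument, and the main obstacle, is conclusion (3). Here I would reduce spectrality of $\mu_{N,\D}$ to a tiling-type condition for the integer set $\D$ inside a cyclic group $\Z_{N^k}$: any admissible finite truncation of a spectrum forces the four digits to be distinct and Hadamard-compatible modulo a suitable power of $N$, and completeness of $E(\Lambda)$ forces $\D$ to tile (equivalently, to be spectral in) $\Z_{N^k}$. Since a tiling complement of $\D$ has $N^k/\#\D=N^k/4$ elements, $4\mid N^k$, so $N$ must be even and $\beta\ge1$. Decomposing $\Z_{N^k}$ via the Chinese Remainder Theorem into its $2$-part and its odd part and using that $\#\D=4=2^2$ is a prime power, I would localize the entire obstruction at the prime $2$ and show the four digits split as two even and two odd, with the two odd digits sharing a common odd offset $a$; reading off the $2$-adic valuations of the two even contributions then produces the form $\D=\{0,a,2^t\ell,a+2^t\ell'\}$ with $a,\ell,\ell'$ odd. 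The delicate point is that $t$ must not be a multiple of $\beta$: if $t=\beta k$ then $m^k\,2^t\ell=N^k\ell$ aligns the even digit exactly with the scale $N^k$, the four digits collapse modulo the relevant power of $2$, and the resulting overlap destroys the distinctness and completeness needed for an orthonormal basis. Showing rigorously that this degeneracy genuinely obstructs spectrality --- rather than merely producing a different (for instance absolutely continuous, Dutkay--Jorgensen-type) spectral measure --- is where the real work lies.

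Finally, for the ``in particular'' assertion I would run the construction in reverse. With $t=\beta k+r$, $0<r\le\beta-1$, I would exhibit the decomposition $m^k\D=\bigcup_{a\in\A}(a+N\B_a)$ explicitly, taking $\A$ to be the two residues modulo $N$ and $\B_a$ the two rescaled even contributions, and then verify conditions (i) and (ii) of Definition \ref{definition-product-form} for the stated spectra $\mathcal{L}_1=\{0,2\}$ and $\mathcal{L}_2=\{0,N2^{r-1}\}$ by a direct computation of the associated Hadamard matrices. Invoking the main theorem of \cite{AL2022}, that product-form Hadamard triples generate spectral measures, then confirms these classified measures are indeed spectral, closing the loop and showing the modified {\L}aba--Wang conjecture holds for $\#\D=4$.
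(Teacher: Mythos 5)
There are two genuine gaps, and both occur exactly where the paper has to work hardest. First, your derivation of conclusion (ii) rests on the claim that infinitude of $\mathcal{Z}(M_{\D})$ forces all the ratios $d_i/(d_j-d_k)$ to be rational ``whence all the $d_i$ are commensurable.'' This is false: infinitude of the zero set only forces \emph{one} of the three antipodal families to be infinite, i.e.\ one ratio to be rational of odd/odd form (Lemma \ref{Lem1.1}), and even rationality of two ratios does not make the digits commensurable --- the set $\D=\{0,1,a,a+1\}$ with $a$ irrational has two rational ratios and the infinite zero set $\frac12{\mathbb O}\cup\frac1{2a}{\mathbb O}$. Ruling out irrational digits is the entire content of Section 3 of the paper (Theorem \ref{theorem irr}), and it is not soft: Case (1) needs the strict-inequality argument on $Q(\xi)$ in Proposition \ref{prop3.8}, and Case (2) needs the infinite Ramsey theorem together with the minimal-polynomial fact \eqref{alg fact} to show that any infinite bi-zero set is already a bi-zero set of a proper convolution factor (Propositions \ref{prop2.2} and \ref{prop2.3}). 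None of this is replaced by your arithmetic-progression observation, and without it you cannot even invoke Theorem \ref{lem8}, whose hypothesis $\mathcal{Z}(M_\D)\subset\alpha\Z$ fails for irrational digits.

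Second, for conclusion (iii) you propose to reduce spectrality of $\mu_{N,\D}$ to ``$\D$ tiles (equivalently, is spectral in) $\Z_{N^k}$.'' No such reduction is available: that completeness of $E(\Lambda)$ forces a tiling/Hadamard condition on the digits modulo a power of $N$ is essentially the modified {\L}aba--Wang conjecture itself, i.e.\ the statement being proved, and the parenthetical equivalence of tiling and spectrality in cyclic groups is Fuglede's conjecture, which the paper deliberately avoids. You acknowledge that showing the degeneracy at $t=\beta k$ ``genuinely obstructs spectrality'' is ``where the real work lies,'' but that work is precisely the paper's proof of the necessity in Theorem \ref{main theo}: one first pins down how any bi-zero set must distribute among the components ${\mathcal R}_i$, ${\mathcal S}_i$ of ${\mathcal Z}_1\cup{\mathcal Z}_{r+1}$ via the $2$-adic parity lemmas (Lemmas \ref{lem-lam}, \ref{lem-lam2}, \ref{lem-lam3}, \ref{lem-case2}), and then exhibits an explicit nonzero $F=f\ast{\mathcal X}_{N^{-1}K(N,\D)}$ (or $f\ast{\mathcal X}_{\widetilde K}$) in $L^2(\mu)$ orthogonal to every $e_\lambda$, so that no bi-zero set is complete. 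Your outline contains no substitute for either ingredient, so the cases $t_1\neq t_2$ and $\beta\mid t$ are not actually excluded. The treatment of (i) via Theorem \ref{Theorem_DC} and of the final ``in particular'' clause via the product-form construction of \cite{AL2022} does match the paper and is fine.
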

 
\noindent{\bf Outline of the Proof of Theorem \ref{main-theorem}.} The proof of Theorem \ref{main-theorem} will be divided into several steps which will be completed in several sections. The proof will be following the outline as below and readers can refer to the relevant sections for the detailed proofs:
 \begin{enumerate}
     \item[(1)]  Statement (i) has been known by Deng and Chen Theorem (Theorem \ref{Theorem_DC}). We will assume ${\bf p}$ is equal-weighted and write the measure as $\mu_{\rho,\D}$.
     \item[(2)] In Section 3, we will prove that $\D$ must be rational.  To do that, by translation and rescaling, we will write $\D = \{0,1,a,b\}$ where $1<a<b$. In Theorem \ref{theorem irr}, we will prove that if $\mu_{\rho,\D}$ is spectral. Then $a,b$ must be  rational numbers.
     \item[(3)] Starting from Section 4, we assume that digits in $\D$ are all rational. Hence, by rescaling, we will write $\D = \{0,a,b,c\}$ where $a<b<c$ are integers. By studying the zero set, we can rewrite 
     $$
     \D = \{0,a,2^{t_1}\ell,a+2^{t_2}\ell'\},
     $$
     where $a,\ell,\ell'$ are odd integers and $t_1,t_2\ge 1$. 
     In Subsection \ref{subsection 4.2}, we will prove that (ii) holds in Lemma \ref{main-lem1}. Then we will prove that  (iii) holds by showing that for $\mu_{\rho,\D}$ to be spectral, $t_1 = t_2$ and they are not divisible by $\beta$ are necessary (Theorem \ref{main theo}). 
     \item [(4)]The last part of the theorem were proven in \cite[Theorem 1.11]{AL2022}. For the sake of completenees, we provide the proof in Subsection \ref{subsection 4.1}
    \end{enumerate}

\subsection{Notation.} Let us setup the main notations we will use in this paper. Since we will very often use the set of all odd integers, we denote it as ${\mathbb O}$.  Given a Borel probability measure $\mu$,
 the Fourier transform is defined to be
 $$
 \widehat{\mu}(\xi) = \int e^{-2\pi i \xi x}d\mu(x).
 $$
 The zero set of $\widehat{\mu}$ will be denoted by ${\mathcal Z}(\widehat{\mu})$ and the same notation ${\mathcal Z}(f)$ for the zero set of a continuous function $f$. $\mu\ast \nu$ will denote the convolution of $\mu$ and $\nu$. We will also write 
  $$
 \Asterisk_{i=1}^r \mu_i = \mu_1\ast\mu_2\ast...\ast\mu_r
 $$
 to denote convolutions of $r$ measures. For a given finite digit set $\D$, the $\delta_{\D}$ denotes the equal-weighted Dirac measure supported on $\D$ {\it mask function} is the Fourier transform of $\delta_{\D}$, i.e. 
$$
M_{\D}(\xi) = \frac1{\#\D}\sum_{d\in\D} e^{-2\pi i d\xi}.
$$
For self-similar measures $\mu_{\rho,\D}$, we know that we can write 
 \begin{equation}\label{mu_conv}
 \mu_{\rho,\D} = \Asterisk_{i=1}^{\infty} \delta_{\rho^i\D}: = \mu_n\ast \mu_{>n}    
 \end{equation}
 where $\mu_n$ is the convolution of the first $n$ discrete measures and $\mu_{>n}$ is the rest of the convolutions
 
 \medskip
 
 Concerning the orthogonal sets, we will always assume $0\in\Lambda$.  $E(\Lambda)$ is an orthogonal set for $L^2(\mu)$ if and only if
\begin{equation}\label{bizero}
(\Lambda-\Lambda)\setminus\{0\}\subset {\mathcal Z}(\widehat{\mu}) := \{\xi\in{\Bbb R}: \widehat{\mu}(\xi)=0\}.
\end{equation}
We call the set $\Lambda$ satisfying (\ref{bizero}) a {\it bi-zero set} of $\mu$. As $0$ is in any bi-zero set, so that $\Lambda\subset\Lambda-\Lambda$. 
 Given a countable set $\Lambda$ and a Borel probability measure $\mu$, let 
$$
Q(\xi):=\sum_{\lambda\in\Lambda}|\widehat{\mu}(\xi+\lambda)|^2.
$$
The Jorgensen-Pedersen lemma \cite{JP1998} implies that $\Lambda$ is a spectrum for $\mu$ if and only if $Q(\xi) = 1$ for all $\xi\in\R$. $\Lambda$ is a bi-zero set of $\mu$ if and only if $Q(\xi)\le1$ for $\xi\in\R$. This fact will be used throughout the paper.

\section{Preliminaries}

\subsection{Basic Setup} In this section, we will set up some preliminary observation that reduces the problem. Let $0<\rho<1$. Since rescaling and translation will not affect the spectrality of the self-similar measure, namely $\mu_{\rho,\D}$ is spectral if and only if $\mu_{\rho,\alpha\D+t}$ is spectral for all $\alpha\ne 0$ and $t\in\R$,  we can assume that our digit  set has the following form
 $$
 \D = \{0,1,a,b\}, \ 1<a<b.
 $$
Then the mask function of $\D$ is given by 
$$
M_{\mathcal D}(\xi) = \frac14(1+e^{-2\pi i \xi}+e^{-2\pi i a\xi}+e^{-2\pi i b\xi}).
$$
A basic but fundamental fact about vanishing sum of complex numbers of modulus one is that given $z_1,z_2,z_3\in\C$ such that $|z_1| = |z_2| = |z_3| = 1$.  
$$
1+ z_1+z_2+z_3 = 0 
$$
if and only if one of complex numbers is $-1$ and the others are opposite sign of each other, e.g. $z_1 = -1$ and $z_2 +z_3 = 0$ (or permuting the indices 1,2,3). This can be proved by directly solving the complex equation. This fact leads us to the following lemma.  

\medskip

\begin{Lem}\label{Lem1.1}
Consider the digit set ${\mathcal D}$ as above. Then the zero set ${\mathcal Z}(M_{\mathcal D})\neq\emptyset$ if and only if at least one of the following ratios
$$
\frac{1}{b-a}, \ \frac{a}{b-1}, \ \frac{b}{a-1}
$$
is a rational number of the form $\frac{p}{q}$ with $p,q$ are odd numbers.
\end{Lem}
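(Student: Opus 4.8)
The plan is to reduce the vanishing of $M_{\mathcal D}$ to the elementary fact about sums of four unit-modulus complex numbers recalled just above the statement, and then to convert each resulting case into a congruence condition on $\xi$. Writing $z_1 = e^{-2\pi i \xi}$, $z_2 = e^{-2\pi i a\xi}$, $z_3 = e^{-2\pi i b\xi}$, each has modulus one and $M_{\mathcal D}(\xi)=0$ is exactly $1+z_1+z_2+z_3 = 0$. By the recalled fact, this holds if and only if one of the three symmetric configurations occurs: (A) $z_1 = -1$ and $z_2 = -z_3$; (B) $z_2 = -1$ and $z_1 = -z_3$; (C) $z_3 = -1$ and $z_1 = -z_2$. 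Thus $\mathcal Z(M_{\mathcal D}) \neq \emptyset$ if and only if at least one of these configurations is realizable by some real $\xi$, and I would treat the three cases in parallel.

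Next I would translate each configuration into two simultaneous conditions of the shape $u\xi \in \tfrac12 + \Z$. Since $e^{-2\pi i \theta} = -1$ is equivalent to $\theta \in \tfrac12 + \Z$, in Case (A) the conditions become $\xi \in \tfrac12 + \Z$ and $(b-a)\xi \in \tfrac12 + \Z$; in Case (B), $a\xi \in \tfrac12+\Z$ and $(b-1)\xi\in\tfrac12+\Z$; and in Case (C), $b\xi\in\tfrac12+\Z$ and $(a-1)\xi\in\tfrac12+\Z$. In every case we arrive at a pair $u\xi,\, v\xi \in \tfrac12+\Z$ with $(u,v)$ equal to $(1,b-a)$, $(a,b-1)$, or $(b,a-1)$ respectively, so that the relevant ratio $u/v$ is precisely $\tfrac1{b-a}$, $\tfrac{a}{b-1}$, or $\tfrac{b}{a-1}$.

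The heart of the argument is then the following elementary claim, which I would isolate and prove once and apply three times: for fixed nonzero reals $u,v$, there exists a real $\xi$ with $u\xi,\, v\xi \in \tfrac12+\Z$ if and only if $u/v$ is a ratio of two odd integers. For necessity, if $u\xi = \tfrac{2k+1}{2}$ and $v\xi = \tfrac{2m+1}{2}$ then $\xi\neq 0$ and dividing gives $u/v = (2k+1)/(2m+1)$. For sufficiency, if $u/v = p/q$ with $p,q$ odd, then $uq = vp$, so $\xi := \tfrac{p}{2u} = \tfrac{q}{2v}$ satisfies $u\xi = \tfrac p2$ and $v\xi = \tfrac q2$, both in $\tfrac12+\Z$. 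Note that the property ``ratio of two odd integers'' is invariant under reciprocation, as it says exactly that the rational number has odd numerator and odd denominator in lowest terms; hence the direction in which the ratio is written is immaterial. Combining the claim with the case analysis yields that $\mathcal Z(M_{\mathcal D})\neq\emptyset$ precisely when one of $\tfrac1{b-a},\, \tfrac{a}{b-1},\, \tfrac{b}{a-1}$ is of the form $p/q$ with $p,q$ odd, as asserted.

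The only point requiring care, and the step I expect to be the main obstacle, is the sufficiency direction of the claim, where one must exhibit an honest real $\xi$ simultaneously satisfying both half-integer conditions; the construction $\xi = p/(2u) = q/(2v)$ works precisely because rewriting the ratio with an odd/odd representation forces the two candidate values of $\xi$ to coincide. I would also emphasize that no integrality of $a,b$ is needed anywhere: the characterization is a statement about the real ratios, and $\xi$ itself need not be rational, being forced to be a half-integer only in Case (A).
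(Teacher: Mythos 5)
Your proof is correct and follows essentially the same route as the paper's: reduce $M_{\mathcal D}(\xi)=0$ to the three configurations of the vanishing four-term unit sum, then note that each resulting pair of conditions $u\xi,\,v\xi\in\tfrac12+\Z$ is solvable in $\xi$ exactly when $u/v$ is a ratio of two odd integers. The paper compresses your isolated claim into the single line $\tfrac12(2k+1)=\tfrac{1}{2(b-a)}(2\ell+1)$, but the content is identical.
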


\medskip

\begin{proof}
Note that $M_{\mathcal D}(\xi)=0$ if and only if one of the exponentials is $-1$ and the remaining two is of opposite sign to each other:
\begin{equation}\label{eq3equations}
\left\{
  \begin{array}{ll}
    e^{2\pi i \xi} = -1,  \\
    e^{2\pi i (b-a)\xi} =-1, &
  \end{array}
\right. \
\left\{
  \begin{array}{ll}
    e^{2\pi i a\xi} = -1,  \\
    e^{2\pi i (b-1)\xi} =-1, &
  \end{array}
\right. \
\left\{
  \begin{array}{ll}
    e^{2\pi i b\xi} = -1,  \\
    e^{2\pi i (a-1)\xi} =-1. &
  \end{array}
\right.
\end{equation}
In the first case, there are solutions for $\xi$ if and only if  $ \frac12(2k+1)=\frac{1}{2(b-a)}(2\ell+1)$ which implies $\frac{1}{b-a}$ is rational of the  desired form. The same argument applies for the other two cases.
\end{proof}

We now use the the digit set $\D$ to generate the equal-weighted self-similar measure, denoted by $\mu_{\rho, \D}$ with $0<\rho<1$. The self-similar measure can be written as 
$$
\mu_{\rho,\D} = \delta_{\rho\D}\ast\delta_{\rho^2\D}\ast....
$$
Hence, the Fourier transform of $\mu_{\rho,\D}$ is given by
$$
\widehat{\mu}(\xi) = \prod_{k=1}^{\infty}M_{\D}(\rho^k\xi).
$$
Thus, we have 
\begin{equation}\label{EQ_Z}
{\mathcal Z}(\widehat{\mu}) = \bigcup_{k=1}^{\infty}\rho^{-k}{\mathcal Z}(M_{\mathcal D}).
\end{equation}

\begin{Lem}\label{Lem1.2}
With respect to the three situations in Lemma \ref{Lem1.1}, we have
\medskip

(i) If $\frac{1}{b-a} = \frac{p}{q}$, with $p,q$ are odd, then
$${\mathcal Z}(M_{\mathcal D}) \supseteq \frac{p}{2}{\mathbb O}, \ {\mathcal Z}(\widehat{\mu}_{\rho, \D})\supseteq\bigcup_{k=1}^{\infty}\frac{\rho^{-k}p}{2}{\mathbb O}.
$$

\medskip

(ii) If $\frac{a}{b-1} = \frac{p}{q}$, with $p,q$ are odd,
then
$${\mathcal Z}(M_{\mathcal D}) \supseteq \frac{p}{2a}{\mathbb O}, \ {\mathcal Z}(\widehat{\mu}_{\rho, \D})\supseteq\bigcup_{k=1}^{\infty}\frac{\rho^{-k}p}{2a}{\mathbb O}.
$$

\medskip

(iii) If  $\frac{b}{a-1} = \frac{p}{q}$, with $p,q$ are odd, then
$${\mathcal Z}(M_{\mathcal D}) {\supseteq \frac{p}{2b}{\mathbb O}}, \ {\mathcal Z}(\widehat{\mu}_{\rho, \D})\supseteq{\bigcup_{k=1}^{\infty}\frac{\rho^{-k}p}{2b}{\mathbb O}}.
$$

\medskip

Suppose furthermore that only one of the three ratios $\frac{1}{b-a},\frac{a}{b-1}, \frac{b}{a-1}$ is rational. Then  $\supseteq$ can be replaced by $=$ for that case.
\end{Lem}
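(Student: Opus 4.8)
The plan is to treat the three inclusions by direct substitution and then upgrade to equality by a short number-theoretic argument. For part (i) I would fix an arbitrary element $\xi=\frac p2 n$ with $n\in{\mathbb O}$ and check the first vanishing pattern of \eqref{eq3equations}: since $b-a=q/p$, one gets $e^{2\pi i\xi}=(-1)^{pn}=-1$ and $e^{2\pi i(b-a)\xi}=e^{\pi i qn}=(-1)^{qn}=-1$, all three of $p,q,n$ being odd, so $M_{\mathcal D}(\xi)=0$ by Lemma \ref{Lem1.1}. Parts (ii) and (iii) are the same computation after relabeling: for (ii) I take $\xi=\frac{p}{2a}n$ and use $b-1=aq/p$ to verify $e^{2\pi i a\xi}=-1$ and $e^{2\pi i (b-1)\xi}=-1$; for (iii) I take $\xi=\frac{p}{2b}n$ and use $a-1=bq/p$. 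Each reduces to a one-line parity check. The matching inclusions for ${\mathcal Z}(\widehat\mu_{\rho,\D})$ are then immediate from the identity \eqref{EQ_Z}, since dilating the relevant half-odd-integer set by each $\rho^{-k}$ and taking the union reproduces the claimed sets verbatim.

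For the final equality, assume without loss of generality that only $\frac{1}{b-a}$ is rational, the other two cases being symmetric, and write $\frac{1}{b-a}=\frac pq$ in lowest terms, so that $p,q$ are odd and coprime. By Lemma \ref{Lem1.1} and its proof, a solution of the second or third system of \eqref{eq3equations} would force $\frac{a}{b-1}$ or $\frac{b}{a-1}$ to be rational of odd-over-odd type; under the hypothesis this cannot happen, so every zero $\xi$ of $M_{\mathcal D}$ must satisfy the first system, namely $e^{2\pi i\xi}=-1$ and $e^{2\pi i (b-a)\xi}=-1$. The first condition gives $\xi=\frac{2k+1}{2}$ for some integer $k$, and the second then demands that $\frac{q(2k+1)}{p}$ be an odd integer. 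Since $\gcd(p,q)=1$ with $p$ odd, the divisibility $p\mid q(2k+1)$ forces $p\mid(2k+1)$, hence $2k+1=pt$ with $t$ necessarily odd, i.e.\ $\xi=\frac p2 t\in\frac p2{\mathbb O}$. Together with the inclusion already shown this yields ${\mathcal Z}(M_{\mathcal D})=\frac p2{\mathbb O}$, and \eqref{EQ_Z} promotes this to the corresponding equality for ${\mathcal Z}(\widehat\mu_{\rho,\D})$.

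The one point that genuinely needs care, and the only place where the argument can fail if handled carelessly, is the coprimality reduction in the equality: the asserted equality is false for a non-reduced representation (for instance $\frac{p}{q}=\frac{3}{3}$ gives $\frac 32{\mathbb O}\subsetneq\frac 12{\mathbb O}={\mathcal Z}(M_{\mathcal D})$), so I would state the lowest-terms convention explicitly and observe that reducing $\frac pq$ preserves the oddness of both numerator and denominator, because any common divisor of two odd integers is odd. Beyond this, the whole lemma is a sequence of parity computations together with the set identity \eqref{EQ_Z}, so I expect no further obstacle.
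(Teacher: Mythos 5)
Your proposal is correct and follows essentially the same route as the paper: a direct parity check of the relevant vanishing pattern in \eqref{eq3equations} for the inclusions, the identity \eqref{EQ_Z} for the statements about $\widehat{\mu}_{\rho,\D}$, and a coprimality/divisibility argument to upgrade to equality when only one ratio is rational. Your explicit remark that the equality requires $\frac{p}{q}$ to be in lowest terms (with reduction preserving oddness) is a welcome clarification of a convention the paper uses only implicitly in its phrase ``as $p,q$ are co-prime.''
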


\begin{proof}

We only prove the first case. The others are similar. Note that in the first case we can write
$$
{M_{\D}}(\xi) = \frac14\left(1+e^{2\pi i \xi}+e^{2\pi i a\xi}(e^{2\pi i (b-a)\xi}+1)\right)=  \frac14\left(1+e^{2\pi i \xi}+e^{2\pi i a\xi}(e^{2\pi i \frac{q}{p}\xi}+1)\right).
$$
As $p,q$ are odd, it is direct to check that $M_{\D}\left(\frac{p}{2}(2k+1)\right)=0$ for any integer $k$. Hence, we have ${\mathcal Z}(M_{\mathcal D}) \supseteq \frac{p}{2}{\mathbb O}$. The second inclusion follows from (\ref{EQ_Z}).

\medskip

We now prove the last statement. Suppose now that only $\frac{1}{b-a}$  is rational. Then the only possibility for  $\xi\in{\mathcal Z}(M_{\D})$ is to satisfy the first equation in (\ref{eq3equations}). i.e.  $\left\{
  \begin{array}{ll}
    e^{2\pi i \xi} = -1,  \\
    e^{2\pi i q/p\xi} =-1, &
  \end{array}
\right. \
$
Hence, $\xi= \frac{1}{2}(2m+1) = \frac{p}{2q}(2n+1)$. This implies that $p(2n+1) = q(2m+1)$. As $p,q$ are co-prime, $q$ divides $2n+1$ and thus we can write $\xi = \frac{p}{2}(2n'+1)$ for some integer $n'$. This establishes the opposite inclusion.
\end{proof}

\medskip

\subsection{Zero sets of multi-Bernoulli structure.} We will be dealing with zero set of $\widehat{\mu}$ having structure  resembles the zero set of the Bernoulli convolution. We say that a measure $\mu$ has a {\it zero set of multi-Bernoulli structure} if
$$
{\mathcal Z}(\widehat{\mu})= \bigcup_{i=1}^{s}\alpha_i\left\{\frac{\rho^{-k}a}{2}: a\in{\mathbb O}, k\geq1\right\}
$$
for some $0<\rho<1$ and distinct $\alpha_i>0$.
We say that $\mu$ has a {\it zero set of Bernoulli structure} if $s=1$, which means the zero set of $\mu$ is a multiple of a zero set of Bernoulli convolutions of contraction ratio $\rho$.
Extracting the main conclusion of \cite{HL2008}, we have

\medskip

\begin{theorem}[Hu and Lau]\label{lem7}
Suppose that $\mu$ has a zero set of Bernoulli structure for some prime integer $p$. Then it has an infinite  bi-zero set if and only if $\rho=\left(\frac{n}{m}\right)^{1/r}$ where $r\geq1$, $m$ is even  and $n$ is odd.
\end{theorem}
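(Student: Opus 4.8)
The plan is to prove the two implications separately, after a harmless normalization. Since the hypothesis gives a zero set of Bernoulli structure with $s=1$, I first rescale so that the single constant $\alpha$ equals $1$; bi-zero sets transform covariantly under scaling of $\mu$, so this costs nothing, and I work with
$$
\mathcal Z(\widehat{\mu})=\Big\{\tfrac{1}{2}\rho^{-k}a:\ a\in{\mathbb O},\ k\ge 1\Big\}.
$$
The definition fixes the prime $p=2$ (odd numerators, denominator $2$); for a general prime $p$ the argument runs verbatim with the $p$-adic valuation and the conclusion $p\mid m$, $p\nmid n$.

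For the sufficiency ($\Leftarrow$), suppose $\rho^{r}=n/m$ with $\gcd(n,m)=1$, $m$ even and $n$ odd. I would use only the zeros sitting at levels divisible by $r$ and propose the explicit candidate
$$
\Lambda=\frac{m}{2}\Big\{\sum_{j=0}^{J-1}\epsilon_j m^{j}:\ J\ge 1,\ \epsilon_j\in\{0,1\}\Big\}.
$$
Given two distinct elements, let $j_0$ be the smallest index at which their digit strings differ; factoring out $m^{j_0}$ writes their difference as $\tfrac{1}{2}m^{j_0+1}O$, where $O=d_{j_0}+\sum_{j>j_0}d_j m^{j-j_0}$ is odd precisely because $m$ is even (the leading term $d_{j_0}=\pm1$ is odd and every higher term carries a factor $m$). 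Since $\tfrac{1}{2}m^{j_0+1}O=\tfrac{1}{2}\rho^{-r(j_0+1)}\,(O\,n^{j_0+1})$ and $O\,n^{j_0+1}\in{\mathbb O}$, the difference lies in $\mathcal Z(\widehat{\mu})$ at level $k=r(j_0+1)$, so $\Lambda$ is an infinite bi-zero set. The two features that make this work are that $m$ even forces the cofactor $O$ to be odd, and that the odd numerator $a$ in $\mathcal Z(\widehat{\mu})$ is free, so it absorbs the denominator $n^{j_0+1}$.

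For the necessity ($\Rightarrow$), assume an infinite bi-zero set $\Lambda\ni 0$ and aim for $\rho=(n/m)^{1/r}$ with $m$ even, $n$ odd. I would first show that some power of $\rho$ is rational. As $\Lambda\setminus\{0\}\subset\mathcal Z(\widehat{\mu})$, each $\lambda$ lies on a single line $\mathbb Q\,\rho^{-k_\lambda}$. If $\rho$ were transcendental the powers $\rho^{-k}$ would be $\mathbb Q$-linearly independent; then two elements sharing a level would yield $\rho^{-k}\cdot(\text{integer})=\tfrac{1}{2}\rho^{-k'}a'$ with $a'$ odd, impossible, while two elements with $k_\lambda\ne k_{\lambda'}$ would give a difference with two independent nonzero coordinates yet forced onto one line $\mathbb Q\,\rho^{-k''}$, again impossible. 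Hence $|\Lambda|\le 2$, a contradiction, so $\rho$ is algebraic; refining this linear-algebra bookkeeping upgrades it to $\rho^{r_0}\in\mathbb Q$ for some $r_0\ge 1$. Writing $\rho^{r_0}=n/m$ in lowest terms, the parities are then pinned down by the $2$-adic valuation $v_2$ of the zeros: a zero at level $r_0 j$ has $v_2=j\,v_2(m)-j\,v_2(n)-1$.

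The decisive step, and the main obstacle, is the resulting parity dichotomy. If $m,n$ are both odd then every zero has $v_2=-1$, so all pairwise differences of $\Lambda$ have $v_2=-1$; by the ultrametric inequality their leading $2$-adic digits would all coincide, which three distinct points cannot achieve, capping $|\Lambda|$ at $2$. The case $m$ even is exactly the one realized by the construction above. The genuinely delicate case is $n$ even (so $m$ odd), where $v_2$ of the zeros is bounded above but not constant, so the easy constant-valuation obstruction fails; indeed one can exhibit orthogonal triples, e.g. $\{0,\,9/4,\,9/8\}$ for $\rho=2/3$, and ruling out an \emph{infinite} orthogonal set therefore requires a quantitative bound on the size of every bi-zero set. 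I would obtain such a bound by organizing the zeros by level into a tree and observing that along any chain of points of $\Lambda$ each passage to a higher level multiplies the $2$-adic denominator by a fixed factor, while simultaneously each odd prime dividing $m$ must divide the difference; balancing these $2$-adic and $p$-adic constraints against the archimedean spread forces the tree to have bounded depth, hence $\Lambda$ finite. This balancing is the heart of the Hu--Lau analysis in \cite{HL2008}. A secondary technical point is that when $\rho$ is irrational but $\rho^{r_0}$ is rational, the zeros at levels not divisible by $r_0$ are irrational, so the valuation arguments must be confined to the rational sublattice of zeros (or justified by first showing $\Lambda$ may be taken inside it); making this reduction precise is the other place where the proof needs care.
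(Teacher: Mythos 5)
First, a point of comparison: the paper does not prove this statement at all --- it is quoted verbatim as the main conclusion of Hu and Lau \cite{HL2008} and used as a black box --- so there is no internal argument to measure you against; your attempt has to stand on its own. On that footing, your sufficiency direction is complete and correct: the set $\Lambda=\tfrac{m}{2}\bigl\{\sum_{j=0}^{J-1}\epsilon_j m^{j}\bigr\}$ is a genuine infinite bi-zero set, the factorization of a difference as $\tfrac12 m^{j_0+1}O$ with $O$ odd uses exactly the evenness of $m$, and the identification $\tfrac12 m^{j_0+1}O=\tfrac12\rho^{-r(j_0+1)}(On^{j_0+1})$ with $On^{j_0+1}$ odd places it in the zero set. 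This is essentially the standard construction and I have no complaints there.

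The necessity direction, however, contains two genuine gaps. First, your Ramsey-free linear-algebra argument only rules out $\rho$ transcendental; the promotion to ``$\rho^{r_0}\in\mathbb{Q}$, i.e.\ the minimal polynomial of $\rho$ is $x^{r_0}-n/m$'' is asserted (``refining this bookkeeping upgrades it''), but an algebraic $\rho$ of degree $d$ satisfies many integral trinomial relations of the form $a_1x^{k-k_1}-a_2x^{k-k_2}-a_3x^{k-k_3}=0$ coming from orthogonality, and extracting from these that the minimal polynomial is a binomial is a real step, not bookkeeping. Second and more seriously, the case $\rho^{r_0}=n/m$ with $n$ even and $m$ odd is exactly where the theorem is hard: your own example $\{0,9/4,9/8\}$ for $\rho=2/3$ shows the cheap $2$-adic valuation obstruction fails, and what you offer in its place (``organizing the zeros into a tree \ldots balancing $2$-adic and $p$-adic constraints against the archimedean spread'') is a description of a strategy, not an argument --- you say yourself that this is ``the heart of the Hu--Lau analysis.'' Deferring the decisive case to the very reference being proved means the necessity direction is not established. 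The secondary issue you flag (confining valuations to the rational sublattice of zeros when $r_0>1$) is also left unresolved. In short: sufficiency proved, necessity reduced to the two places where the original paper of Hu and Lau does its actual work.
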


\medskip


When dealing with measures with convolution structure, the following simple lemma will often be used. We omit the proof and reader can see \cite[Lemma 2.2]{DHL2014} for a proof.

\begin{Lem}\label{lem5}
Let $\mu=\mu_0\ast...\ast\mu_k$ be the convolution of the probability measures $\mu_i, i=0,1,...,k $, and suppose that none of the  $\mu_i$ is a Dirac measure of one point. Suppose that $\Lambda$ is a bi-zero set of the measure $\mu_i$ for some $i$, then $\Lambda$ is also a bi-zero set of the measure $\mu$ but it is not a spectrum for $\mu$.
\end{Lem}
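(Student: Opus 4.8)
The plan is to prove the two assertions separately: the bi-zero property follows at once from multiplicativity of the Fourier transform, while the failure to be a spectrum requires a rigidity argument on the Jorgensen--Pedersen function $Q$. Throughout I use that $\mu = \mu_0 \ast \cdots \ast \mu_k$ gives the factorization $\widehat{\mu}(\xi) = \prod_{j=0}^{k} \widehat{\mu_j}(\xi)$, and that each $\widehat{\mu_j}$ is continuous with $|\widehat{\mu_j}| \le 1$ and $\widehat{\mu_j}(0) = 1$.

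For the first assertion, the factorization yields $\mathcal{Z}(\widehat{\mu_i}) \subseteq \mathcal{Z}(\widehat{\mu})$. Since $\Lambda$ is a bi-zero set of $\mu_i$, the characterization (\ref{bizero}) gives $(\Lambda - \Lambda)\setminus\{0\} \subset \mathcal{Z}(\widehat{\mu_i}) \subseteq \mathcal{Z}(\widehat{\mu})$, so $\Lambda$ is a bi-zero set of $\mu$. For the second assertion I would set $\mu_{\ne i} = \Asterisk_{j\ne i}\mu_j$, a probability measure, so that $|\widehat{\mu}(\xi)|^2 = |\widehat{\mu_i}(\xi)|^2\,|\widehat{\mu_{\ne i}}(\xi)|^2$ with $|\widehat{\mu_{\ne i}}| \le 1$ everywhere. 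Writing $Q_\mu(\xi) = \sum_{\lambda\in\Lambda}|\widehat{\mu}(\xi+\lambda)|^2$ and likewise $Q_{\mu_i}$, the termwise bound $|\widehat{\mu}(\xi+\lambda)|^2 \le |\widehat{\mu_i}(\xi+\lambda)|^2$ gives $Q_\mu(\xi) \le Q_{\mu_i}(\xi) \le 1$ for all $\xi$, the last inequality because $\Lambda$ is a bi-zero set of $\mu_i$. Arguing by contradiction, I assume $\Lambda$ is a spectrum for $\mu$; then the Jorgensen--Pedersen lemma forces $Q_\mu \equiv 1$, hence $Q_{\mu_i}\equiv 1$, and the inequality above becomes an equality for every $\xi$.

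The crux is the rigidity step. For each fixed $\xi$, the nonnegative terms of $Q_\mu(\xi)$ are dominated termwise by those of $Q_{\mu_i}(\xi)$, and the two convergent sums agree; therefore every term must match, i.e. $|\widehat{\mu}(\xi+\lambda)|^2 = |\widehat{\mu_i}(\xi+\lambda)|^2$ for all $\lambda\in\Lambda$ and all $\xi$. Specializing to $\lambda = 0$ gives $|\widehat{\mu_i}(\xi)|^2\big(1 - |\widehat{\mu_{\ne i}}(\xi)|^2\big) = 0$ for all $\xi$. Since $\widehat{\mu_i}$ is continuous with $\widehat{\mu_i}(0) = 1$, it is nonzero on some interval $(-\delta,\delta)$, forcing $|\widehat{\mu_{\ne i}}(\xi)| = 1$ there. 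I would then invoke the standard fact that a probability measure whose Fourier transform has modulus $1$ on a nonempty open interval must be a single Dirac mass: integrating the identity $1 - |\widehat{\mu_{\ne i}}(\xi)|^2 = \iint \big(1 - \cos 2\pi \xi(x-y)\big)\,d\mu_{\ne i}(x)\,d\mu_{\ne i}(y) \ge 0$ over $(-\delta,\delta)$ and applying Tonelli forces $\mu_{\ne i}\times\mu_{\ne i}$ to be concentrated on the diagonal, whence $\mu_{\ne i}$ is a point mass. Equivalently, $\prod_{j\ne i}|\widehat{\mu_j}| = 1$ on $(-\delta,\delta)$ with each factor $\le 1$ forces $|\widehat{\mu_j}| = 1$ there, so each $\mu_j$ is a one-point Dirac measure (and there is at least one such $j\ne i$). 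This contradicts the hypothesis that none of the $\mu_j$ is a single point mass, completing the proof.

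The main obstacle is precisely this rigidity passage: upgrading the \emph{equality of sums} $Q_\mu \equiv Q_{\mu_i}$ to a pointwise identity, and then extracting from $|\widehat{\mu_{\ne i}}| \equiv 1$ on an interval that the underlying measure degenerates to a point. Everything else—the factorization, the $Q$-inequality, and the contradiction bookkeeping—is routine once this rigidity input is in hand.
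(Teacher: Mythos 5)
Your proof is correct. Note that the paper itself omits the proof of this lemma and simply cites \cite[Lemma 2.2]{DHL2014}; your argument --- bounding $Q_\mu$ by $Q_{\mu_i}$ termwise, forcing termwise equality when $Q_\mu\equiv 1$, and then using $|\widehat{\mu_{\ne i}}|\equiv 1$ near the origin to conclude that the complementary convolution factor is a Dirac mass --- is precisely the standard Jorgensen--Pedersen rigidity argument used in that reference, so it matches the intended proof.
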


We now consider measures $\mu$ with zero set of multi-Bernoulli structure.  Proposition \ref{prop2.1} below will allow us to focus on $\rho $ being a root of fractions. The proof require a version of infinite Ramsey theorem about coloring of graphs. One can refer to \cite[Theorem A]{R1929} for details. Suppose that $S$ is a countably infinite set and we let $S(n)$ be the collection of sets of $S$ whose cardinality is $n$.

\begin{theorem}\label{Ramsey}
[Infinite Ramsey Theorem] Let $X$ be a countably infinite set.  Then for all $n, s\geq2$ and for every partition of the set $X(n)$ into $s$ classes, one of the classes contains every element of $Y(n)$ for some infinite set $Y\subset X$.
\end{theorem}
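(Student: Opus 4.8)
The plan is to prove the statement by induction on the size $n$ of the subsets being colored, keeping the number of classes $s$ universally quantified throughout, so that the inductive hypothesis for $(n-1)$-subsets is available for every value of $s$. The base case is essentially the pigeonhole principle: when $n=1$, a partition of $X(1)$ into $s$ classes distributes the countably infinite set $X$ among finitely many classes, so at least one class is infinite and can serve as the desired $Y$.

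For the inductive step I assume the result for $(n-1)$-subsets and am given a partition $c$ of $X(n)$ into classes $1,\ldots,s$. I would build a nested chain of infinite sets together with a sequence of \emph{pivot} elements. Set $X_0=X$, choose any $x_1\in X_0$, and consider the induced partition of the $(n-1)$-subsets of $X_0\setminus\{x_1\}$ sending $A$ to the class of $\{x_1\}\cup A$. The inductive hypothesis produces an infinite $X_1\subseteq X_0\setminus\{x_1\}$ all of whose $(n-1)$-subsets $A$ satisfy that $\{x_1\}\cup A$ lies in one fixed class $\gamma_1$. Repeating this — choose $x_2\in X_1$, pass to an infinite $X_2\subseteq X_1\setminus\{x_2\}$ homogeneous for the analogous induced coloring with class $\gamma_2$, and so on — yields a sequence $x_1,x_2,\ldots$ of distinct elements and classes $\gamma_1,\gamma_2,\ldots\in\{1,\ldots,s\}$ with the crucial property that any $n$-subset $\{x_{i_1},\ldots,x_{i_n}\}$ with $i_1<\cdots<i_n$ lies in class $\gamma_{i_1}$, determined solely by its least-indexed element, since $x_{i_2},\ldots,x_{i_n}$ all belong to $X_{i_1}$.

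Finally I apply the pigeonhole principle once more, now to the sequence of classes: since $\gamma_1,\gamma_2,\ldots$ takes only $s$ values, there is an infinite index set $I$ on which $\gamma_i$ is constantly equal to some class $\gamma$. Setting $Y=\{x_i:i\in I\}$, every $n$-subset of $Y$ has least-indexed element $x_{i_1}$ with $i_1\in I$, hence lies in class $\gamma_{i_1}=\gamma$. Thus all of $Y(n)$ falls in the single class $\gamma$, which is exactly what is required.

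The only delicate point is the bookkeeping in the inductive extraction: one must verify that the homogeneity secured at stage $i$ persists for every later pivot, which is precisely what the inclusions $\{x_{i+1},x_{i+2},\ldots\}\subseteq X_i$ guarantee. I expect the tracking of this invariant — that the class of an $n$-set depends only on its minimum pivot — to be the main point requiring care, rather than any single computation; everything else reduces to repeated use of the pigeonhole principle and the inductive hypothesis.
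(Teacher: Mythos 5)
Your proof is correct: it is the classical pivot-and-thin argument for the infinite Ramsey theorem, with induction on $n$ (the pair size), the pigeonhole principle as the base case, the nested chain $X_0\supseteq X_1\supseteq\cdots$ of homogeneous sets giving the key invariant that the class of an $n$-set depends only on its least pivot, and a final pigeonhole application to the sequence $\gamma_1,\gamma_2,\ldots$ of classes. The paper itself does not prove this statement at all --- it quotes it as a known result with a citation to Ramsey's 1929 paper --- so there is nothing to compare against; your argument is the standard complete proof, and the one delicate point you flag (that homogeneity secured at stage $i$ persists for all later pivots because $x_j\in X_{j-1}\subseteq X_{i}$ for $j>i$) is handled correctly.
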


\begin{Prop}\label{prop2.1}
Let $\mu$ be a probability measure in $\R$. Suppose that $\mu$ has a zero set of multi-Bernoulli structure.  ${\mathcal Z}(\widehat{\mu})$ contains an infinite bi-zero set if and only if $\rho=\left(\frac{n}{m}\right)^{1/r}$ where $r\geq1$,  $m$ is even  and $n$ is odd.
\end{Prop}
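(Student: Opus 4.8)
The plan is to reduce the general multi-component structure to the single-component ($s=1$) case, which is precisely Theorem~\ref{lem7}, by peeling off the components using the infinite Ramsey theorem.

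For the \emph{if} direction, suppose $\rho=(n/m)^{1/r}$ with $m$ even and $n$ odd. Fixing any index, say $i=1$, the set $\{\rho^{-k}a/2:a\in{\mathbb O},k\ge1\}$ (the $\alpha_1$-rescaled first component) is the zero set of a Bernoulli convolution of ratio $\rho$, i.e.\ a zero set of Bernoulli structure. By Theorem~\ref{lem7} it contains an infinite bi-zero set $\Lambda_0$; then $\alpha_1\Lambda_0$ contains $0$ and has all its nonzero differences in $\alpha_1\{\rho^{-k}a/2:a\in{\mathbb O},k\ge1\}\subseteq{\mathcal Z}(\widehat{\mu})$, so it is an infinite bi-zero set of $\mu$.

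For the \emph{only if} direction, let $\Lambda$ be an infinite bi-zero set, so that every nonzero element of $\Lambda-\Lambda$ lies in ${\mathcal Z}(\widehat{\mu})=\bigcup_{i=1}^{s}\alpha_i\{\rho^{-k}a/2:a\in{\mathbb O},k\ge1\}$. Since each component is symmetric under negation (because ${\mathbb O}=-{\mathbb O}$), I can color each unordered pair $\{\lambda,\lambda'\}\subset\Lambda$ by an index $i$ for which $\lambda-\lambda'\in\alpha_i\{\rho^{-k}a/2:a\in{\mathbb O},k\ge1\}$, choosing one if several work. This partitions $\Lambda(2)$ into at most $s$ classes, so by the infinite Ramsey theorem (Theorem~\ref{Ramsey} with $n=2$) there is an infinite subset $\Lambda'\subseteq\Lambda$ and a single index $i_0$ with $\lambda-\lambda'\in\alpha_{i_0}\{\rho^{-k}a/2:a\in{\mathbb O},k\ge1\}$ for all distinct $\lambda,\lambda'\in\Lambda'$. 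Rescaling, $\alpha_{i_0}^{-1}\Lambda'$ is an infinite set all of whose nonzero differences lie in $\{\rho^{-k}a/2:a\in{\mathbb O},k\ge1\}$, hence (after translating so that it contains $0$) an infinite bi-zero set for a measure of Bernoulli structure; Theorem~\ref{lem7} then forces $\rho=(n/m)^{1/r}$ with $r\ge1$, $m$ even, $n$ odd.

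The genuine analytic content is entirely quarantined inside Theorem~\ref{lem7}; the work here is purely the Ramsey reduction. The main point to verify carefully is that the coloring of unordered pairs is well-defined with at most $s$ colors — this uses the symmetry of each component, so that $\lambda-\lambda'$ and $\lambda'-\lambda$ fall into the same class, together with the bi-zero hypothesis guaranteeing $\lambda-\lambda'\in{\mathcal Z}(\widehat{\mu})$ — and that passing to the monochromatic set $\Lambda'$ genuinely confines all differences to a single Bernoulli component, so that Theorem~\ref{lem7} applies verbatim after rescaling by $\alpha_{i_0}^{-1}$.
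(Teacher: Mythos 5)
Your proposal is correct and follows essentially the same route as the paper: the sufficiency is obtained by applying Theorem \ref{lem7} to a single rescaled component, and the necessity is the same infinite Ramsey reduction, coloring unordered pairs of $\Lambda$ by the index of the component containing their difference and passing to a monochromatic infinite subset so that Theorem \ref{lem7} applies after rescaling. Your added remark on the well-definedness of the coloring via ${\mathbb O}=-{\mathbb O}$ is a detail the paper leaves implicit but does not change the argument.
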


\begin{proof}
Let ${\mathcal Z}(\widehat{\mu})= \bigcup_{i=1}^{s}\alpha_i\{\frac{\rho^{-k}a}{2}: a\in{\mathbb O}, k\geq1\}.$ If $\rho=\left(\frac{n}{m}\right)^{1/r}$ where $r\geq1$,  $m$ is even  and $n$ is odd, according to  Theorem \ref{lem7}, there is an infinite bi-zero set in $\alpha_i\{\frac{\rho^{-k}a}{2}: a\in{\mathbb O}, k\geq1\}$ which is also an bi-zero set of $\mu$. The sufficiency follows.

\bigskip

 Suppose that $\rho\not\in\{\left(\frac{n}{m}\right)^{1/r}: r\geq1, m \text{ is even  and } n \text{ is odd}\}$ and there is an infinite bi-zero set $\Lambda$. We partition all subsets of $\Lambda(2)$  into $s$ classes:
 $$
T_i=\left\{\{\lambda_1, \lambda_2\}\subset\Lambda: \lambda_1-\lambda_2\in\alpha_i\{\frac{\rho^{-k}a}{2}: a\in{\mathbb O}, k\geq1\}\right\}, \ i = 1,2,...,s.
$$
 By the infinite Ramsey theorem, there is an  infinite subset $\Gamma$ of $\Lambda$ such that $\Gamma(2)$ are in one of the $T_i$. It means that 
 $$(\Gamma-\Gamma)\setminus\{0\}\subset \alpha_i\left\{\frac{\rho^{-k}a}{2}: a\in{\mathbb O}, k\geq1\right\},$$ 
 which contradicts to Theorem \ref{lem7}. Hence $\Lambda$ is finite.
\end{proof}

\medskip

Before we go into the proof, we also need a notion of maximal bizero set. 

\begin{Def} \label{Maximal bizero} 
Suppose that a Borel probability measure is given by a convolution $\mu = \nu_1\ast\nu_2$ and $\Lambda$ is a bi-zero set of $\mu$. We say that a subset ${\mathcal A}$ is a {\it maximal bi-zero set of $\Lambda$ in $\nu_1$} if ${\mathcal A}$ is a bi-zero set in $\nu_1$ and whenever $\lambda\in\Lambda\setminus {\mathcal A}$, there exists $\alpha\in{\mathcal A}$ such that $\lambda-\alpha\in {\mathcal Z}(\widehat{\mu})\setminus  {\mathcal Z}(\widehat{\nu_1})$.
\end{Def}

We notice that as long as ${\mathcal Z}(\widehat{\nu_1})$ is non-empty and $\lambda\in \Lambda\cap {\mathcal Z}(\widehat{\nu_1})\ne\emptyset$, $\{0,\lambda\}$ must be a bi-zero set of $\nu_1$. Hence, the maximal bi-zero set must have cardinality at least 2. We will use the notion of maximality several parts of  the arguments in the next section.

\medskip


\section{Irrational digits}

In this section, we will consider the digit set as in the last section. 
\begin{equation}\label{D}
\D = \{0,1,a,b\}, \  b>a>1.
\end{equation}
 Our main conclusion is the following:

\begin{theorem}\label{theorem irr}
Suppose that the self-similar measure $\mu_{\rho,\D}$ is a spectral measure with $\D$ as in (\ref{D}), then both $a$ and $b$ are rational.
\end{theorem}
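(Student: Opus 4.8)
The plan is to show that a spectrum $\Lambda$ of $\mu_{\rho,\D}$ forces the first-level factor $\delta_{\rho\D}$ to carry a genuine four-point orthogonal set of exponentials, and then to extract from such a configuration enough rational relations to pin down $a$ and $b$. First I would record the cheap reductions. Since $\mu_{\rho,\D}$ is non-atomic, $L^2(\mu_{\rho,\D})$ is infinite dimensional, so any spectrum $\Lambda$ is an infinite bi-zero set; in particular $\mathcal Z(M_{\D})\neq\emptyset$, and Lemma \ref{Lem1.1} gives that at least one of $\frac{1}{b-a},\frac{a}{b-1},\frac{b}{a-1}$ is rational of odd/odd form. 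By Lemma \ref{Lem1.2}, $\mathcal Z(\widehat\mu)$ is then a union over the active ratios of scaled Bernoulli zero sets, i.e. it has multi-Bernoulli structure, so Proposition \ref{prop2.1} already forces $\rho=(n/m)^{1/r}$ with $m$ even and $n$ odd. This fixes the arithmetic of the dilations and can be used freely below.

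The heart of the argument is to analyze the spectrum through the splitting $\mu_{\rho,\D}=\delta_{\rho\D}\ast\mu_{>1}$, where $\mu_{>1}=\Asterisk_{k\ge 2}\delta_{\rho^k\D}$ is a rescaled copy of $\mu_{\rho,\D}$. Using Definition \ref{Maximal bizero} I would take a maximal bi-zero set $\mathcal A\subseteq\Lambda$ of $\Lambda$ in $\delta_{\rho\D}$. Since $\delta_{\rho\D}$ is supported on four points, $\mathcal A$ is an orthogonal set in the four-dimensional space $L^2(\delta_{\rho\D})$, so $\#\mathcal A\le 4$. The key claim is that completeness forces $\#\mathcal A=4$: if $\#\mathcal A<4$ then $\mathcal A$ fails to span $L^2(\delta_{\rho\D})$, and feeding the self-similar identity $\widehat\mu(\xi)=M_{\D}(\rho\xi)\,\widehat\mu(\rho\xi)$ into the Jorgensen--Pedersen function $Q$ produces a point where $Q(\xi)<1$, contradicting that $\Lambda$ is a spectrum. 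Here the maximality in Definition \ref{Maximal bizero} is essential: it guarantees that every $\lambda\in\Lambda\setminus\mathcal A$ differs from $\mathcal A$ only by zeros coming from $\mu_{>1}$, so the defect of $\mathcal A$ cannot be repaired by the remaining frequencies. Consequently $\delta_{\rho\D}$, equivalently $\delta_{\{0,1,a,b\}}$, admits a real four-point spectrum.

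It then remains to prove a purely combinatorial lemma: if $\{0,1,a,b\}$ with $b>a>1$ admits a real four-point spectrum $\{0,\alpha_1,\alpha_2,\alpha_3\}$, then $a,b$ are rational. I would run the dual orthogonality relations $\sum_{k=0}^{3}e^{2\pi i g\alpha_k}=0$ for every digit-difference $g\in\{1,a,b,a-1,b-1,b-a\}$ and apply the vanishing-sum-of-four fact repeatedly, each such relation activating one of the three ratio-types. Since all six pairwise differences $\alpha_k-\alpha_{k'}$ must lie in $\mathcal Z(M_{\D})$, a short mod-$2$ argument (three numbers cannot pairwise differ by odd multiples of a common half-integer $\frac{p}{2}$) shows that a single rational ratio admits at most two points, so at least two of $\frac{1}{b-a},\frac{a}{b-1},\frac{b}{a-1}$ must be rational. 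An elementary computation shows that rationality of any two of these forces both $a$ and $b$ rational, with the single exceptional configuration $b=a+1$; but there $\frac{b}{a-1}=(a+1)/(a-1)$ is irrational, so the remaining cross-difference $M_{\D}(\alpha_k-\alpha_{k'})=0$ fails and no four-point spectrum can exist. This rules out the exception and closes the argument.

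I expect the main obstacle to be the completeness step $\#\mathcal A=4$: turning the intuition ``a sub-maximal first-level orthogonal set leaves a gap in $Q$'' into a rigorous statement requires carefully combining the self-similar recursion with the maximal bi-zero set structure, and it is the only place where spectrality, rather than mere orthogonality (handled by Lemma \ref{lem5} and Proposition \ref{prop2.1}), is genuinely used.
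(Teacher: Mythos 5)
Your reduction to the case where at least one of the three ratios is rational, and your closing combinatorial observation that $\delta_{\{0,1,a,b\}}$ with $a,b$ irrational cannot carry a four-point bi-zero set (the parity argument on odd multiples of a common half-period, plus the elimination of the two-ratio configuration $b=a+1$), are correct and mirror computations in the paper (Lemma \ref{Lem1.1}, Lemma \ref{lem6}). The gap is the step you yourself flag as the main obstacle: the claim that if $\Lambda$ is a spectrum of $\mu=\delta_{\rho\D}\ast\mu_{>1}$ then a maximal bi-zero set $\mathcal A$ of $\Lambda$ in $\delta_{\rho\D}$ must have cardinality $4$. This is asserted, not proved, and the one-line justification does not go through. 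Writing $Q(\xi)=\sum_{\alpha\in\mathcal A}\sum_{\lambda\in\Lambda_\alpha}\left|M_{\D}(\rho(\xi+\lambda))\right|^2\left|\widehat{\mu}(\rho(\xi+\lambda))\right|^2$, the inner sum over $\Lambda_\alpha$ is controlled by Jorgensen--Pedersen only after the factor $\left|M_{\D}(\rho(\xi+\lambda))\right|^2$ is pulled outside it, and that requires $M_{\D}(\rho\,\cdot)$ to be periodic with period dividing $\lambda-\alpha$ for every $\lambda\in\Lambda_\alpha$. When $a,b$ are irrational, $M_{\D}$ has no such period; this is precisely why Proposition \ref{prop3.8} never argues via the dimension of $L^2(\delta_{\rho\D})$, but instead dominates $|M_{\D}|^2$ by $\frac12\bigl(|M_{\{0,1\}}|^2+|M_{\{0,q/p\}}|^2\bigr)$ --- each summand genuinely periodic --- and then locates a point of strict inequality. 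Worse, your claim must also cover $\rho=(n/m)^{1/r}$ with $r\ge2$, which your own appeal to Proposition \ref{prop2.1} permits; there the fibers $\Lambda_\alpha$ spread over the incommensurable scales $\rho^{-i}\Q$ and no periodicity or quadratic-form estimate of this type is available. The paper handles that regime by an entirely different mechanism (Ramsey's theorem combined with the linear independence of $1,\rho,\dots,\rho^{r-1}$ over $\Q$ in Propositions \ref{prop2.2} and \ref{prop2.3}), and its conclusion is only that every bi-zero set lies inside a proper convolution factor --- never that $\delta_{\rho\D}$ admits a four-point spectrum.

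In short, ``a spectrum of a convolution restricts to a full spectrum of the first factor'' is a strong structural assertion that the paper neither proves nor uses, and essentially all of the difficulty of Theorem \ref{theorem irr} is concentrated in circumventing it. Until you can establish $\#\mathcal A=4$ for every admissible $\rho$, including the irrational roots, the argument is incomplete.
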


\begin{proof}
In order for the measure $\mu_{\rho,\D}$ to be a spectral measure, its zero set and hence ${\mathcal Z}(M_{\D})$ must be non-empty. By Lemma \ref{Lem1.1}, we see that if ${\mathcal Z}(M_{\D})$ is non-empty and one of the $a,b$ is rational, then the other will also be rational. Hence, to prove Theorem \ref{theorem irr}, we may assume that {\it both $a$ and $b$ are irrational} and we show that the self-similar measures $\mu_{\rho,\D}$ cannot be a spectral measure.

\medskip

We divide our consideration into two parts according to Lemma \ref{Lem1.1} and they will be discussed in the next two cases.

\medskip

(1) Exactly one of the three ratios  $\frac{1}{b-a}$, $\frac{a}{b-1}$, $\frac{b}{a-1}$  is rational.

(2) At least two of the ratios are rational.

\medskip

\bigskip

\noindent{\bf Case (1): \underline{Exactly one of the three ratios   is rational.}}

\medskip

We only consider the case $\frac{1}{b-a}$ is rational, the other two is similar. We let that $\frac{1}{b-a}=\frac{p}{q}$ with co-prime odd integer $p, q$. Then ${\mathcal Z}(M_{\D})=\frac{p}{2}{\mathbb O}$ and ${\mathcal Z}(\widehat{\mu}_{\rho, \D})=\bigcup_{k=1}^{\infty}\frac{\rho^{-k}p}{2}{\mathbb O}.$  By  An-Wang Theorem (Theorem \ref{lem8}), $\rho = 1/m$ for some integer $m$. But ${\mathcal Z}(\widehat{\mu}_{\rho, \D})$ has a zero set of Bernoulli structure. We immediately know that it is necessary that  $\rho=\frac{1}{m}$ with $m$ is even. The following proposition  settles the final case.

\begin{Prop}\label{prop3.8}
Suppose $\rho=\frac{1}{m}$ with $m$ is even. Then $\mu_{\rho, \D}$ is not a spectral measure.
\end{Prop}
\begin{proof}

In this case, $${\mathcal Z}(\widehat{\delta}_{\rho\D})=\frac{mp}{2}{\mathbb O},\quad {\mathcal Z}(\widehat{\mu}_{\rho, \D})=\bigcup_{k=1}^{\infty}\frac{m^{k}p}{2}{\mathbb O}.$$
Suppose that  $\Lambda$ is a bi-zero set of $\mu_{\rho, \D}$. Recalling the convolutional formula (\ref{mu_conv}), we can set $\A\subset \Lambda$ to be a maximal bi-zero set of $\mu_{\rho,\D}$ in $\delta_{\rho\D}$.  Then
$$(\A-\A)\setminus\{0\}\subset\frac{mp}{2}{\mathbb O},$$
which implies that $\A$ is a bi-zero set of $\delta_{m^{-1}\{0,1\}}$ and $\delta_{m^{-1}\{0,p\}}$. For any $\alpha\in\A$, let
$$\Lambda_{\alpha}:=\Big\{\lambda\in\Lambda: \lambda-\alpha\in{\mathcal Z}(\widehat{\mu}_{\rho,\D})\setminus{\mathcal Z}(\widehat{\delta}_{\rho\D})\Big\}\cup\{\alpha\}.$$
Then
$$\Lambda_{\alpha}-\alpha\subset\{0\}\cup\left(\bigcup_{k=2}^{\infty}\frac{m^{k}p}{2}{\mathbb O}\right)\subset mp\Z,$$
and hence
$$(\Lambda_{\alpha}-\Lambda_{\alpha})\setminus\{0\}\subset\bigcup_{k=2}^{\infty}\frac{m^{k}p}{2}{\mathbb O}.$$
It implies that $m^{-1}\Lambda_{\alpha}$ is a bi-zero set of $\mu_{\rho,\D}$.

\begin{eqnarray}\label{eqprop3.2}
Q(\xi) &=& \sum_{\lambda\in\Lambda}|\widehat{\mu}_{\rho, \D}(\xi+\lambda)|^2
=\sum_{\lambda\in\Lambda}\left|M_{\D}(m^{-1}(\xi+\lambda))\right|^2|\widehat{\mu}_{\rho, \D}(m^{-1}(\xi+\lambda))|^2\nonumber\\
&=&\sum_{a\in\A}\sum_{\lambda\in \Lambda_{\alpha}}\left|M_{\D}(m^{-1}(\xi+\lambda))\right|^2\left|\widehat{\mu}(m^{-1}(\xi+\lambda))\right|^2\nonumber\\
\end{eqnarray}
Since $\frac{1}{b-a}=\frac{p}{q}$, we have
\begin{equation}\label{eqstrict}
\begin{aligned}
|M_{\D}(\xi)|^2=&\frac{1}{16}\left|(1+e^{2\pi i\xi}+e^{2\pi ia\xi}(1+e^{2\pi i q/p\xi}))\right|^2\\
\le&\frac{1}{4}\left(\left|\frac{1+e^{2\pi i\xi}}{2}\right|+\left|\frac{1+e^{2\pi i q/p\xi}}{2}\right|\right)^2\\
\le &\frac{1}{2}\left(\left|\frac{1+e^{2\pi i\xi}}{2}\right|^2+\left|\frac{1+e^{2\pi i q/p\xi}}{2}\right|^2 \right)\\
=& \frac12\left(|M_{\{0, 1\}}(\xi)|^2+|M_{\{0, \frac{q}{p}\}}(\xi)|^2\right).
\end{aligned}
\end{equation}
We have used $(x+y)^2\leq2(x^2+y^2)$ in the last inequality. The equality holds if and only if $x=y$. Note that
$$\Lambda_{\alpha}=\alpha+\Lambda_{\alpha}-\alpha\subset \alpha+mp\Z$$
and  $|M_{\{0, 1\}}(\xi)|$ is 1-period, $|M_{\{0, q/p\}}(\xi)|$ is $p/q$-period.
Putting these into (\ref{eqprop3.2}),

\begin{eqnarray*}\label{eqprop3.3}
&&Q(\xi)\le\sum_{a\in\A}\sum_{\lambda\in \Lambda_{\alpha}}\frac12\left(\left|M_{\{0,1\}}(m^{-1}(\xi+\lambda))\right|^2+\left|M_{\{0,q/p\}}(m^{-1}(\xi+\lambda))\right|^2\right)\left|\widehat{\mu}(m^{-1}(\xi+\lambda))\right|^2\\
&=&\sum_{a\in\A}\frac12\left(\left|M_{\{0,1\}}(m^{-1}(\xi+\alpha))\right|^2+\left|M_{\{0,q/p\}}(m^{-1}(\xi+\alpha))\right|^2\right)\sum_{\lambda\in \Lambda_{\alpha}}\left|\widehat{\mu}(m^{-1}(\xi+\lambda))\right|^2\\
&\le&\sum_{a\in\A}\frac12\left(\left|M_{\{0,1\}}(m^{-1}(\xi+\alpha))\right|^2+\left|M_{\{0,q/p\}}(m^{-1}(\xi+\alpha))\right|^2\right)\\
&\le&1.
\end{eqnarray*}
Let $0<\xi<\min\{\frac{1}{2}, \frac{p}{2q}\}$, then $0<\frac{2\pi \xi}{m}, \frac{2\pi q \xi}{mp}<\pi$, which together with $\frac{q}{p}\neq1$, yields that 
$$|1+e^{2\pi i\xi/m}|\neq|1+e^{2\pi i q\xi /mp}|.$$ Hence, for this $\xi$, $$|M_{\D}(m^{-1}\xi)|^2<\frac12\left(|M_{\{0, 1\}}(m^{-1}\xi)|^2+|M_{\{0, \frac{q}{p}\}}(m^{-1}\xi)|^2\right).$$ Hence, the inequality in the third line of  (\ref{eqstrict}) is indeed strict at $\alpha = 0$, proving the proposition.
\end{proof} 

\bigskip

\noindent{\bf Case (2): \underline{At least two of the ratios are rational.}}

\medskip

We start proving this case with a simple lemma.

\begin{Lem}\label{lem6}
Suppose that at least two of the ratios are rational. Then ${\mathcal D} = \{0,1,a,a+1\}$ with $a$ is irrational and
$$
{\mathcal Z}(M_{\D})=\frac{1}{2}{\mathbb O}\cup \frac{1}{2a}{\mathbb O}.
$$

\end{Lem}

\begin{proof}Note that

(i) If $ \frac{a}{b-1}=\frac{p_1}{q_1}$ and $ \frac{1}{b-a}=\frac{p_2}{q_2}$. Then
$$b=\frac{p_1}{q_1}a+1=a+\frac{p_2}{q_2} \ \Longrightarrow \  (1-\frac{p_1}{q_1})a=1-\frac{p_2}{q_2}.$$
Since $a$ is irrational, we have $\frac{p_1}{q_1}=\frac{p_2}{q_2}=1$. In this case, $\D=\{0, 1, a, a+1\}$.

(ii) If $ \frac{1}{b-a}=\frac{p_2}{q_2}$ and $\frac{b}{a-1}=\frac{p_3}{q_3}$. Then
$$b=a+\frac{p_2}{q_2}=\frac{p_3}{q_3}a-\frac{p_3}{q_3} \ \Longrightarrow \  \frac{p_2}{q_2}=-1,\frac{p_3}{q_3}=1.$$
Then $b=a-1$ which contradicts to $b>a$.

(iii) If $ \frac{a}{b-1}=\frac{p_1}{q_1}$ and $\frac{b}{a-1}=\frac{p_3}{q_3}$, then
$$b=\frac{p_1}{q_1}a+1=\frac{p_3}{q_3}a-\frac{p_3}{q_3} \ \Longrightarrow \  \frac{p_1}{q_1}=\frac{p_3}{q_3}=-1.$$
Then $b=-a+1<0$ which is a contradiction again.

\bigskip
This shows that $\D=\{0, 1, a, a+1\}$. 
As now the mask function of $\D$ is $M_{\D}(\xi) = \frac14(1+e^{2\pi i \xi})(1+e^{2\pi i a\xi})$, the ${\mathcal Z}(M_{\D})$ must have the desired structure.
\end{proof}

\medskip

As $a$ is irrational, ${\mathcal Z}(M_{\D})$ is not in a lattice. Theorem \ref{lem8} cannot be used. However, the zero set has a multi-Bernoulli structure with $s = 2$. Indeed, we have
$$
{\mathcal Z}(\widehat{\mu}_{\rho,\D})=\bigcup_{k=1}^{\infty}\frac{\rho^{-k}}{2}{\mathbb O}\cup\bigcup_{k=1}^{\infty}\frac{\rho^{-k}}{2a}{\mathbb O}.$$
As a result of Proposition \ref{prop2.1}, bi-zero set of infinite cardinality exist only when $\rho=\left(\frac{n}{m}\right)^{1/r}$ with $n$ odd and $m$ even.
Denote respectively by $\Gamma_1$ and $\Gamma_2$ the sets $\bigcup_{k=1}^{\infty}\frac{\rho^{-k}}{2}{\mathbb O}$ and  $\bigcup_{k=1}^{\infty}\frac{\rho^{-k}}{2a}{\mathbb O}$. We have
\begin{equation}\label{eq_conv_mu}
\mu_{\rho,\D}=\mu_{\rho,\{0,1\}}\ast\mu_{\rho,\{0,a\}}
\end{equation}
and ${\mathcal Z}(\widehat{\mu}_{\rho,\{0,1\}}) =\Gamma_1$, ${\mathcal Z}(\widehat{\mu}_{\rho,\{0,a\}}) =\Gamma_2$.    For $1\le i\le r$, let 
\begin{equation}\label{eq-Gamma}
\Gamma_1^{(i)}=\bigcup_{k=0}^{\infty}\frac{\rho^{-(kr+i)}}{2}{\mathbb O} \text{\ \ and\ \  }\Gamma_2^{(i)}=\bigcup_{k=0}^{\infty}\frac{\rho^{-(kr+i)}}{2a}{\mathbb O}.    
\end{equation}
Note that $\Gamma_1^{(i)}$ is the zero set of the self-similar measure $\mu_i=\mu_{\frac{n}{m},\frac{m}{n}\rho^i\{0,1\}}$ and $\Gamma_2^{(i)}$ is the zero set of the self-similar measure $\nu_i=\mu_{\frac{n}{m},\frac{m}{n}\rho^{i}\{0,a\}}$. Moreover, we now have $$\mu_{\rho,\D}=\mu_1\ast\cdots\ast\mu_{r}\ast\nu_1\ast\cdots\ast\nu_{r}.$$
We will now  prove Proposition \ref{prop2.2} and Proposition \ref{prop2.3}. Combining these two propositions, we will prove that there is no spectral measure in case (2).

\medskip

In the proofs, Lemma \ref{lem5}  will be used frequently.  Moreover, an algebraic fact will also be used:
\begin{equation}\label{alg fact}
a_0+a_1\rho+\cdots+a_{r-1}\rho^{r-1}=0, \  \ a_j\in{\Bbb Q} \Longrightarrow a_0=\cdots=a_{r-1}=0,
\end{equation}
which follows directly from the fact that the minimal polynomial of $\rho$ is of degree $r$.

\begin{Prop}\label{prop2.2}
Let $\rho=\left(\frac{n}{m}\right)^{1/r}$ with co-prime integer $m, n$ such that the minimal polynomial of $\rho=\left(\frac{n}{m}\right)^{1/r}$ in $\Q[x]$ is $x^r-\frac{n}{m}$. If (i) $r=1$ or  (ii) $r\geq2$ and $a\in\rho^{-i_0}\Q$ for some integer $i_0$ holds, then  $\mu_{\rho,\D}$ is not a spectral measure.
\end{Prop}

\begin{proof}
\noindent(i) Suppose that  $r=1$. $\rho$ is a rational number. It is clear that $\Gamma_1\subset\Q, \Gamma_2\subset\frac1{a}\Q$ where $a$ is irrational. Hence
$$(\Gamma_1-\Gamma_1)\cap \Gamma_2=\emptyset,\  (\Gamma_2-\Gamma_2)\cap \Gamma_1=\emptyset.$$
Suppose that $\Lambda$ is any bi-zero set of $\mu_{\rho,\D}$, the above two equalities force that all of the $\lambda_1,\lambda_2,\lambda_1-\lambda_2$ belongs to one of the $\Gamma_i$, $i=1,2$. That is to say $\Lambda$ is a bi-zero set of $\mu_{\rho,\{0,a\}}$ or $\mu_{\rho,\{0,1\}}$ and it is not a spectrum of $\mu_{\rho,\D}$ from Lemma \ref{lem5}. Hence $\mu_{\rho,\D}$ is not a spectral measure.

\bigskip
\noindent (ii) Suppose now that  $r\geq2$ and $a\in\rho^{-i_0}\Q$. Then $\Gamma_1^{(r)}, \Gamma_2^{(i_0)}\subset\Q$. Moreover, we have 
$\Gamma_1^{(j)}\subset\rho^{-j}\Q$  for all $1\le j\le r-1$ and
$$
\Gamma_2^{(i)}=\rho^{-(i-i_0)}\left(\bigcup_{k=0}^{\infty}\frac{\rho^{-kr}}{2a\rho^{i_0}}{\mathbb O}\right)\subset\rho^{-(i-i_0)}\Q, \quad\text{ for all } i\neq i_0.
$$
Let $\Lambda$ be any bi-zero set of $\mu$. The above inclusion shows that for all distinct elements $\lambda_1,\lambda_2\in\Lambda$, we can write
 $$\lambda_1=\rho^{-j_1}c_1,\   \lambda_2=\rho^{-j_2}c_2 \text{ and }\lambda_1-\lambda_2=\rho^{-j}c,$$
 where $0\le j_1, j_2, j\le r-1$ and $c_1, c_2, c$ is nonzero rational. We have the following equation holds:
 $$
 \rho^{-j_1}c_1-\rho^{-j_2}c_2=\rho^{-j}c.
 $$
 From the algebraic fact (\ref{alg fact}), $j = j_1 = j_2$.  It yields that 
 $$
 \Lambda-\Lambda\setminus \{0\}\subset \left\{\begin{array}{ll}
     \Gamma_1^{(r)}\cup\Gamma_2^{(i_0)} & \mbox{if} \  j = 0   \\
     \bigcup_{i=1}^{r-1}\Gamma_1^{(i)}\cup\bigcup_{i\neq i_0}\Gamma_2^{(i)} & \mbox{if}  \ 0<j\le r-1
 \end{array} \right.
 $$
In the first case,$\Lambda$ must be a bi-zero set of $\mu_{r}\ast\nu_{i_0}$, while in the second case, $\Lambda$ is a bizero set of  $\left(\Asterisk_{i=1}^{r-1}\mu_i\right)\ast\left(\Asterisk_{i\neq i_0}\nu_i\right)$. At least one of the measures are missing from (\ref{eq_conv_mu}). Hence $\Lambda$ cannot be a spectrum of $\mu$ by Lemma \ref{lem5}.

\end{proof}

\medskip

\bigskip

\begin{Prop}\label{prop2.3}
Let $n, m$ be co-prime and let $r>1$ such that the minimal polynomial of $\rho=\left(\frac{n}{m}\right)^{1/r}$ in $\Q[x]$ is $x^r-\frac{n}{m}$ and $a\not\in \rho^{i}\Q$ for any $i$. Then  $\mu_{\rho,\D}$ is not a spectral measure.
\end{Prop}

\begin{proof}
We will prove the result by showing that any bi-zero set $\Lambda$ is not spectrum of  $\mu_{\rho, \D}$.  When $\Lambda$ is finite, clearly it can not be a spectrum. Suppose $\Lambda$ is infinite, since bi-zero sets (spectra) are invariant
under translation, without loss of generality, we always assume that $0\in\Lambda$. We partition $\Lambda(2)$  into $2r$ classes:
 $$
\E_j^{(i)}=\left\{\{\lambda_1, \lambda_2\}\subset\Lambda: \lambda_1-\lambda_2\in\Gamma_j^{(i)}\right\},\quad 1\le i\le r, j=1, 2.
$$
 By the infinite Ramsey theorem (Theorem \ref{Ramsey}), there is an  infinite subset $\Lambda_0$ of $\Lambda$ such that every subset of $\Lambda_0(2)$  are in  $\E_1^{(i)}$ or $\E_2^{(i)}$ for some $i = 1,...,r$. It means that there exists $i_0$ such that 
 $$(\Lambda_0-\Lambda_0)\setminus\{0\}\subset \Gamma_1^{(i_0)}  \ \mbox{or} \ (\Lambda_0-\Lambda_0)\setminus\{0\}\subset\Gamma_2^{(i_0)}.$$ 
We will only work on the first case. The second case is similar.  Assume
 \begin{equation}\label{Lambda_0}
      (\Lambda_0-\Lambda_0)\setminus\{0\}\subset \Gamma_1^{(i_0)}\subset \rho^{-i_0}\Q.
 \end{equation}
and $\Lambda_0$ is the maximal bi-zero set of $\Lambda$ in $\mu_i$ as in Definition \ref{Maximal bizero}. 
 
\medskip
If  $\Lambda_0=\Lambda$, then  $\Lambda$ is a bi-zero set of $\mu_{i}$ and hence it cannot be a spectrum of $\mu$ by Lemma \ref{lem5}.  Suppose $\Lambda_0\not=\Lambda$ and choose  $\lambda\in \Lambda\setminus\Lambda_0$,  there is an element $\lambda_0\in\Lambda_0$ such that 
$$\lambda-\lambda_0\in \bigcup_{i\neq i_0}\Gamma_1^{(i)}\cup\bigcup_{i=1}^{r}\Gamma_2^{(i)}\subset\bigcup_{i\neq i_0}\rho^{-i}\Q\cup\bigcup_{i=1}^{r}a^{-1}\rho^{-i}\Q .$$  
It together with equation \eqref{Lambda_0} imply that 
\begin{eqnarray*}
\Lambda_0-\lambda&=&(\Lambda_0-\lambda_0)-(\lambda-\lambda_0)\\
&\subset&\left\{\rho^{-i_0}\Q-\left(\bigcup_{i\neq i_0}\rho^{-i}\Q\cup\bigcup_{i=1}^{r}a^{-1}\rho^{-i}\Q\right)\right\}\cap{\mathcal Z}(\widehat{\mu}_{\rho, \D})\\
\end{eqnarray*}
We claim that 
$$
\left\{\rho^{-i_0}\Q-\left(\bigcup_{i\neq i_0}\rho^{-i}\Q\cup\bigcup_{i=1}^{r}a^{-1}\rho^{-i}\Q\right)\right\}\cap \rho^{-i_0}\Q = \emptyset. 
$$
To see this, suppose that the intersection is non-empty. Then there exists $c_1,c_2,c_3\in \Q$ such that 
$$
\rho^{-i_0}c_1-\rho^{-i}c_2 = \rho^{-i_0}c_3 \ \mbox{or} \ \rho^{-i_0}c_1-\rho^{-i}a^{-1}c_2 = \rho^{-i_0}c_3.
$$
The first case is impossible since the minimal polynomial of $\rho$ is of degree $r$, but $i_0,i<r$. The second case is also impossible since this implies that $a\in \rho^{-(i-i_0)}\Q$. Hence, this justfies the claim. 

\medskip

However, $\Lambda_0-\lambda\subset {\mathcal Z}(\widehat{\mu}_{\rho,\D})$ holds, so 

$$\Lambda_0-\lambda\subset \bigcup_{i\neq i_0}\rho^{-i}\Q\cup\bigcup_{i=1}^{r}a^{-1}\rho^{-i}\Q.
$$ 
By the infinite Ramsey theorem, there is an infinite subset $\Lambda_1-\lambda\subset\Lambda_0-\lambda$ such that 
$$(\Lambda_1-\lambda)-(\Lambda-\lambda)\subset \rho^{-i_1}\Q \quad\text{ with } 1\le i_1\neq i_0\le r\quad \text{ or }\quad  a^{-1}\rho^{-i_1}\Q \text{ with } 1\le i_1\le r.$$
But $\Lambda_1-\Lambda_1 = (\Lambda_1-\lambda)-(\Lambda-\lambda)$ and it is a subset of $\Lambda_0-\Lambda_0$. By (\ref{Lambda_0}), this should be a subset of $\rho^{-i_0}\Q$.It implies that 
$$\Lambda_1-\Lambda_1\subset  (\rho^{-i_1}\Q)\cap (\rho^{-i_0}\Q), \ \mbox{or} \  \subset (a^{-1}\rho^{-i_1}\Q)\cap (\rho^{-i_0}\Q).
$$
But we know that $(\rho^{-i_1}\Q)\cap (\rho^{-i_0}\Q) = \{0\}$ by the fract that minimal polynomial of $\rho$ is degree $r$ and $(a^{-1}\rho^{-i_1}\Q)\cap (\rho^{-i_0}\Q) = \{0\}$ holds by the fact that $a\not\in \rho^{i}\Q$ for any $i$.
Both of them have $\{0\}$ as their intersection, meaning that $\Lambda_1$ has only one point. It contradicts to $\Lambda_1\subset\Lambda_0$ is an infinite set.  We have completed the proof.
\end{proof}

 We now know that the self-similar measures in both cases are not be spectral. Hence, this completes the proof of Theorem \ref{theorem irr}. 
\end{proof}

\medskip

%
%
%
%
%
\section{Rational digits}
We now consider the rational digits. By rescaling,  we may assume that  $\D=\{0, a, b, c\}\subset\Z^+$ with $\text{gcd}(a,b,c)=1$. In \cite{FHL2015}, the authors studied the spectrality of self-similar measures with integer digit sets of cardinality 4 and contraction ratio being $1/4$. They showed that self-similar measures are spectral in their case if and only if the measure is the Lebesgue meaure supported on the self-similar tiles. Now, we are working general contraction ratio. Nonetheless, the following lemma is important to us in this section. It can be found in \cite[Lemma 5.3 and Lemma 5.4]{FHL2015}.

\begin{Lem}\label{lem4.1}
 Let $\D=\{0, a, b, c\}\subset\Z^+$ with $gcd(a,b,c)=1$. Then the equation $M_{\D}(\xi)=0$ has a solution if and only if exactly two of the $a, b, c$ are odd and the remaining one is even.
 
 \medskip
 
 Without loss of generality, if we assume that $a<c\in{\mathbb O}$ and $b=2^t\ell$ with $t\geq1$ and $\ell\in{\mathbb O}$, then the solutions of the equation $M_{\D}(\xi)=0$ are given by:

(i)
 $${\mathcal Z}(M_{\D})=\frac{1}{2\mbox{\rm gcd}(a, c-b)}{\mathbb O}\cup \frac{1}{2\mbox{\rm gcd}(c, b-a)}{\mathbb O},$$
 if $c-a=2^{t'}\ell'$ for some $t'\neq t$ and $\ell'\in{\mathbb O}$.

\medskip

(ii) $${\mathcal Z}(M_{\D})=\frac{1}{2\mbox{\rm gcd}(a, c-b)}{\mathbb O}\cup \frac{1}{2\mbox{\rm gcd}(c, b-a)}{\mathbb O}\cup \frac{1}{2^{1+t}\mbox{\rm gcd}(\ell,\ell')}{\mathbb O},$$
if $c-a=2^{t}\ell'$ with $\ell'\in{\mathbb O}$.
\end{Lem}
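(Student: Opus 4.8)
The plan is to reduce everything to the elementary classification of vanishing sums of four unimodular numbers quoted just before Lemma~\ref{Lem1.1}. Writing $M_{\D}(\xi)=\frac14\bigl(1+e^{-2\pi i a\xi}+e^{-2\pi i b\xi}+e^{-2\pi i c\xi}\bigr)$, a zero of $M_{\D}$ forces the constant term $1$ to be cancelled by one of the three exponentials being $-1$, while the remaining two are negatives of each other. This yields exactly three cases: (A) $a\xi,\,(c-b)\xi\in\frac12+\Z$; (B) $b\xi,\,(c-a)\xi\in\frac12+\Z$; (C) $c\xi,\,(b-a)\xi\in\frac12+\Z$. Hence ${\mathcal Z}(M_{\D})$ is the union of the solution sets of these three systems, and the whole statement reduces to solving a single pair of conditions $k_1\xi,\,k_2\xi\in\frac12+\Z$ for integers $k_1,k_2$.

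First I would record the solvability criterion: the pair $k_1\xi,\,k_2\xi\in\frac12+\Z$ has a common solution if and only if $k_1$ and $k_2$ share the same $2$-adic valuation, equivalently $k_1/\gcd(k_1,k_2)$ and $k_2/\gcd(k_1,k_2)$ are both odd; and when solvable the full solution set is $\tfrac{1}{2\gcd(k_1,k_2)}{\mathbb O}$. This is obtained by solving the first condition as $\xi=\frac{2m+1}{2k_1}$, substituting into the second, and using coprimality of $k_1/\gcd$ and $k_2/\gcd$ together with their oddness to force $k_1/\gcd$ to divide the odd numerator; the surviving odd multiples then sweep out exactly $\tfrac{1}{2\gcd(k_1,k_2)}{\mathbb O}$.

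The existence half of the lemma then follows from a parity audit. Since $\gcd(a,b,c)=1$ excludes the all-even pattern, three patterns remain. If $a,b,c$ are all odd, every case pairs an odd integer with a difference of two odd integers, so the $2$-adic valuations never agree and ${\mathcal Z}(M_{\D})=\emptyset$. If exactly one of $a,b,c$ is odd, checking the three possible positions of the odd element shows that in each case the two paired integers again have distinct valuations, so there is no zero. If exactly two are odd and one even, at least one case is solvable, so ${\mathcal Z}(M_{\D})\neq\emptyset$; this gives the claimed equivalence.

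For the explicit form, normalise as in the statement with $a<c$ odd and $b=2^{t}\ell$. Cases (A) and (C) pair $a$ with the odd number $c-b$, and $c$ with the odd number $b-a$, so both sides have valuation $0$ and these cases always contribute $\tfrac{1}{2\gcd(a,c-b)}{\mathbb O}$ and $\tfrac{1}{2\gcd(c,b-a)}{\mathbb O}$. Case (B) pairs $b$, of valuation $t$, with $c-a=2^{t'}\ell'$, of valuation $t'$, and by the criterion contributes if and only if $t'=t$; in that case $\gcd(b,c-a)=2^{t}\gcd(\ell,\ell')$, turning its contribution into $\tfrac{1}{2^{1+t}\gcd(\ell,\ell')}{\mathbb O}$. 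Taking unions gives (i) when $t'\neq t$ and (ii) when $t'=t$. The one genuinely delicate step is the valuation bookkeeping behind the solvability criterion in the second paragraph; once that is clean, the parity audit and the three $\gcd$ computations are routine. A minor point is that differences like $c-b$ may be negative, but $\frac12+\Z$ is symmetric under sign change and $\gcd$ is unaffected, so this never matters.
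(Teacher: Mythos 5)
Your proof is correct and complete. The paper does not actually prove this lemma itself (it is quoted from \cite[Lemmas 5.3 and 5.4]{FHL2015}), but your argument follows exactly the template the paper uses for Lemma \ref{Lem1.1} and Lemma \ref{Lem1.2}: reduce via the vanishing-sum-of-four-unimodular-numbers fact to three systems of the form $k_1\xi,\,k_2\xi\in\tfrac12+\Z$, and your $2$-adic solvability criterion (common solution iff $k_1/\gcd(k_1,k_2)$ and $k_2/\gcd(k_1,k_2)$ are both odd, with solution set $\tfrac{1}{2\gcd(k_1,k_2)}{\mathbb O}$), the parity audit under $\gcd(a,b,c)=1$, and the three $\gcd$ computations in the normalised case all check out.
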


\medskip
Because of the previous lemma, we can let  $b = 2^{t_1}\ell_1$, $c-a=2^{t_2}\ell_2$ where $t_1, t_2\geq1$ and $\ell_1, \ell_2$ are positive odd integers. Therefore, we can write
\begin{equation}\label{eqD_rational}
\D = \{0, a, 2^{t_1}\ell_1, a+2^{t_2}\ell_2\}, \ a,\ell_1,\ell_2\in{\mathbb O}.
\end{equation}
i.e. $b = 2^{t_1}\ell_1$ and $c = a+2^{t_2}\ell_2$. 
 Our main result in this section is 

\begin{theorem}\label{main theo}
$\mu_{\rho, \D}$ is a spectral measure if and only if $\rho=\frac{1}{2^{\beta}m}$ where $m$ is an odd integer and $t = t_1=t_2$ with $t$ is not divisible by $\beta$.
\end{theorem}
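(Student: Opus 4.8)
The plan is to prove both directions separately, with the harder and more substantial part being the necessity of the stated conditions. For the sufficiency direction, I would argue that when $\rho = \frac{1}{2^\beta m}$ and $t = t_1 = t_2$ is not divisible by $\beta$, the digit set becomes (up to the explicit form in \eqref{eqD_rational}) one which, after passing to a suitable power $N = \rho^{-1} = 2^\beta m$, generates a product-form Hadamard triple. This is essentially the content of the final paragraph of Theorem \ref{main-theorem} together with \cite[Theorem 1.11]{AL2022}: one explicitly exhibits ${\mathcal L}_1 = \{0,2\}$ and ${\mathcal L}_2 = \{0, N 2^{r-1}\}$ with $t = \beta k + r$, verifies the two Hadamard-triple conditions of Definition \ref{definition-product-form}, and then invokes the cited spectrality result to conclude $\mu_{\rho,\D}$ is spectral. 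So the sufficiency reduces to a finite algebraic check plus quotation of known results.

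The necessity direction is where the real work lies, and I would structure it around the zero-set analysis developed in this section. Assuming $\mu_{\rho,\D}$ is spectral with $\D$ as in \eqref{eqD_rational}, I would first establish that $\rho = 1/N$ for an integer $N$ and then pin down its $2$-adic structure, writing $N = 2^\beta m$ with $m$ odd; this is where I expect Lemma \ref{main-lem1} (statement (ii) of the main theorem) to be invoked, itself resting on An--Wang (Theorem \ref{lem8}) since the zero set lies in a lattice once the digits are integers. The substantive remaining claim is that $t_1 = t_2$ and that the common value $t$ is not divisible by $\beta$. For the equality $t_1 = t_2$, I would exploit the dichotomy in Lemma \ref{lem4.1}: cases (i) and (ii) give genuinely different zero-set structures depending on whether $t' = t$ or $t' \neq t$ in the decomposition of $c - a$, and I would show that the ``extra'' Bernoulli-type component $\frac{1}{2^{1+t}\gcd(\ell,\ell')}{\mathbb O}$ appearing in case (ii) is precisely what is needed for $Q(\xi) \equiv 1$ to be achievable, whereas case (i) forces the zero set to decompose into incompatible arithmetic progressions that cannot be assembled into a spectrum.

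The heart of the argument, and the step I expect to be the main obstacle, is showing $\beta \nmid t$. Here I would use a $Q(\xi) \le 1$ argument in the spirit of Proposition \ref{prop3.8}: one bounds $|M_{\D}(\xi)|^2$ above by an average of squared mask functions of simpler two-point digit sets via the elementary inequality $(x+y)^2 \le 2(x^2+y^2)$, tracks the maximal bi-zero sets in the successive convolution factors $\delta_{\rho^k\D}$, and shows that if $\beta \mid t$ then the contraction $\rho^{-1} = 2^\beta m$ resynchronizes the two Bernoulli scales in a way that forces a strict inequality $Q(\xi) < 1$ on an interval, contradicting the Jorgensen--Pedersen criterion. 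The delicate bookkeeping is in matching the $2$-adic valuation $t$ against the period $\beta$ of the powers of $2$ dividing $N$, so that $r = t \bmod \beta$ must be nonzero for the orthogonality relations to close up correctly across scales; this is exactly the obstruction that selects the admissible $t$ and ties the analysis back to the product-form structure exhibited in the sufficiency direction.
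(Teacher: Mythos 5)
Your sufficiency direction is fine and matches the paper: reduce to the product-form Hadamard triple with ${\mathcal L}_1=\{0,2\}$, ${\mathcal L}_2=\{0,N2^{r-1}\}$ and quote \cite[Theorem 1.11]{AL2022}. Likewise, your first step of the necessity (integrality and evenness of $N=\rho^{-1}$ via An--Wang and Lemma \ref{main-lem1}) is exactly what the paper does. The gap is in the two substantive necessity claims, $t_1=t_2$ and $\beta\nmid t$. For $t_1\ne t_2$ you assert that the zero set decomposes into ``incompatible arithmetic progressions that cannot be assembled into a spectrum,'' but incompatibility of the progressions does not by itself rule out a spectrum: a bi-zero set can be infinite while confined to a single progression, so the failure one must exhibit is a failure of \emph{completeness}, not of orthogonality. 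The paper's actual mechanism is to first prove a parity lemma (Lemma \ref{lem-lam}) forcing $\Lambda\cap{\mathcal Z}_1$ into a single component ${\mathcal R}_1$ or ${\mathcal R}_2$, and then to construct an explicit non-zero $F=f\ast{\mathcal X}_{N^{-1}K(N,\D)}$ with $f={\mathcal X}_{N^{-1}\{0,a\}}-{\mathcal X}_{N^{-1}\{b,c\}}$ (or the analogous signed indicator for ${\mathcal R}_2$) and verify $\langle F,e_\lambda\rangle=0$ for every $\lambda\in\Lambda$. Your outline contains neither the localization step nor the orthogonal witness, and without one of these the argument does not close.

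For $\beta\mid t$ your proposed route --- a strict $Q(\xi)<1$ bound via $|M_{\D}|^2\le\frac12\bigl(|M_{\{0,1\}}|^2+|M_{\{0,q/p\}}|^2\bigr)$ in the spirit of Proposition \ref{prop3.8} --- is not what the paper does, and it is not clear it can be made to work here. That inequality in Proposition \ref{prop3.8} exploits the fact that, in the irrational Case (1), the zero set has a \emph{single} Bernoulli component and the mask splits into exactly two two-point masks whose zero sets account for everything; the maximal bi-zero set bookkeeping then forces strictness at one point. In the rational case with $t_1=t_2=\beta r$, Lemma \ref{lem4.1}(ii) gives \emph{three} components $\frac{1}{2p_1}{\mathbb O}\cup\frac{1}{2p_2}{\mathbb O}\cup\frac{1}{2^{1+t}p_3}{\mathbb O}$, the mask does not factor compatibly with all three groupings at once, and a bi-zero set can mix components across different scales $N^k$; your ``delicate bookkeeping'' is precisely the content of the paper's Lemmas \ref{lemma4.6}, \ref{lem-lam2}, \ref{lem-lam3} and \ref{lem-case2}, which establish a trichotomy confining $\Lambda\cap({\mathcal Z}_1\cup{\mathcal Z}_{r+1})$ to one of ${\mathcal R}_3$, ${\mathcal S}_1$, ${\mathcal S}_2$ --- and the paper then again finishes with an explicit orthogonal function supported on the attractor (with $f$ now taken at scale $N^{-r-1}$ in cases (II) and (III)), not with a $Q(\xi)<1$ estimate. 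As written, your plan asserts the conclusion of this analysis without supplying the 2-adic difference equations that drive it, so the heart of the necessity proof is missing.
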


Combining with the results in the Section 3, this will complete the proof of Theorem \ref{main-theorem}.

\bigskip

\subsection{Proof of the sufficiency of Theorem \ref{main theo}.} \label{subsection 4.1}The sufficiency has been proved in our previous paper \cite[Theorem 1.11]{AL2022}. Since the proof is short, we provide it here for completeness. Indeed, let $N = 2^{\beta}m $ where $\beta\ge 1$ and $m$ is odd, and if  $\D = \{0,a,2^t\ell_1,a+2^t\ell_2\}$ where $a,\ell_1,\ell_2$ are odd numbers and $t$ is not divisible by $\beta$ as in the assumption, we can write $t = \beta k +r$ where $k\ge 0$ and $r \in\{1,...,\beta-1\}$. Now, we can write 
$$
\D = (\{0\}+ 2^{\beta k} \{0,2^r\ell_1\})\cup (\{a\}+ 2^{\beta k} \{0,2^r\ell_2\})
$$
Multiplying $m^k$ on both sides, we obtain
$$
m^k\D = (\{0\}+ N^k \{0,2^r\ell_1\})\cup (\{a m^k\}+ N^k\{0,2^r\ell_2\}).
$$ 
Note that $\mu_{N,\D}$ is spectral if and only if  $ \mu_{N,m^k\D}$ is spectral.

\medskip
We now prove that $m^k\D$ is a one-stage product-form  of $N$ generated by some Hadamard triples. Inded, let $\A = \{0,am^k\}$, $\B_0 = \{0,2^r\ell_1\}$ and $\B_a = \{0,
2^r\ell_2\}$. Define  ${\mathcal L}_1 = \{0, \frac{N}{2}\}$ and ${\mathcal L}_{2} = \{0, N2^{r-1}\}$. We can check immediately that $(N,\A,{\mathcal L}_1)$ and $(N,\B_i,{\mathcal L}_2)$ are Hadamard triples for all $i = 0$ or $a$.   Moreover, $(N, \A\oplus \B_i, {\mathcal L}_1\oplus{\mathcal L}_2)$ also forms a Hadamard triple (note that this is false if $r = 0$).\qquad$\Box$

\medskip

\subsection{Proof of the necessity of Theorem \ref{main theo}.} \label{subsection 4.2}We now focus on proving the necessity of Theorem \ref{main theo}. The following lemma first settle the contraction ratio. 

\medskip
\begin{Lem}\label{main-lem1}
If $\mu_{\rho, \D}$ is a spectral measure, then $\rho=\frac{1}{N}$ where $N$ is an even integer.
\end{Lem}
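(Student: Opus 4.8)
The plan is to show $\rho=1/N$ with $N$ even by combining the An--Wang theorem (Theorem \ref{lem8}) with the multi-Bernoulli structure results of Section 2. The first order of business is to identify the zero set ${\mathcal Z}(M_{\D})$ explicitly using Lemma \ref{lem4.1}, which applies directly to $\D=\{0,a,2^{t_1}\ell_1,a+2^{t_2}\ell_2\}$ since exactly two of the nonzero digits are odd. The key structural observation is that in all the cases of Lemma \ref{lem4.1}, the zero set ${\mathcal Z}(M_{\D})$ is a finite union $\bigcup_i \frac{p_i}{2}\mathbb{O}$ of half-integer multiples of the odd integers, so that via \eqref{EQ_Z} the measure $\mu_{\rho,\D}$ has a \emph{zero set of multi-Bernoulli structure} in the sense of the definition preceding Proposition \ref{prop2.1}.

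Once that structure is in place, I would apply Proposition \ref{prop2.1}: since $\mu_{\rho,\D}$ is assumed spectral it certainly possesses an infinite bi-zero set (a spectrum is infinite because $\mu$ is non-atomic, being a genuine infinite convolution), and hence $\rho=(n/m)^{1/r}$ with $r\ge 1$, $n$ odd and $m$ even. This already forces $\rho^{-1}$ to have even ``numerator'' in the appropriate sense. To upgrade this to $\rho=1/N$ with $N$ an integer, I would invoke the An--Wang theorem (Theorem \ref{lem8}): because ${\mathcal Z}(M_{\D})\subset \frac{1}{2}\mathbb{Z}\cdot\frac{1}{\gcd(\cdots)}$ lies in a lattice $\alpha\mathbb{Z}$ (each piece $\frac{p_i}{2}\mathbb{O}$ sits inside $\frac{1}{2\gcd(\cdots)}\mathbb{Z}$, and a common refinement of finitely many such lattices is again a lattice of the form $\alpha\mathbb{Z}$), the hypothesis ${\mathcal Z}(M_{\D})\subset\alpha\mathbb{Z}$ of Theorem \ref{lem8} is met, yielding $\rho^{-1}=N\in\mathbb{N}$. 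Combining $N=\rho^{-1}\in\mathbb{N}$ with $\rho=(n/m)^{1/r}$, $m$ even, I would conclude that $N^r=m/n$ forces $N$ to be even: an integer whose $r$-th power has even numerator over odd denominator cannot be odd.

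The step I expect to be the main obstacle is verifying cleanly that ${\mathcal Z}(M_{\D})$ really does lie in a single lattice $\alpha\mathbb{Z}$ so that Theorem \ref{lem8} is applicable, because in case (ii) of Lemma \ref{lem4.1} the zero set is a union of \emph{three} arithmetic families $\frac{1}{2\gcd(a,c-b)}\mathbb{O}$, $\frac{1}{2\gcd(c,b-a)}\mathbb{O}$, and $\frac{1}{2^{1+t}\gcd(\ell_1,\ell_2)}\mathbb{O}$ with different denominators; I must check their union is contained in $\frac{1}{D}\mathbb{Z}$ for a suitable common denominator $D$ and that this does not accidentally introduce a non-lattice obstruction. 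Since all the denominators are integers, taking $\alpha=1/\mathrm{lcm}$ of the three denominators gives the required lattice, so this is a finite bookkeeping check rather than a genuine difficulty. The more delicate point is ensuring the infinite bi-zero set hypothesis of Proposition \ref{prop2.1} is legitimately available; here I would note that a spectrum for any non-atomic probability measure is necessarily infinite, and a spectrum is in particular a bi-zero set by \eqref{bizero}, so spectrality of $\mu_{\rho,\D}$ immediately supplies the infinite bi-zero set required to trigger the $\rho=(n/m)^{1/r}$ conclusion.
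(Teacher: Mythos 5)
Your argument is correct, but the second half takes a genuinely different route from the paper. The first step coincides: both you and the paper observe that Lemma \ref{lem4.1} places ${\mathcal Z}(M_{\D})$ inside a lattice $\alpha\Z$ (your lcm bookkeeping is exactly what is needed), so Theorem \ref{lem8} gives $\rho^{-1}=N\in\N$. Where you diverge is in ruling out odd $N$. You route this through the multi-Bernoulli machinery: the zero set \eqref{EQ_Z} has the form $\bigcup_i \alpha_i\{\rho^{-k}a/2: a\in{\mathbb O},\,k\ge 1\}$, a spectrum is an infinite bi-zero set because $\mu_{\rho,\D}$ is non-atomic, and Proposition \ref{prop2.1} (hence the infinite Ramsey theorem plus the Hu--Lau theorem, Theorem \ref{lem7}) forces $\rho=(n/m)^{1/r}$ with $m$ even and $n$ odd; combined with $N^r=m/n\in\Z$ and $n$ odd, the factor of $2$ in $m$ survives and $N$ must be even. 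The paper instead uses a two-line elementary observation: if $N$ is odd then $N^k{\mathbb O}\subset{\mathbb O}$, so ${\mathcal Z}(\widehat{\mu}_{\rho,\D})=\bigcup_k N^k{\mathcal Z}(M_{\D})\subset{\mathcal Z}(M_{\D})$, whence every bi-zero set of $\mu_{\rho,\D}$ is a bi-zero set of the four-point measure $\delta_{\D}$ and is therefore finite --- contradicting spectrality directly, with no need for Ramsey-type arguments. Your approach buys uniformity with the framework already built in Section 2 (and in Case (1) of Section 3, where essentially this argument is used), at the cost of invoking considerably heavier tools; the paper's argument is shorter, self-contained, and exploits the specific arithmetic fact that multiplication by an odd integer preserves ${\mathbb O}$. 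One small point to make explicit if you keep your version: Proposition \ref{prop2.1} requires the multi-Bernoulli form as an \emph{equality} of zero sets, which does hold here because Lemma \ref{lem4.1} describes ${\mathcal Z}(M_{\D})$ exactly, not merely up to inclusion.
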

\begin{proof} By Lemma \ref{lem4.1}, The zero set ${\mathcal Z}(M_{\D})$ are all in rational numbers and hence they are all in a lattice. Therefore, An-Wang Theorem (Theorem \ref{lem8}) implies that $\rho = 1/N$ for some integer $N$.  We now show that $N$ cannot be odd. 

\medskip

If $N$ is odd, then $N^k {\mathbb O}\subset {\mathbb O}$.  it follows from Lemma \ref{lem4.1} that in any cases, we always have
\begin{eqnarray*}
{\mathcal Z}(\widehat{\mu}_{\rho, \D})=\bigcup_{k=1}^{\infty}N^k{\mathcal Z}(M_{\D})\subset {\mathcal Z}(M_{\D}).
\end{eqnarray*}
It implies that any bi-zero set  $\Lambda$  of $\mu_{\rho, \D}$ is also a bi-zero set of $\delta_{\D}$. So $\Lambda$ must be finite and hence can not a spectrum of $\mu_{\rho, \D}$. This contradicts to $\mu_{\rho, \D}$ is a spectral measure. So  $N$ is an even integer.
\end{proof}

\bigskip

In the following with the digit set given by (\ref{eqD_rational}) and $N = 2^{\beta}m$ where $\beta\ge 1$ and $m\in{\mathbb O}$, we will show that the two cases will not be a spectral measure. Once it proven, the necessity of Theorem \ref{main theo} will follow. 
 \begin{enumerate}
    \item $t_1\ne t_2$.
    \item $t_1 = t_2=\beta r$ for some integer $r\ge 1$. 
\end{enumerate}
For simplicity of writing, we will denote by $e_{\lambda}(x) = e^{2\pi i \lambda x}$ from now on. 
We will show that for all bi-zero set $\Lambda$ of $\mu_{N,\D}$, there exists non-zero function $F\in L^2(\mu_{N,\D})$ such that $\langle F, e_{\lambda}\rangle_{L^2(\mu)} = 0$ for all $\lambda\in \Lambda$. This shows that none of the bi-zero set is a complete set and hence there is no spectrum.

\medskip
\noindent \underline{\textbf{Case 1: {$t_1\neq t_2$}}}. According to Lemma \ref{lem4.1} (i), if we denote by $p_1$ and $p_2$ the numbers gcd$(a, c-b)$ and gcd$(c,b-a)$, which are odd integers, then we know that 
\begin{eqnarray*}
{\mathcal Z}(M_{\D})=\frac{1}{2p_1}{\mathbb O}\cup  \frac{1}{2p_2}{\mathbb O}
\end{eqnarray*}
and 
\begin{eqnarray}\label{eq-Zmu}
{\mathcal Z}(\widehat{\mu}_{\rho, \D})=\bigcup_{k=1}^{\infty}\left(\frac{N^k}{2p_1}{\mathbb O}\cup  \frac{N^k}{2p_2}{\mathbb O}\right). 
\end{eqnarray}
Let us denote also ${\mathcal Z}_k =\frac{N^k}{2p_1}{\mathbb O}\cup  \frac{N^k}{2p_2}{\mathbb O}$. When $k = 1$, we define ${\mathcal R}_1 = \frac{N}{2p_1}{\mathbb O}$ and ${\mathcal R}_2 = \frac{N}{2p_2}{\mathbb O}$.  We need some lemmas. 

\medskip

\begin{Lem}\label{lem-lam}
 Let $0\in\Lambda$ be a bi-zero set of $\mu$ and suppose that $\Lambda_1 : = \Lambda\cap {\mathcal Z}_1\ne \emptyset$.  then  the following two implications hold.
 $$
 \Lambda_1\cap ({\mathcal R}_1 \ \setminus \ {\mathcal R}_2)\ne\emptyset \ \Longrightarrow \ \Lambda_1\cap({\mathcal R}_2 \ \setminus  \ {\mathcal R}_1)=\emptyset .
 $$
 $$
 \Lambda_1\cap ({\mathcal R}_2 \ \setminus \ {\mathcal R}_1)\ne\emptyset \ \Longrightarrow \ \Lambda_1\cap({\mathcal R}_1 \ \setminus  \ {\mathcal R}_2)=\emptyset .
 $$
 Consequently, $\Lambda_1\subset {\mathcal R}_1$ or $\Lambda_1\subset {\mathcal R}_2$.

\end{Lem}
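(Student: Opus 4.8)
The plan is to prove both implications at once by establishing the following: if there existed $\lambda\in\Lambda_1\cap(\mathcal{R}_1\setminus\mathcal{R}_2)$ and $\lambda'\in\Lambda_1\cap(\mathcal{R}_2\setminus\mathcal{R}_1)$ simultaneously, then $\lambda-\lambda'\notin\mathcal{Z}(\widehat{\mu}_{\rho,\D})$. Since $0\in\Lambda$ and $\Lambda$ is a bi-zero set, we have $\lambda-\lambda'\in(\Lambda-\Lambda)\setminus\{0\}\subset\mathcal{Z}(\widehat{\mu}_{\rho,\D})$, so this would be a contradiction; it rules out the coexistence and yields the first implication, while the second follows verbatim after interchanging the roles of $\mathcal{R}_1$ and $\mathcal{R}_2$. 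The final assertion then follows by partitioning $\Lambda_1\subset\mathcal{R}_1\cup\mathcal{R}_2$ into the three blocks $\mathcal{R}_1\cap\mathcal{R}_2$, $\mathcal{R}_1\setminus\mathcal{R}_2$, $\mathcal{R}_2\setminus\mathcal{R}_1$ and observing that at most one of the last two is nonempty, which forces $\Lambda_1\subset\mathcal{R}_1$ or $\Lambda_1\subset\mathcal{R}_2$.

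The main tool I would use is the family of $\ell$-adic valuations on $\mathbb{Q}$: let $v_2$ be the $2$-adic valuation and, for each odd prime $q$, let $v_q$ be the $q$-adic valuation. Writing $N=2^{\beta}m$ with $m$ odd and recalling that $p_1,p_2$ are odd, one sees that $x=\frac{N}{2p_i}o$ for some $o\in{\mathbb O}$ exactly when $v_2(x)=\beta-1$ and $v_q(x)\ge v_q(m)-v_q(p_i)$ for every odd prime $q$; more generally, $x\in\frac{N^k}{2p_i}{\mathbb O}$ iff $v_2(x)=\beta k-1$ and $v_q(x)\ge k\,v_q(m)-v_q(p_i)$ for all odd $q$. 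Thus membership in $\mathcal{R}_1$, $\mathcal{R}_2$, and in each level of $\mathcal{Z}(\widehat{\mu}_{\rho,\D})$ is read off entirely from valuations. The hypotheses then supply witnesses: since $v_2(\lambda)=\beta-1$ already matches the $2$-adic requirement for $\mathcal{R}_2$, the failure $\lambda\notin\mathcal{R}_2$ must occur at an odd prime, i.e. there is an odd prime $q_1$ with $v_{q_1}(\lambda)<v_{q_1}(m)-v_{q_1}(p_2)$, and symmetrically an odd prime $q_2$ with $v_{q_2}(\lambda')<v_{q_2}(m)-v_{q_2}(p_1)$.

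The heart of the argument is a short valuation computation. Because $\lambda$ and $\lambda'$ both have $v_2=\beta-1$, their difference is $2^{\beta-1}$ times a difference of two odd-over-odd rationals, hence $v_2(\lambda-\lambda')\ge\beta$; so if $\lambda-\lambda'$ lay in the level-$k$ piece $\frac{N^k}{2p_i}{\mathbb O}$ (forcing $v_2=\beta k-1$) we would need $\beta k-1\ge\beta$, i.e. $k\ge2$. Now exploit the witnesses. At $q_1$ we have $v_{q_1}(\lambda)<v_{q_1}(m)-v_{q_1}(p_2)\le v_{q_1}(\lambda')$, so $v_{q_1}(\lambda-\lambda')=v_{q_1}(\lambda)\le v_{q_1}(m)-v_{q_1}(p_2)-1$; since $k\ge2$ and $v_{q_1}(m)\ge0$, this is strictly less than $k\,v_{q_1}(m)-v_{q_1}(p_2)$, excluding $\lambda-\lambda'\in\frac{N^k}{2p_2}{\mathbb O}$. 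Symmetrically, $q_2$ gives $v_{q_2}(\lambda-\lambda')=v_{q_2}(\lambda')\le v_{q_2}(m)-v_{q_2}(p_1)-1<k\,v_{q_2}(m)-v_{q_2}(p_1)$, excluding $\lambda-\lambda'\in\frac{N^k}{2p_1}{\mathbb O}$. As both digit choices $i=1,2$ are thus ruled out for every $k\ge2$, we get $\lambda-\lambda'\notin\mathcal{Z}(\widehat{\mu}_{\rho,\D})$, the desired contradiction.

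The step I expect to be the main obstacle is recognizing that the $2$-adic valuation alone is \emph{not} decisive: when $\beta=1$ the admissible exponents $\beta k-1$ fill all of $\{0,1,2,\dots\}$, so the jump $v_2(\lambda-\lambda')\ge\beta$ does not by itself keep the difference out of $\mathcal{Z}(\widehat{\mu}_{\rho,\D})$. Overcoming this requires isolating the correct odd-prime witnesses from the set-differences $\mathcal{R}_1\setminus\mathcal{R}_2$ and $\mathcal{R}_2\setminus\mathcal{R}_1$, and checking that, after the forced passage to level $k\ge2$, the lower bound inflates from $v_q(m)-v_q(p_i)$ to $k\,v_q(m)-v_q(p_i)$, which is incompatible with the valuation of $\lambda-\lambda'$ being pinned at its level-$1$ value. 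One should also confirm the borderline case $v_q(m)=0$, where the two digit bounds differ only through $v_q(p_i)$; the strict inequality persists there as well because $k\ge2$, so no separate treatment is needed.
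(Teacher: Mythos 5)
Your argument is correct and follows essentially the same route as the paper's proof: assume $\Lambda_1$ meets both relative complements, apply the bi-zero property to the difference of two such points, rule out the level $k=1$ piece of ${\mathcal Z}(\widehat{\mu})$ by a parity obstruction on the power of $2$, and rule out every level $k\ge 2$ by showing that membership there would force one of the two chosen points back into the set from which it was excluded. The only difference is in bookkeeping: the paper rearranges the explicit equations $\frac{N(1+2\alpha_1)}{2p_1}-\frac{N(1+2\alpha_2)}{2p_2}=\frac{N^{j}(1+2\alpha)}{2p_i}$ into odd-equals-even contradictions and direct substitutions, whereas you encode the same divisibility information in $2$-adic and odd-prime valuations with witness primes; both versions are complete.
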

\begin{proof} By symmetry of $i = 1$ and $2$, we only need to consider one of the implications. Suppose the implication  is not true. Then  there are elements  $\lambda_1, \lambda_2\in\Lambda$ such that
\begin{equation}\label{eq-1}
\lambda_1=\frac{N(1+2\alpha_1)}{2p_1}\in{\mathcal R}_1 \ \setminus \ {\mathcal R}_2
\end{equation}
 and \begin{equation}\label{eq-2}
 \lambda_2=\frac{N(1+2\alpha_2)}{2p_2}\in{\mathcal R}_2 \ \setminus \ {\mathcal R}_1
 \end{equation}
  where $\alpha_1,\alpha_2\in\Z$.
  
  The orthogonality of $\Lambda$ implies that $\lambda_1-\lambda_2\in {\mathcal Z}(\widehat{\mu}_{\rho,\D})$. We will show that indeed $\lambda_1-\lambda_2\in \bigcup_{k=2}^{\infty}\left(\frac{N^k}{2p_1}{\mathbb O}\cup  \frac{N^k}{2p_2}{\mathbb O}\right)$. i.e. $k=1$ is not possible. Suppose that if $k = 1$. Then we will have an equation for $\lambda_1 =\frac{N(1_2\alpha_1)}{2p_1}$, $\lambda_2 = \frac{N(1+2\alpha_2)}{2p_2}$ with $\alpha_1,\alpha_2\in{\mathbb Z}$, 
  $$
  \frac{N(1+2\alpha_1)}{2p_1}-\frac{N(1+2\alpha_2)}{2p_2} = \frac{N(1+2\alpha_3)}{2p_j}
  $$
  where $j = 1$ or $2$ and $\alpha_3\in{\mathbb Z}$. If $j = 1$, we rearrange the equation and obtain 
  $$
  p_1 (1+2\alpha_2) = 2p_2 (\alpha_1-\alpha_3). 
  $$
  The left hand side is an odd number but the right hand side is an even number. The same applies to $j = 2.$ Therefore, $k = 1$ is not possible. Hence, we have now
$$
\frac{N(1+2\alpha_1)}{2p_1}-\frac{N(1+2\alpha_2)}{2p_2}=\frac{N^{j}(1+2\alpha)}{2p_i}
$$
for some $j\geq 2$ where $i = 1$ or $2$. When $i = 1$, 
\begin{eqnarray*}
\frac{N(1+2\alpha_2)}{2p_2}&=&\frac{N(1+2\alpha_1)}{2p_1}-\frac{N^{j}(1+2\alpha)}{2p_1}\\
&=&\frac{N}{2p_1}(1+2\alpha_1-N^{j-1}(1+2\alpha))\\
&\in&\frac{N}{2p_1}{\mathbb O}.
\end{eqnarray*}
which contradicts to \eqref{eq-2}. Similarly, if $i = 2$, we will contradict \eqref{eq-1}. 

\medskip

As we know that $\Lambda_1\subset {\mathcal R}_1\cup{\mathcal R}_2 = ( {\mathcal R}_1 \ \setminus \ {\mathcal R}_2) \cup ( {\mathcal R}_1\cap{\mathcal R}_2) \cup ( {\mathcal R}_2 \ \setminus \ {\mathcal R}_1)$. The above implication shows that either one of the relative complements  is empty  or both are empty. This means that $\Lambda_1\subset {\mathcal R}_1$ or $\Lambda_1\subset {\mathcal R}_2$ holds.  This completes the proof of the lemma. 
\end{proof}

\medskip
Let use write $K(N,\D)$ to be the attractor of the IFS generated by $\mu_{N^{-1},\D}$. We know that the self-similar measure $\mu_{N^{-1}, \D}$ is supported on $K(N,\D)$, which satisfies the invariant equation.  
$$K(N, \D)=N^{-1}\D+N^{-1}K(N, D).$$

\begin{theorem}
If $t_1\neq t_2$, then $\mu_{N, \D}$ is not a spectral measure.
\end{theorem}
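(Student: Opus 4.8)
The plan is to fix an arbitrary bi-zero set $\Lambda$ of $\mu_{N,\D}$ (with $0\in\Lambda$, as we may by translation invariance) and manufacture a single nonzero $F\in L^2(\mu_{N,\D})$ with $\langle F,e_\lambda\rangle_{L^2(\mu)}=0$ for every $\lambda\in\Lambda$; since $\Lambda$ is arbitrary, this shows that no bi-zero set can be complete, so none is a spectrum. The function $F$ is produced at the first level of the self-similar structure. Writing $\phi_d(x)=N^{-1}(x+d)$ and using the self-similar identity $\mu_{N,\D}=\frac14\sum_{d\in\D}\mu_{N,\D}\circ\phi_d^{-1}$, each push-forward satisfies $\mu\circ\phi_d^{-1}\le 4\mu$, hence is absolutely continuous with bounded density. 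For a weight vector $(w_d)_{d\in\D}$ I set $\sigma=\sum_{d\in\D}w_d\,\mu\circ\phi_d^{-1}$ and $F=d\sigma/d\mu\in L^\infty(\mu)\subset L^2(\mu)$, so that $\langle F,e_\lambda\rangle_{L^2(\mu)}=\widehat{\sigma}(\lambda)$. A direct computation gives $\widehat{\sigma}(\xi)=W(\xi)\,\widehat{\mu}(N^{-1}\xi)$, where $W(\xi)=\sum_{d\in\D}w_d e^{-2\pi i N^{-1}d\xi}$.

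Next I would reduce the vanishing of $\widehat{\sigma}$ on $\Lambda$ to a finite condition. Since $\Lambda\setminus\{0\}\subset{\mathcal Z}(\widehat{\mu})=\bigcup_{k\ge1}{\mathcal Z}_k$ with ${\mathcal Z}_k=N^k{\mathcal Z}(M_{\D})$, for $\lambda\in{\mathcal Z}_k$ with $k\ge2$ we have $N^{-1}\lambda\in{\mathcal Z}_{k-1}\subset{\mathcal Z}(\widehat{\mu})$, so $\widehat{\mu}(N^{-1}\lambda)=0$ and $\widehat{\sigma}(\lambda)=0$ automatically, irrespective of $(w_d)$. At $\lambda=0$ one only needs $\sum_d w_d=0$. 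The sole genuine constraint is therefore on $\Lambda_1:={\Lambda}\cap{\mathcal Z}_1$, where $W(\lambda)=0$ is required. This is exactly where Lemma \ref{lem-lam} enters: it forces $\Lambda_1\subset{\mathcal R}_1$ or $\Lambda_1\subset{\mathcal R}_2$, so a single weight vector tuned to one branch will annihilate all of $\Lambda_1$ at once.

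I would then exhibit the weights explicitly. On ${\mathcal R}_1=\frac{N}{2p_1}{\mathbb O}$, the pairing underlying Lemma \ref{lem4.1}(i) gives, for $\xi=N^{-1}\lambda\in\frac{1}{2p_1}{\mathbb O}$, the relations $e^{-2\pi i a\xi}=-1$ and $e^{-2\pi i(c-b)\xi}=-1$ (using that $a/p_1$ and $(c-b)/p_1$ are odd integers), whence $e^{-2\pi i c\xi}=-e^{-2\pi i b\xi}$ and $W(\lambda)=(w_0-w_a)+(w_b-w_c)e^{-2\pi i b\xi}$. Choosing $(w_0,w_a,w_b,w_c)=(1,1,-1,-1)$ makes $W\equiv0$ on ${\mathcal R}_1$ while $\sum_d w_d=0$; the symmetric choice $(1,-1,-1,1)$, tied to the pairing behind $\frac1{2p_2}{\mathbb O}$, handles the case $\Lambda_1\subset{\mathcal R}_2$. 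With this choice $\widehat{\sigma}$ vanishes on all of $\Lambda$ by the three-way split above. To finish I would verify $F\ne0$, i.e. $\sigma\ne0$: since $\widehat{\sigma}(\xi)=W(\xi)\widehat{\mu}(N^{-1}\xi)$ and $\widehat{\mu}\not\equiv0$, it suffices that $W\not\equiv0$; but $W$ is a combination of the exponentials with the four distinct frequencies $0,a,b,c$, which are linearly independent, so indeed $W\not\equiv0$.

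The main obstacle, and the precise point at which $t_1\ne t_2$ is used, is the branch-separation step. It is Lemma \ref{lem-lam}---valid here because in the regime $t_1\ne t_2$ the set ${\mathcal Z}(M_{\D})$ has only the two branches $\frac1{2p_1}{\mathbb O}$ and $\frac1{2p_2}{\mathbb O}$ from Lemma \ref{lem4.1}(i)---that confines $\Lambda_1$ to a single ${\mathcal R}_i$ and thereby lets one weight vector annihilate it. When $t_1=t_2$ a third branch $\frac{1}{2^{1+t}\gcd(\ell_1,\ell_2)}{\mathbb O}$ appears, $\Lambda_1$ need no longer lie in one ${\mathcal R}_i$, and a single-vector annihilation fails; this is consistent with spectrality being possible in that case. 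The remaining steps (bounding the densities, the automatic vanishing for $k\ge2$, and the linear-independence argument for $\sigma\ne0$) are routine once the correct first-level combination $\sigma$ and the confinement of $\Lambda_1$ are in place.
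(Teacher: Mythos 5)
Your proposal is correct and follows essentially the same route as the paper: it invokes Lemma \ref{lem-lam} to confine $\Lambda\cap{\mathcal Z}_1$ to a single branch ${\mathcal R}_1$ or ${\mathcal R}_2$, builds a first-level function with weights $(1,1,-1,-1)$ (resp. $(1,-1,-1,1)$) on the cells indexed by $0,a,b,c$, and splits $\Lambda$ into $\{0\}$, $\bigcup_{k\ge2}{\mathcal Z}_k$ (handled by $\widehat{\mu}(N^{-1}\lambda)=0$), and ${\mathcal Z}_1$ (handled by the vanishing of the weight polynomial via $p_1=\gcd(a,c-b)$, resp. $p_2=\gcd(c,b-a)$), exactly as in the paper's proof where $F=f\ast{\mathcal X}_{N^{-1}K(N,\D)}$. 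Your Radon--Nikodym packaging $F=d\sigma/d\mu$ with $\sigma=\sum_d w_d\,\mu\circ\phi_d^{-1}$ is only a cosmetic (and slightly more careful) reformulation of the same construction.
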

\begin{proof}
Let $\Lambda$ be any bi-zero set of $\mu_{N, \D}$ and, without loss of generality, assume $0\in\Lambda$. The bi-zero property forces
$$\Lambda\setminus\{0\}\subset(\Lambda-\Lambda)\setminus\{0\}\subset\bigcup_{k=1}^{\infty}{\mathcal Z}_k.$$  Suppose $\Lambda \cap {\mathcal Z}_1\neq\emptyset$. By Lemma \ref{lem-lam}, either $\Lambda\cap {\mathcal Z}_1\subset {\mathcal R}_1$ or $\Lambda\cap {\mathcal Z}_1\subset {\mathcal R}_2$.   the two inclusions also vacuously  hold when $\Lambda \cap {\mathcal Z}_1=\emptyset$. Suppose the first case holds. Let 
$$f={\mathcal X}_{N^{-1}\{0, a\}}-{\mathcal X}_{N^{-1}\{b,c\}}\in L^{2}(\delta_{N^{-1}\D}),$$
and 
$$F=f\ast {\mathcal X}_{N^{-1}K(N, \D)}\in L^{2}(\mu_{N^{-1}, \D}).$$
Then for any $\lambda\in\Lambda$,  we have 
\begin{eqnarray*}
&&\langle F, e_{\lambda}\rangle_{L^2(\mu)}=\int_{K(N, \D)}F(x)e^{-2\pi i \lambda x}d\mu(x)\\
&=&\frac1{4}{\widehat{\mu}}(N^{-1}\lambda)\cdot\left(1+e^{-2\pi i \lambda N^{-1}a}-e^{-2\pi i \lambda N^{-1}b}-e^{-2\pi i \lambda N^{-1}c}\right).
\end{eqnarray*}
As $\mu_{N^{-1},\D} = \delta_{N^{-1}\D}\ast \mu_{>1}$ where $\mu_{>1} (\cdot) = \mu (N\cdot )$.  If $\lambda\in \Lambda\subset {\mathcal Z}(\widehat{\mu_{N^{-1},\D}})$, then we have 
$$
\lambda\in \bigcup_{k=2}^{\infty}{\mathcal Z}_k \ \mbox{or} \ {\mathcal Z}_1
$$
If $\lambda\in \bigcup_{k=2}^{\infty}{\mathcal Z}_k$, then $\widehat{\mu}(N^{-1}\lambda) = 0$. Also, the inner product is zero for $\lambda = 0$ since the trigonometric sum is zero at $\lambda = 0$. Hence, $\langle F, e_{\lambda}\rangle_{L^2(\mu)}=0$. 

\medskip

It remains to show that if $\lambda\in  {\mathcal Z}_1$, then $\langle F, e_{\lambda}\rangle_{L^2(\mu)}=0$ also. Now, we can write $\lambda = \frac{N}{2p_1}r$ where $r\in{\mathbb O}$ since it only intersects at $\frac{N}{2p_1}{\mathbb O}$. 
$$
\begin{aligned}
1+e^{-2\pi i \lambda N^{-1}a}-e^{-2\pi i \lambda N^{-1}b}-e^{-2\pi i \lambda N^{-1}c} = &1+e^{-\pi i \frac{a}{p_1}}-e^{-\pi i\frac{b}{p_1}}-e^{-2\pi i \frac{c}{p_1}}\\
 = & \left(1+e^{-\pi i \frac{a}{p_1}}\right)- e^{-\pi i \frac{b}{p_1}}\left(1+ e^{-\pi i \frac{c-b}{p_1}}\right)
\end{aligned}
$$
Recall that $p_1=\gcd(a, c-b)$ and $a, c-b, p_1$ are odd. $e^{-\pi i \frac{a}{p_1}} = e^{-\pi i \frac{c-b}{p_1}}=-1$, meaning the above trigonometric sum is zero. Hence, $\langle F, e_{\lambda}\rangle_{L^2(\mu)}=0$ as well. In the case that $\Lambda\cap {\mathcal Z}_1\subset {\mathcal R}_2$. We just let $f = {\mathcal X}_{N^{-1}\{0,c\}}-{\mathcal X}_{N^{-1}\{a,b\}}$. Recalling that $p_2 = \mbox{gcd}(c,b-a)$. The same argument applies. In all cases, for any bi-zero set $\Lambda$ of $\mu$,  we have found a non-zero $F\in L^2(\mu_{N^{-1},\D})$ such that $F$ is orthogonal to all $e_{\lambda}$. This means that none of such $E(\Lambda)$ is a complete set. We conclude the proof of the theorem. 
\end{proof}


\medskip

\noindent \underline{\textbf{Case 2: {$t_1= t_2 = \beta r$ for some integer $r\ge 1$}}}. The proof strategy is similar to the Case 1, but it is more complicated and requires a more careful writing. According to Lemma \ref{lem4.1} (ii), we denote by $p_1$, $p_2$ and $p_3$ the numbers gcd$(a, c-b)$, gcd$(c,b-a)$ and gcd$(\ell_1,\ell_2)$, which are odd integers, respectively. We know that 
\begin{eqnarray*}
{\mathcal Z}(M_{\D})=\frac{1}{2p_1}{\mathbb O}\cup  \frac{1}{2p_2}{\mathbb O}\cup \frac{1}{2^{1+t}p_3}{\mathbb O}
\end{eqnarray*}
and in the same way in case 1, 
\begin{equation}\label{eqZ_k}
{\mathcal Z}(\widehat{\mu}_{\rho, \D})=\bigcup_{k=1}^{\infty}{\mathcal Z}_k, \ \mbox{where} \ {\mathcal Z}_k = \frac{N^k}{2p_1}{\mathbb O}\cup  \frac{N^k}{2p_2}{\mathbb O}\cup \frac{N^k}{2^{1+t}p_3}{\mathbb O}.
\end{equation}
The crucial zero set we will look at is   ${\mathcal Z}_1\cup{\mathcal Z}_{r+1}$. To simplify notation, let us write 
$${\mathcal R}_1=\frac{N}{2p_1}{\mathbb O},\ {\mathcal R}_2=\frac{N}{2p_2}{\mathbb O},\  {\mathcal R}_3=\frac{N}{2^{1+t}p_3}{\mathbb O}.$$
$$
{\mathcal S}_1=\frac{N^{r+1}}{2p_1}{\mathbb O},\ {\mathcal S}_2=\frac{N^{r+1}}{2p_2}{\mathbb O},\  {\mathcal S}_3=\frac{N^{r+1}}{2^{1+t}p_3}{\mathbb O}.
$$
Hence, ${\mathcal Z}_1 = {\mathcal R}_1\cup {\mathcal R}_2\cup{\mathcal R}_3$ and ${\mathcal Z}_{r+1} = {\mathcal S}_1\cup {\mathcal S}_2\cup{\mathcal S}_3$.  As $t = \beta r$ and $N = 2^{\beta}m$, we have that 
$$
{\mathcal S}_3 = \frac{N}{2p_3} (m^r{\mathbb O}) \subset \frac{N}{2p_3} {\mathbb O}
$$
which has a similar structure as ${\mathcal R}_1$ and ${\mathcal R}_2$. 
\medskip

\begin{Lem}\label{lemma4.6}
Suppose $t_1=t_2=\beta r$ for some integer $r\geq1$ and $\gcd(\D)=1$, then $\gcd(p_1,p_2) = \gcd(p_1,p_3) =\gcd(p_2,p_3)=1$.
\end{Lem}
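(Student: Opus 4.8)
The plan is to reduce the claim to the single coprimality relation $\gcd(a,\ell_1,\ell_2)=1$ coming from $\gcd(\D)=1$, and then run a short prime-divisor argument. First I would record the parities forced by $\D=\{0,a,2^{t}\ell_1,a+2^{t}\ell_2\}$ with $t=t_1=t_2=\beta r$: since $a,\ell_1,\ell_2$ are odd, the numbers $a$, $b-a=2^{t}\ell_1-a$, $c=a+2^{t}\ell_2$ and $c-b=a+2^{t}(\ell_2-\ell_1)$ are all odd, so each of $p_1=\gcd(a,c-b)$, $p_2=\gcd(c,b-a)$ and $p_3=\gcd(\ell_1,\ell_2)$ is odd. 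Next I would translate the hypothesis $\gcd(\D)=\gcd(a,b,c)=1$: replacing $c$ by $c-a=2^{t}\ell_2$ gives $\gcd(a,b,c)=\gcd(a,2^{t}\ell_1,2^{t}\ell_2)$, and since $a$ is odd the power of $2$ cannot contribute, so this equals $\gcd(a,\ell_1,\ell_2)$. Thus $\gcd(a,\ell_1,\ell_2)=1$, which will be the only fact about $\D$ that I need.

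For the pairwise coprimality, I would argue by contradiction: suppose some prime $q$ divides two of $p_1,p_2,p_3$. By the parity observation $q$ is an odd prime. The goal in every case is to deduce $q\mid a$, $q\mid\ell_1$ and $q\mid\ell_2$ simultaneously, contradicting $\gcd(a,\ell_1,\ell_2)=1$. The translation dictionary is: $q\mid p_1$ gives $q\mid a$ and, using $c-b=a+2^{t}(\ell_2-\ell_1)$ together with $q$ odd, $q\mid(\ell_2-\ell_1)$; $q\mid p_2$ gives $q\mid c=a+2^{t}\ell_2$ and $q\mid(b-a)=2^{t}\ell_1-a$; and $q\mid p_3$ gives $q\mid\ell_1$ and $q\mid\ell_2$.

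Then I would dispatch the three pairs. If $q\mid p_1$ and $q\mid p_3$, then $q\mid a$ and $q\mid\ell_1,\ell_2$ directly. If $q\mid p_1$ and $q\mid p_2$, then $q\mid a$ together with $q\mid c=a+2^{t}\ell_2$ forces $q\mid\ell_2$ (as $q$ is odd), and then $q\mid(\ell_2-\ell_1)$ gives $q\mid\ell_1$. If $q\mid p_2$ and $q\mid p_3$, then $q\mid\ell_1,\ell_2$, and $q\mid\ell_2$ with $q\mid c=a+2^{t}\ell_2$ forces $q\mid a$. In each case $q$ divides $a,\ell_1,\ell_2$, the desired contradiction, so $\gcd(p_1,p_2)=\gcd(p_1,p_3)=\gcd(p_2,p_3)=1$.

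The argument is essentially careful bookkeeping with odd gcd's; the only point that needs a word of care is the repeated use of $q$ being odd to strip the factors $2^{t}$, since otherwise implications such as $q\mid 2^{t}\ell_2\Rightarrow q\mid\ell_2$ would fail. Notably the specific form $t=\beta r$ plays no role beyond guaranteeing $t_1=t_2$, which is precisely what places us in case (ii) of Lemma \ref{lem4.1} and makes $p_3=\gcd(\ell_1,\ell_2)$ well-defined; the coprimality itself holds for any common value $t_1=t_2$.
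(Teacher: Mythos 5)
Your proof is correct and follows essentially the same route as the paper's: both arguments show that a common (necessarily odd) divisor of two of the $p_i$ must divide all of the digits, contradicting $\gcd(\D)=1$; the paper phrases this directly as ``$\gcd(p_1,p_2)$ divides $a$, $c-b$ and $c$, hence divides $\gcd(\D)$,'' while you pass to a prime $q$ and to the equivalent condition $\gcd(a,\ell_1,\ell_2)=1$. Your version is more detailed (it writes out all three pairs and the parity bookkeeping that the paper leaves to ``the other cases are similar''), but there is no substantive difference in method.
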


\begin{proof}
Recall that $p_1=\gcd(a, c-b)$, $p_2=\gcd(c, b-a)$ and $2^tp_3=\gcd(b, c-a)$. Let $g_{1,2} = \gcd(p_1, p_2)$. Then $g_{1,2}$ divides $a, c-b$ and $c$ by the definition of $p_1$ and $p_2$. As $\gcd(\D)=1$, it forces that $g_{1,2} =\gcd(p_1,p_2) =1$. The proof of other cases are similar.
\end{proof}


\bigskip

\begin{Lem}\label{lem-lam2}
Suppose $t_1= t_2=\beta r$  for some integer $r\ge 1$. Let $0\in\Lambda$ be a bi-zero set of $\mu$, then
$$
\Lambda\cap \left(\left({\mathcal R}_1\cup {\mathcal R}_2 \right) \ \setminus  \ {\mathcal S}_3\right)
 =\emptyset \
\mbox{\rm  or } \ 
\Lambda\cap \left({\mathcal S}_3 \ \setminus \ \left({\mathcal R}_1\cup {\mathcal R}_2 \right) \right)
=\emptyset.
$$
\end{Lem}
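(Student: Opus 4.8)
The plan is to argue by contradiction. Suppose both intersections are nonempty, so that $\Lambda$ contains an element $\lambda_1\in({\mathcal R}_1\cup{\mathcal R}_2)\setminus{\mathcal S}_3$ and an element $\lambda_2\in{\mathcal S}_3\setminus({\mathcal R}_1\cup{\mathcal R}_2)$. Since $\Lambda$ is a bi-zero set, the orthogonality relation \eqref{bizero} forces $\lambda_1-\lambda_2\in{\mathcal Z}(\widehat{\mu}_{\rho,\D})=\bigcup_{k\ge1}{\mathcal Z}_k$, and the entire proof reduces to showing this membership is impossible. The first tool I would set up is the $2$-adic valuation, written $\nu_2$. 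Because $N=2^\beta m$ with $m$ odd, each of $\frac{N}{2p_1},\frac{N}{2p_2}$ and $\frac{Nm^r}{2p_3}$ equals $2^{\beta-1}$ times a ratio of two odd integers; hence both $\lambda_1$ and $\lambda_2$ are $2^{\beta-1}$ times an odd-over-odd rational, and their difference is $2^{\beta-1}$ times a difference of two such rationals, which is even. Consequently $\nu_2(\lambda_1-\lambda_2)\ge\beta$, and this valuation bound is what drives every subsequent case.

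This bound immediately disposes of $k=1$: the three families of ${\mathcal Z}_1={\mathcal R}_1\cup{\mathcal R}_2\cup{\mathcal R}_3$ have $2$-adic valuations $\beta-1$, $\beta-1$ and $(1-r)\beta-1$ respectively, all strictly below $\beta$ when $r\ge1$, so $\lambda_1-\lambda_2\notin{\mathcal Z}_1$. Therefore $\lambda_1-\lambda_2\in{\mathcal Z}_k$ for some $k\ge2$, and I would split according to which of the three families of ${\mathcal Z}_k$ contains it. Without loss of generality take $\lambda_1\in{\mathcal R}_1$, say $\lambda_1=\frac{N}{2p_1}u$ with $u$ odd; the case $\lambda_1\in{\mathcal R}_2$ is symmetric under $p_1\leftrightarrow p_2$.

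The two ``same-family'' cases are handled by the subtract/add trick already used in Lemma \ref{lem-lam}. If $\lambda_1-\lambda_2\in\frac{N^k}{2p_1}{\mathbb O}$, then $\lambda_2=\frac{N}{2p_1}\bigl(u-N^{k-1}w\bigr)$, whose argument is odd because $N^{k-1}w$ is even for $k\ge2$; thus $\lambda_2\in{\mathcal R}_1$, contradicting $\lambda_2\notin{\mathcal R}_1\cup{\mathcal R}_2$. If instead $\lambda_1-\lambda_2\in\frac{N^k}{2^{1+t}p_3}{\mathbb O}$, the valuation bound $\nu_2(\lambda_1-\lambda_2)\ge\beta$ together with $\nu_2$ of this family being $(k-r)\beta-1$ forces $k\ge r+2$; rewriting the family as $\frac{N^{k-r}m^r}{2p_3}{\mathbb O}$ and adding it to $\lambda_2\in{\mathcal S}_3$ gives $\lambda_1\in{\mathcal S}_3$, contradicting $\lambda_1\notin{\mathcal S}_3$.

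The remaining case, where $\lambda_1-\lambda_2\in\frac{N^k}{2p_2}{\mathbb O}$, is the genuine obstacle, since this family is foreign to both $\lambda_1$ (in the $p_1$-family) and $\lambda_2$ (in the $p_3$-family), so the naive trick fails. Here I would invoke the coprimality $\gcd(p_1,p_2)=\gcd(p_1,p_3)=1$ of Lemma \ref{lemma4.6}. Clearing denominators in $\frac{N}{2p_1}u-\frac{N^k}{2p_2}w=\frac{Nm^r}{2p_3}v$ yields $u\,p_2p_3-m^r v\,p_1p_2=N^{k-1}w\,p_1p_3$, and reducing modulo $p_1$ gives $p_1\mid u\,p_2p_3$, hence $p_1\mid u$ by coprimality. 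Writing $u=p_1u_1$ with $u_1$ odd turns $\lambda_1$ into $\frac{N}{2}u_1$, and then $\lambda_2=\frac{N}{2}u_1-\frac{N^k}{2p_2}w=\frac{N}{2p_2}\bigl(u_1p_2-N^{k-1}w\bigr)$, whose argument is odd (again $N^{k-1}w$ is even); so $\lambda_2\in{\mathcal R}_2$, once more contradicting $\lambda_2\notin{\mathcal R}_1\cup{\mathcal R}_2$. As every case is contradictory, the two intersections cannot both be nonempty, proving the lemma. I expect this cross term to be the crux: the two-step argument ``first deduce $p_1\mid u$ from Lemma \ref{lemma4.6}, then relocate $\lambda_2$ into ${\mathcal R}_2$'' is exactly the feature absent from the simpler two-family situation of Case 1.
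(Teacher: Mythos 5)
Your proof is correct and follows essentially the same route as the paper: argue by contradiction, split according to which family $\frac{N^k}{2p_1}{\mathbb O}$, $\frac{N^k}{2p_2}{\mathbb O}$ or $\frac{N^k}{2^{1+t}p_3}{\mathbb O}$ contains $\lambda_1-\lambda_2$, and use parity together with the coprimality of $p_1,p_2,p_3$ (Lemma \ref{lemma4.6}) to relocate $\lambda_1$ into ${\mathcal S}_3$ or $\lambda_2$ into ${\mathcal R}_1\cup{\mathcal R}_2$. The only cosmetic differences are that you package the parity constraints on $k$ as $2$-adic valuations and, in the cross-family case, you deduce $p_1\mid u$ and land $\lambda_2$ in ${\mathcal R}_2$, whereas the paper deduces $p_3\mid m^r(1+2\alpha_2)$ and lands $\lambda_2$ in $\frac{N}{2}{\mathbb O}\subset{\mathcal R}_1$ --- both reach the same contradiction.
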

\begin{proof} The lemma is trivial if $\Lambda$ does not intersect ${\mathcal R}_1\cup{\mathcal R}_2\cup{\mathcal S}_3$. We would therefore assume this intersection is non-empty.  Suppose the  conclusion is not true and  there are elements  $\lambda_1\in \Lambda\cap \left({\mathcal R}_1\cup {\mathcal R}_2 \right) \setminus {\mathcal S}_3$ and $\lambda_2\in\Lambda\cap{\mathcal S}_3\setminus \left({\mathcal R}_1\cup {\mathcal R}_2 \right) $. Writing $\lambda_1=\frac{N}{2p_i}(1+2\alpha_1)$ where $i=1$ or $2$ and $\lambda_2=\frac{N^{r+1}}{2^{1+t}p_3}(1+2\alpha_2)$ with $\alpha_1,\alpha_2\in\Z$. Suppose $\lambda_1-\lambda_2\in \bigcup_{k=1}^{\infty} {\mathcal Z}_k$. By (\ref{eqZ_k}), we have two cases.

\medskip

\noindent (i). Suppose that $\lambda_1-\lambda_2\in \frac{N^k}{2p_1}{\mathbb O}\cup\frac{N^k}{2p_2}{\mathbb O}$. Then for some $j = 1 $ or $2$, 
$$
\frac{N}{2p_i}(1+2\alpha_1)- \frac{N^{r+1}}{2^{1+t}p_3}(1+2\alpha_2) = \frac{N^k}{2p_j}(1+2\alpha').
$$
Rearranging the equation into integers, 
\begin{equation}\label{eq_rearrange1}
p_3\left(p_j(1+2\alpha_1)- N^{k-1}p_3p_i(1+2\alpha')\right) =p_ip_jm^r(1+2\alpha_2). 
\end{equation}
Examining the parity, it is only possible that $k>1$.  By Lemma \ref{lemma4.6}, $p_3$ divides $m^{r}(1+2\alpha_2)$. We now have 
$$
\lambda_2 = \frac{N^{r+1}}{2^{1+t}p_3}(1+2\alpha_2) = \frac{N}{2} \left(\frac{m^r(1+2\alpha_2)}{p_3}\right).
$$
As $p_3$ divides $m^{r}(1+2\alpha_2)$, the number in the parenthesis on the right is an odd integer, meaning that it belongs to ${\mathcal R}_1$, which is a contradiction since we picked  $\lambda_2\in {\mathcal S}_3\setminus ({\mathcal R}_1\cup {\mathcal R}_2)$. 

\medskip

\noindent (ii) Suppose now that  $\lambda_1-\lambda_2\in \frac{N^k}{2^{1+t}p_3}{\mathbb O}$, then 
\begin{equation}\label{eq4.7.4}
\frac{N}{2p_i}(1+2\alpha_1)- \frac{N^{1+r}}{2^{1+t}p_3}(1+2\alpha_2) = \frac{N^k}{2^{1+t}p_3}(1+2\alpha').
\end{equation}
Rearranging the equation into integers, 
\begin{equation}\label{eq_rearrange1}
p_3\left(1+2\alpha_1\right) =p_i\left(m^r(1+2\alpha_2)+2^{\beta(k-1-r)}m^{k-1}(1+2\alpha')\right). 
\end{equation}
Examining the parity, it is only possible that $k>r+1$. Then we can rewrite \eqref{eq4.7.4} as  
$$
\lambda_1 = \frac{N^{r+1}}{2^{1+t}p_3}(1+2\alpha_2) +\frac{N^k}{2^{1+t}p_3}(1+2\alpha')=\frac{N^{r+1}}{2^{1+t}p_3}\left(1+2\alpha_2+N^{k-r-1}(1+2\alpha')\right).
$$ 
As $k>r+1$, the number in the parenthesis on the right is an odd integer, meaning that it belongs to ${\mathcal S}_3$, which is a contradiction since we picked  $\lambda_1\in ({\mathcal R}_1\cup{\mathcal R}_2) \setminus {\mathcal S}_3$.  Hence, the lemma is proven. 
\end{proof}

\medskip

\begin{Lem}\label{lem-lam3}
Suppose $t_1= t_2=\beta r$  for some integer $r\ge 1$. Let $0\in\Lambda$ be a bi-zero set of $\mu$, then 
$$\Lambda\cap\left({\mathcal S}_1 \ \setminus   \ {\mathcal S}_2\right)=\emptyset \text{ or }\Lambda\cap\left({\mathcal S}_2 \ \setminus   \ {\mathcal S}_1\right)=\emptyset.$$
\end{Lem}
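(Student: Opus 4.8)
The plan is to run the argument of Lemma~\ref{lem-lam} one ``block'' higher, at level $r+1$ and for the pair ${\mathcal S}_1,{\mathcal S}_2$, while accounting for the extra third family of zeros (the ${\mathcal S}_3$-type terms $\frac{N^k}{2^{1+t}p_3}{\mathbb O}$) that was absent in Case~1. Suppose, for contradiction, that both relative complements meet $\Lambda$, so that there are
$$
\lambda_1=\frac{N^{r+1}}{2p_1}(1+2\alpha_1)\in\Lambda\cap({\mathcal S}_1\setminus{\mathcal S}_2),\qquad
\lambda_2=\frac{N^{r+1}}{2p_2}(1+2\alpha_2)\in\Lambda\cap({\mathcal S}_2\setminus{\mathcal S}_1),
$$
with $\alpha_1,\alpha_2\in\Z$. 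Since $\Lambda$ is a bi-zero set and $\lambda_1\ne\lambda_2$, orthogonality forces $\lambda_1-\lambda_2\in{\mathcal Z}(\widehat{\mu})=\bigcup_{k\ge1}{\mathcal Z}_k$, so by \eqref{eqZ_k} the difference lies in one of $\frac{N^k}{2p_1}{\mathbb O}$, $\frac{N^k}{2p_2}{\mathbb O}$, or $\frac{N^k}{2^{1+t}p_3}{\mathbb O}$ for some $k$. I would dispose of each family in turn, always deriving that $\lambda_2\in{\mathcal S}_1$ (or $\lambda_1\in{\mathcal S}_2$), contradicting the choice above.

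For the first two families the mechanism is identical to Lemma~\ref{lem-lam}. Writing $N=2^\beta m$ with $m$ odd, both $\lambda_1,\lambda_2$ have $2$-adic valuation $\beta(r+1)-1$, so their difference has valuation at least $\beta(r+1)-1$; an element of $\frac{N^k}{2p_j}{\mathbb O}$ has valuation exactly $\beta k-1$, forcing $k\ge r+1$. At $k=r+1$, clearing denominators in $\lambda_1-\lambda_2=\frac{N^{r+1}}{2p_j}(1+2\alpha')$ produces an equation with one side even and the other odd, which is impossible. For $k\ge r+2$ the factor $N^{k-r-1}$ is even, so solving $\lambda_2=\lambda_1-(\lambda_1-\lambda_2)$ leaves an odd numerator over $\frac{N^{r+1}}{2p_1}$ (when $j=1$), i.e.\ $\lambda_2\in{\mathcal S}_1$; the case $j=2$ symmetrically gives $\lambda_1\in{\mathcal S}_2$.

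The genuinely new case is the third family, and this is where I expect the main work. Using $t=\beta r$ one checks that an element of $\frac{N^k}{2^{1+t}p_3}{\mathbb O}$ has valuation $\beta(k-r)-1$, which forces $k\ge 2r+1$. Setting $s=k-2r-1\ge0$ and solving for $\lambda_2$ gives
$$
\lambda_2=\frac{N^{r+1}}{2p_1p_3}\,B,\qquad B=p_3(1+2\alpha_1)-p_1\,2^{\beta s}m^{k-r-1}(1+2\alpha'),
$$
together with the integer identity $p_2B=p_1p_3(1+2\alpha_2)$. When $s=0$ the quantity $B$ is even while $p_1p_3(1+2\alpha_2)$ is odd, a contradiction. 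When $s\ge1$ the quantity $B$ is odd, and since $\gcd(p_2,p_1p_3)=1$ by Lemma~\ref{lemma4.6}, we obtain $p_2\mid(1+2\alpha_2)$, say $1+2\alpha_2=p_2u$ with $u$ odd, whence $B=p_1p_3u$ and $\lambda_2=\frac{N^{r+1}}{2}u\in{\mathcal S}_1$.

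In every case $\lambda_2\in{\mathcal S}_1$, contradicting $\lambda_2\in{\mathcal S}_2\setminus{\mathcal S}_1$, which proves the lemma. The hard part is precisely this third family: the interplay of the extra power $2^{1+t}=2^{1+\beta r}$ in the denominator with $N=2^\beta m$ is what makes the valuation bound land exactly at $k=2r+1$, forcing a parity obstruction at $s=0$, and for $s\ge1$ it is the coprimality of $p_1,p_2,p_3$ from Lemma~\ref{lemma4.6} that collapses $\lambda_2$ into ${\mathcal S}_1$. Keeping track of which power of $2$ survives at each level is the only delicate bookkeeping; everything else is the two-family argument of Lemma~\ref{lem-lam} transplanted to level $r+1$.
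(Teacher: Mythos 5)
Your proposal is correct and follows essentially the same route as the paper: assume both relative complements meet $\Lambda$, examine which family of ${\mathcal Z}_k$ the difference $\lambda_1-\lambda_2$ falls into, pin down $k$ by a $2$-adic valuation/parity count, and then show that one of the two points is forced back into the other set ${\mathcal S}_i$, a contradiction. The only differences are cosmetic — in the third family you conclude $p_2\mid(1+2\alpha_2)$ and hence $\lambda_2\in{\mathcal S}_1$, where the paper symmetrically concludes $p_1\mid(1+2\alpha_1)$ and $\lambda_1\in{\mathcal S}_2$, and your tracking of the exponent $\beta(k-2r-1)$ is in fact slightly cleaner than the paper's displayed rearrangement.
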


\medskip

\begin{proof} Similar to Lemma \ref{lem-lam2}, the lemma is trivial if $\Lambda\cap ({\mathcal S}_1\cup{\mathcal S}_2) = \emptyset$. Hence, we assume the intersection is non-empty. Suppose the conclusion is not true. Then  there are elements  $\lambda_1, \lambda_2\in\Lambda$ such that
\begin{equation*}\label{eq-1}
\lambda_1=\frac{N^{r+1}}{2p_1}(1+2\alpha_1)\in {\mathcal S}_1 \ \setminus   \ {\mathcal S}_2
\end{equation*}
 and \begin{equation*}\label{eq-2}
 \lambda_2=\frac{N^{r+1}}{2p_2}(1+2\alpha_2)\in {\mathcal S}_2 \ \setminus   \ {\mathcal S}_1
 \end{equation*}
  where $\alpha_1,\alpha_2\in\Z$.  Using the bizero set property, we know that $\lambda_1-\lambda_2\in\bigcup_{k = 1}^{\infty} {\mathcal Z}_k$.  If $\lambda_1-\lambda_2\in \frac{N^k}{2p_1}{\mathbb O}$, then 
\begin{equation}\label{eq4.7.5}
\frac{N^{r+1}}{2p_1}(1+2\alpha_1)- \frac{N^{1+r}}{2p_2}(1+2\alpha_2) = \frac{N^k}{2p_1}(1+2\alpha').
\end{equation}
After some arrangement, we have 
\begin{equation}\label{eq_rearrange1}
p_1\left(1+2\alpha_2\right) =p_2\left(1+2\alpha_2-2^{\beta(k-1-r)}m^{k-r-1}(1+2\alpha')\right). 
\end{equation}
Examining the parity, it is only possible that $k>r+1$. Then we can rewrite \eqref{eq4.7.5} as  
$$
\lambda_2 = \frac{N^{r+1}}{2p_1}(1+2\alpha_2) -\frac{N^k}{2p_1}(1+2\alpha')=\frac{N^{r+1}}{2p_1}\left(1+2\alpha_2-N^{k-r-1}(1+2\alpha')\right).
$$ 
As $k>r+1$, the number in the parenthesis on the right is an odd integer, meaning that it belongs to $\frac{N^{r+1}}{2p_1}{\mathbb O}$, which is a contradiction since we picked  $\lambda_2\in {\mathcal S}_2\setminus{\mathcal S_1}$. The same applied to the case that $\lambda_1-\lambda_2\in \frac{N^k}{2p_2}{\mathbb O}$.

\bigskip

Suppose now that 
\begin{equation}\label{eq4.7.6}
\frac{N^{r+1}}{2p_1}(1+2\alpha_1)- \frac{N^{r+1}}{2p_2}(1+2\alpha_2) = \frac{N^k}{2^{1+t}p_3}(1+2\alpha')\in \frac{N^k}{2^{1+t}p_3}{\mathbb O}.
\end{equation}
We now recall that $t = \beta r$, $N = 2^{\beta}m$ and rearrange the equation
$$
 p_2p_3(1+2{\alpha}_1)=p_1 \left(p_3(1+2\alpha_2)+2^{\beta(k-r-1)}m^{k-r-1}p_2(1+2\alpha')\right).
$$
Examining the parity again, it is only possible that $k>r+1$.
By Lemma \ref{lemma4.6}, $p_1$ divides $1+2\alpha_1$. We now write (\ref{eq4.7.6}) as 
$$
\lambda_1 = \frac{N^{r+1}}{2p_2}\left(\frac{p_2(1+2\alpha_1)}{p_1}\right). 
$$
As $p_1$ divides $1+2\alpha_1$, the number in the parenthesis on the right is an odd integer, meaning that it belongs to ${\mathcal S}_2$, which is a contradiction since we picked  $\lambda_1\in {\mathcal S}_1\setminus{\mathcal S}_2$.  We have thus proved the lemma.
\end{proof}

\bigskip

\begin{Lem}\label{lem-case2}
Suppose that $\Lambda$ is a bi-zero set. Consider  the following statements:
\begin{enumerate}
\item[(I)] $\Lambda\cap\left({\mathcal Z}_1 \ \setminus \ \bigcup_{k=2}^{\infty}{\mathcal Z}_k\right)\subset {\mathcal R}_3$;
    \item[(II)]$\Lambda\cap\left({\mathcal Z}_{r+1} \ \setminus \ \bigcup_{k\neq r+1}{\mathcal Z}_k\right)\subset {\mathcal S}_1$; 
    \item [(III)] $\Lambda\cap\left({\mathcal Z}_{r+1} \ \setminus \ \bigcup_{k\neq r+1}{\mathcal Z}_k\right)\subset {\mathcal S}_2$.
\end{enumerate}
Then the statement $(I)$ or $(II)$ or $(III)$ holds. 
\end{Lem}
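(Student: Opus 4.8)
The plan is to deduce the trichotomy purely set-theoretically from the two dichotomy lemmas already established, Lemmas \ref{lem-lam2} and \ref{lem-lam3}, with no further parity or gcd computation. The starting point is Lemma \ref{lem-lam2}, which hands us one of two alternatives for the bi-zero set $\Lambda$: either (a) $\Lambda\cap\big(({\mathcal R}_1\cup{\mathcal R}_2)\setminus{\mathcal S}_3\big)=\emptyset$, or (b) $\Lambda\cap\big({\mathcal S}_3\setminus({\mathcal R}_1\cup{\mathcal R}_2)\big)=\emptyset$. I will show that alternative (a) forces statement (I), while alternative (b), combined with Lemma \ref{lem-lam3}, forces statement (II) or (III). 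Since one of (a), (b) always holds, this yields the claim.

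For the first branch, suppose (a) holds and pick any $\lambda\in\Lambda\cap\big({\mathcal Z}_1\setminus\bigcup_{k\ge2}{\mathcal Z}_k\big)$. Then $\lambda\in{\mathcal Z}_1={\mathcal R}_1\cup{\mathcal R}_2\cup{\mathcal R}_3$, and since $\lambda\notin\bigcup_{k\ge2}{\mathcal Z}_k$ while ${\mathcal S}_3\subset{\mathcal Z}_{r+1}$ with $r+1\ge2$, we get $\lambda\notin{\mathcal S}_3$. Were $\lambda\in{\mathcal R}_1\cup{\mathcal R}_2$, then $\lambda$ would lie in $({\mathcal R}_1\cup{\mathcal R}_2)\setminus{\mathcal S}_3$, contradicting (a); hence $\lambda\in{\mathcal R}_3$. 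As $\lambda$ was arbitrary, this is exactly statement (I).

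For the second branch, suppose (b) holds. Lemma \ref{lem-lam3} supplies a further dichotomy; by symmetry I treat the case $\Lambda\cap({\mathcal S}_1\setminus{\mathcal S}_2)=\emptyset$ and aim at (III), the other case giving (II) after interchanging the roles of ${\mathcal S}_1$ and ${\mathcal S}_2$. Pick $\lambda\in\Lambda\cap\big({\mathcal Z}_{r+1}\setminus\bigcup_{k\ne r+1}{\mathcal Z}_k\big)$, so that $\lambda\in{\mathcal S}_1\cup{\mathcal S}_2\cup{\mathcal S}_3$ and $\lambda\notin{\mathcal Z}_1\supset{\mathcal R}_1\cup{\mathcal R}_2$ (using $1\ne r+1$). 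If $\lambda\in{\mathcal S}_3$, then (b) forces $\lambda\in{\mathcal R}_1\cup{\mathcal R}_2\subset{\mathcal Z}_1$, contradicting $\lambda\notin{\mathcal Z}_1$; so $\lambda\in{\mathcal S}_1\cup{\mathcal S}_2$. Finally $\Lambda\cap({\mathcal S}_1\setminus{\mathcal S}_2)=\emptyset$ rules out $\lambda\in{\mathcal S}_1\setminus{\mathcal S}_2$, leaving $\lambda\in{\mathcal S}_2$, which is (III).

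The argument is essentially bookkeeping, so the only real subtlety, and the step I would be most careful about, is checking that the two ``removed'' unions in the definitions of the sets appearing in (I) and (II)/(III) perform precisely the cancellation the proof needs: removing $\bigcup_{k\ge2}{\mathcal Z}_k$ in (I) is what excises the ${\mathcal S}_3$-part, so that (a) can be applied, and removing $\bigcup_{k\ne r+1}{\mathcal Z}_k$ in (II)/(III) is what forces $\lambda\notin{\mathcal Z}_1$ and hence, via (b), kills the ${\mathcal S}_3$ possibility. The facts that make both exclusions fire are the definitional containments ${\mathcal S}_3\subset{\mathcal Z}_{r+1}$ and ${\mathcal R}_1\cup{\mathcal R}_2\subset{\mathcal Z}_1$ together with $r\ge1$ (so that $r+1\ge2$ and $r+1\ne1$), and I expect no input beyond Lemmas \ref{lem-lam2} and \ref{lem-lam3} to be required.
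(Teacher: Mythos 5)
Your proof is correct and follows essentially the same route as the paper: both arguments derive the trichotomy purely set-theoretically from the dichotomies of Lemmas \ref{lem-lam2} and \ref{lem-lam3}, using the containments ${\mathcal S}_3\subset{\mathcal Z}_{r+1}$ and ${\mathcal R}_1\cup{\mathcal R}_2\subset{\mathcal Z}_1$ together with $r\ge 1$. Your version is in fact slightly tidier, since the paper first disposes of some preliminary empty-intersection cases that are already subsumed by the unconditional form of those two lemmas.
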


\begin{proof} We first note that if $\Lambda\cap ({\mathcal Z}_1\cup{\mathcal Z}_{r+1}) = \emptyset$, then the statements (I)-(III) are all true vacuously as the left hand side are all empty sets.  Hence, we may assume that $\Lambda\cap ({\mathcal Z}_1\cup{\mathcal Z}_{r+1}) \ne \emptyset$. Note that  ${\mathcal Z}_1\cup{\mathcal Z}_{r+1}$ is a union of 6 sets.    If $\Lambda\cap ({\mathcal R}_1\cup{\mathcal R}_2) = \emptyset$, then (I) holds because the left hand side of (I) is a subset of
$$
\Lambda \cap {\mathcal Z}_1 =\Lambda\cap\left({\mathcal R}_1\cup{\mathcal R}_2\cup{\mathcal R}_3\right)= \Lambda\cap{\mathcal R}_3\subset {\mathcal R}_3.
$$
Similarly, if $\Lambda\cap {\mathcal S}_3 = \emptyset$, then $\Lambda\cap {\mathcal Z}_{r+1} = \Lambda\cap ({\mathcal S}_1\cup{\mathcal S}_2)$. By Lemma \ref{lem-lam3}, $\Lambda\cap {\mathcal Z}_{r+1}\subset {\mathcal S}_1$ or $\Lambda\cap {\mathcal Z}_{r+1}\subset {\mathcal S}_2$. Therefore, (II) or (III) holds. 

\medskip

It remains to see the case that $\Lambda\cap ({\mathcal R}_1\cup{\mathcal R}_2\cup{\mathcal S}_3)\ne \emptyset.$ We then apply Lemma \ref{lem-lam2} implies that 
$$
\Lambda\cap (({\mathcal R}_1\cup{\mathcal R}_2) \ \setminus  \ {\mathcal S}_3) =  \emptyset \  \mbox{or} \  \Lambda\cap ({\mathcal S}_3 \ \setminus  \ ({\mathcal R}_1\cup{\mathcal R}_2) )=  \emptyset.
$$
\medskip
Suppose that 
$$
\Lambda\cap \left({\mathcal R}_1\cup {\mathcal R}_2 \right) \setminus {\mathcal S}_3=\emptyset.
$$
Then 
$$
\Lambda\cap\left({\mathcal Z}_1 \ \setminus \ \bigcup_{k=2}^{\infty}{\mathcal Z}_k\right)\subset\Lambda\cap \left(({\mathcal R_3}\cup {\mathcal R}_1\cup {\mathcal R}_2)  \ \setminus \ {\mathcal S}_3 \right)\subset{\mathcal R}_3.
$$
If we have now that 
$\Lambda\cap \left({\mathcal S}_3 \ \setminus\  \left({\mathcal R}_1\cup {\mathcal R}_2 \right)\right) =\emptyset,
$
then 
\begin{eqnarray*}
\Lambda\cap\left({\mathcal Z}_{r+1} \ \setminus \ \bigcup_{k\ne r+1}{\mathcal Z}_k\right)&\subset&\Lambda\cap\left(\left({\mathcal S}_1\cup{\mathcal S}_2\cup{\mathcal S}_3\right)\setminus ({\mathcal R}_1\cup {\mathcal R}_2)   \right)\\
&\subset&\Lambda\cap\left({\mathcal S}_1\cup{\mathcal S}_2\right).
\end{eqnarray*}
Using Lemma \ref{lem-lam3}, (II) or (III) holds. This completes the proof. 
\end{proof}


\bigskip

\bigskip
We know that the self-similar measure $\mu_{N^{-1}, \D}$ is supported on $K(N,\D)$, which satisfies the equations.
$$K(N, \D)=N^{-1}\D+N^{-1}K(N, D)=N^{-r-1}\D+\widetilde{K}$$
where $\widetilde{K}=\sum_{k\neq r+1}N^{-k}\D.$
\bigskip

\begin{theorem}
Suppose $t=t'=\beta r$ for some integer $r\geq1$, then $\mu_{N^{-1},\D}$ is not a spectral measure.
\end{theorem}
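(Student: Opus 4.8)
The plan is to show that no bi-zero set of $\mu:=\mu_{N^{-1},\D}$ can be a spectrum by producing, for each bi-zero set $\Lambda$ (normalized so that $0\in\Lambda$), a nonzero $F\in L^2(\mu)$ with $\langle F,e_\lambda\rangle_{L^2(\mu)}=0$ for every $\lambda\in\Lambda$. This forces $E(\Lambda)$ to be incomplete, so $\Lambda$ is not a spectrum; as $\Lambda$ is arbitrary, $\mu$ is not spectral. The entry point is Lemma \ref{lem-case2}, which guarantees that one of the alternatives (I), (II) or (III) holds. Each alternative is treated by the mechanism of the $t_1\neq t_2$ case: split off one discrete factor of the infinite convolution and choose a sign pattern on $\D$ that annihilates the part of the zero set left uncovered by that factor.

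Case (I) is handled at the first level. Writing $\mu=\delta_{N^{-1}\D}\ast\mu_{>1}$ with $\widehat{\mu_{>1}}(\lambda)=\widehat{\mu}(N^{-1}\lambda)$, I set $f={\mathcal X}_{N^{-1}\{0,b\}}-{\mathcal X}_{N^{-1}\{a,c\}}$ and $F=f\ast{\mathcal X}_{N^{-1}K(N,\D)}$, so that
$$\langle F,e_\lambda\rangle_{L^2(\mu)}=\frac14\,\widehat{\mu}(N^{-1}\lambda)\Big(\big(1+e^{-2\pi i\lambda N^{-1}b}\big)-e^{-2\pi i\lambda N^{-1}a}\big(1+e^{-2\pi i\lambda N^{-1}(c-a)}\big)\Big).$$
For $\lambda\in\bigcup_{k\ge2}{\mathcal Z}_k$ the factor $\widehat{\mu}(N^{-1}\lambda)$ vanishes, and for $\lambda=0$ the trigonometric factor vanishes since $\int f\,d\delta_{N^{-1}\D}=0$. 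By alternative (I), the only remaining elements of $\Lambda\cap{\mathcal Z}_1$ lie in ${\mathcal R}_3=\frac{N}{2^{1+t}p_3}{\mathbb O}$; writing there $\lambda=\frac{N}{2^{1+t}p_3}s$ with $s\in{\mathbb O}$ and using $2^tp_3=\gcd(b,c-a)$ (so $\ell_1/p_3,\ell_2/p_3\in{\mathbb O}$), both grouped sums $1+e^{-2\pi i\lambda N^{-1}b}$ and $1+e^{-2\pi i\lambda N^{-1}(c-a)}$ reduce to $1+(-1)=0$. Hence $\langle F,e_\lambda\rangle=0$ for all $\lambda\in\Lambda$.

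Cases (II) and (III) are handled at level $r+1$, which is the key departure from the previous case. Here I use $\mu=\delta_{N^{-r-1}\D}\ast\widetilde\mu$, where $\widetilde\mu=\Asterisk_{k\neq r+1}\delta_{N^{-k}\D}$ is supported on $\widetilde K$ and satisfies ${\mathcal Z}(\widehat{\widetilde\mu})=\bigcup_{k\neq r+1}{\mathcal Z}_k$. Setting $F=f\ast{\mathcal X}_{\widetilde K}$ gives $\langle F,e_\lambda\rangle=\frac14\widehat{\widetilde\mu}(\lambda)\,T(\lambda)$ with $T(\lambda)=\sum_{d\in\D}\epsilon_d e^{-2\pi i\lambda N^{-r-1}d}$. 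For $\lambda\in\bigcup_{k\neq r+1}{\mathcal Z}_k$ the factor $\widehat{\widetilde\mu}(\lambda)$ vanishes, so only $\Lambda\cap({\mathcal Z}_{r+1}\setminus\bigcup_{k\neq r+1}{\mathcal Z}_k)$ survives. In Case (II) this set lies in ${\mathcal S}_1=\frac{N^{r+1}}{2p_1}{\mathbb O}$, so I take $f={\mathcal X}_{N^{-r-1}\{0,a\}}-{\mathcal X}_{N^{-r-1}\{b,c\}}$; at $\lambda=\frac{N^{r+1}}{2p_1}s$ one has $\lambda N^{-r-1}=\frac{s}{2p_1}$, and using $p_1=\gcd(a,c-b)$ the grouping $\big(1+e^{-\pi i sa/p_1}\big)-e^{-\pi i sb/p_1}\big(1+e^{-\pi i s(c-b)/p_1}\big)$ vanishes. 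In Case (III) the set lies in ${\mathcal S}_2=\frac{N^{r+1}}{2p_2}{\mathbb O}$, and the symmetric choice $f={\mathcal X}_{N^{-r-1}\{0,c\}}-{\mathcal X}_{N^{-r-1}\{a,b\}}$ together with $p_2=\gcd(c,b-a)$ kills $T$ the same way.

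Finally, $F\neq0$ in each case: the identities above hold for all $\gamma$, not just $\gamma\in\Lambda$, and both the zero set of the exponential sum $T$ and the zero set $\widehat{\widetilde\mu}(\gamma)=0$ (resp. $\widehat{\mu}(N^{-1}\gamma)=0$) are of Lebesgue measure zero, so I may choose $\gamma$ avoiding both and obtain $\langle F,e_\gamma\rangle\neq0$, whence $F\neq0$ in $L^2(\mu)$. I expect the most delicate part to be the bookkeeping that correctly matches each surviving residue class ${\mathcal R}_3,{\mathcal S}_1,{\mathcal S}_2$ to the gcd $p_3,p_1,p_2$ forcing the corresponding grouping to cancel. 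The genuinely new idea relative to the $t_1\neq t_2$ case is splitting off the $(r+1)$-st factor rather than the first: this is exactly what makes $\widehat{\widetilde\mu}$ vanish off ${\mathcal Z}_{r+1}$ while still leaving room to cancel $T$ on ${\mathcal S}_1$ and ${\mathcal S}_2$.
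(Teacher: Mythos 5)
Your proposal is correct and follows essentially the same route as the paper: invoke Lemma \ref{lem-case2}, then in case (I) use $f={\mathcal X}_{N^{-1}\{0,b\}}-{\mathcal X}_{N^{-1}\{a,c\}}$ convolved with ${\mathcal X}_{N^{-1}K(N,\D)}$, and in cases (II)/(III) split off the $(r+1)$-st factor and use $f_2$, $f_3$ convolved with ${\mathcal X}_{\widetilde K}$, exactly as in the paper. Your additional explicit verification that $F\neq 0$ is a welcome detail the paper leaves implicit, but the argument is the same.
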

\begin{proof} Let $\Lambda$ be any bi-zero set of $\mu_{N, \D}$ and, without loss of generality, assume $0\in\Lambda$. The bi-zero property forces
$$\Lambda\setminus\{0\}\subset(\Lambda-\Lambda)\setminus\{0\}\subset\bigcup_{k=1}^{\infty}{\mathcal Z}_k.$$
By Lemma \ref{lem-case2},  we have (I) or (II) or (III) holds. 

\bigskip

  Suppose that (I) holds, i.e.  $\Lambda\cap\left({\mathcal Z}_1 \ \setminus \ \bigcup_{k=2}^{\infty}{\mathcal Z}_k\right)\subset {\mathcal R}_3$.  Let 
$$f={\mathcal X}_{N^{-1}\{0, b\}}-{\mathcal X}_{N^{-1}\{a,c\}}\in L^{2}(\delta_{N^{-1}\D}),$$
and 
$$F=f\ast {\mathcal X}_{N^{-1}K(N, \D)}\in L^{2}(\mu_{N^{-1}, \D}).$$
Then for any $\lambda\in\Lambda$,  we have 
\begin{eqnarray*}
&&\langle F, e_{\lambda}\rangle_{L^2(\mu)}=\int_{K(N, \D)}F(x)e^{-2\pi i \lambda x}d\mu(x)\\
&=&\frac1{4}{\widehat{\mu}}(N^{-1}\lambda)\cdot\left(1+e^{-2\pi i \lambda N^{-1}a}-e^{-2\pi i \lambda N^{-1}b}-e^{-2\pi i \lambda N^{-1}c}\right).
\end{eqnarray*}
As $\mu_{N^{-1},\D} = \delta_{N^{-1}\D}\ast \mu_{>1}$ where $\mu_{>1} (\cdot) = \mu (N\cdot )$.  If $\lambda\in \Lambda\subset {\mathcal Z}(\widehat{\mu_{N^{-1},\D}})$, then we have 
$$
\lambda\in \bigcup_{k=2}^{\infty}{\mathcal Z}_k \ \mbox{or} \ {\mathcal Z}_1 \ \setminus \ \bigcup_{k=2}^{\infty}{\mathcal Z}_k
$$
If $\lambda\in \bigcup_{k=2}^{\infty}{\mathcal Z}_k$, then $\widehat{\mu}(N^{-1}\lambda) = 0$. Also, the inner product is zero for $\lambda = 0$ since the trigonometric sum is zero at $\lambda = 0$. Hence, $\langle F, e_{\lambda}\rangle_{L^2(\mu)}=0$. 

\medskip

It remains to show that if $\lambda\in  {\mathcal Z}_1 \ \setminus \ \bigcup_{k=2}^{\infty}{\mathcal Z}_k$, then $\langle F, e_{\lambda}\rangle_{L^2(\mu)}=0$ also. Now, we can write $\lambda = \frac{N}{2^{1+t}p_3}r$ where $r\in{\mathbb O}$ since it is a subset of ${\mathcal R}_3$ by Lemma \ref{lem-case2}. We have
$$
\begin{aligned}
1+e^{-2\pi i \lambda N^{-1}b}-e^{-2\pi i \lambda N^{-1}a}-e^{-2\pi i \lambda N^{-1}c} = &1+e^{-\pi i \frac{a}{2^{1+t}p_3}}-e^{-\pi i\frac{b}{2^{1+t}p_3}}-e^{-2\pi i \frac{c}{2^{1+t}p_3}}\\
 = & \left(1+e^{-\pi i \frac{b}{2^{1+t}p_3}}\right)- e^{-\pi i \frac{a}{2^{1+t}p_3}}\left(1+ e^{-\pi i \frac{c-a}{2^{1+t}p_3}}\right)
\end{aligned}
$$
Recall that $b=2^t\ell, c=a+2^t\ell'$ and $p_3=\gcd(\ell, \ell')$ where  $\ell, \ell', p_3$ are odd. $e^{-\pi i \frac{b}{2^{1+t}p_3}} = e^{-\pi i \frac{c-a}{2^{1+t}p_3}}=-1$, meaning the above trigonometric sum is zero. Hence, $\langle F, e_{\lambda}\rangle_{L^2(\mu)}=0$ as well. 

\bigskip

Suppose that (II) or (III) holds.   In this case we consider $$f_2 = {\mathcal X}_{N^{-r-1}\{0,a\}}-{\mathcal X}_{N^{-r-1}\{b,c\}}, \quad \quad f_3 = {\mathcal X}_{N^{-r-1}\{0,c\}}-{\mathcal X}_{N^{-r-1}\{a,b\}}$$ 
for (II) and (III) respectively. The same argument applies to $F=f\ast{\mathcal X}_{\widetilde{K}}$. In all cases, for any bi-zero set $\Lambda$ of $\mu$,  we have found a non-zero $F\in L^2(\mu_{N^{-1},\D})$ such that $F$ is orthogonal to all $e_{\lambda}$. This means that none of such $E(\Lambda)$ is a complete set. We conclude the proof of the theorem.
\end{proof}

\section{Concluding Remark}

This paper offered a complete characterization of the spectral self-similar measures generated by four similitudes. The case for two similitudes  was  first solved by Dai \cite{D2012} and most of the later arguments were developed from Dai's paper and also \cite{DHL2014}. Nonetheless, with all theorems we introduced in this paper, we can now easily provide  a  complete characterization of the spectral self-similar measures with two or three similitudes

\begin{theorem}
Suppose that $\mu_{\rho,\D,{\bf p}}$ is a spectral self-similar measure with $0\in\D\subset\R^+$ and $\#\D = 2$ or $3$. Then the {\L}aba-Wang conjecture holds: 
  \begin{enumerate}
    \item ${\bf p}$ is a equal-weight probability vector.
    \item $\rho = 1/N$ for some integer $N$.
    \item There exists $\alpha>0$ such that $\D = \alpha {\mathcal C}$ where ${\mathcal C}\subset \Z^+$ and ${\mathcal C}$ tiles $\Z_N$. i.e. there exists $\B$ such that ${\mathcal C}\oplus\B\equiv \Z_N$ (mod N).
    \end{enumerate}
\end{theorem}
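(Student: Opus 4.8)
The plan is to treat (i) as already settled and then run the two cardinalities in parallel, since the argument for $\#\D=2$ is a stripped-down version of the one for $\#\D=3$. Statement (i) is immediate from the Deng--Chen theorem (Theorem~\ref{Theorem_DC}), so I may assume equal weights and write $\mu_{\rho,\D}$. Since spectrality of $\mu_{\rho,\D}$ forces ${\mathcal Z}(\widehat{\mu}_{\rho,\D})$, and hence ${\mathcal Z}(M_{\D})$, to be non-empty, the first task in each case is to pin down ${\mathcal Z}(M_{\D})$ exactly from a vanishing-sum analysis; this both forces rationality of the digits and exhibits ${\mathcal Z}(M_{\D})$ as a subset of a lattice, which is precisely the hypothesis needed to invoke the An--Wang theorem (Theorem~\ref{lem8}) and conclude $\rho=1/N$. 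The remaining work is to upgrade ``$N$ integer'' to the correct divisibility and to exhibit an explicit tiling complement in $\Z_N$.

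For $\#\D=2$, rescaling reduces to $\D=\{0,1\}$, where $M_{\D}(\xi)=\frac12(1+e^{-2\pi i\xi})$ vanishes exactly on $\frac12{\mathbb O}\subset\frac12\Z$. The An--Wang theorem then gives $\rho=1/N$. To force $N$ even I would mirror the odd-$N$ argument of Lemma~\ref{main-lem1}: if $N$ were odd then $N^{k}{\mathbb O}\subset{\mathbb O}$, so ${\mathcal Z}(\widehat{\mu}_{\rho,\D})=\bigcup_{k\ge1}\frac{N^{k}}{2}{\mathbb O}\subset\frac12{\mathbb O}={\mathcal Z}(M_{\D})$, whence every bi-zero set is a bi-zero set of the two-point measure $\delta_{\D}$ and therefore finite --- contradicting spectrality. (Alternatively, this is exactly the regime covered by the Hu--Lau theorem, Theorem~\ref{lem7}.) With $N$ even, $\B=2\Z_N$ is a tiling complement, so $\{0,1\}\oplus\B\equiv\Z_N$ and (iii) holds.

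For $\#\D=3$, I would first divide out by the smallest nonzero digit to write $\D=\{0,1,c\}$ with $c>1$, and analyze when $1+e^{-2\pi i\xi}+e^{-2\pi i c\xi}=0$. Three unit-modulus numbers summing to zero must be the three cube roots of unity, so any zero $\xi$ satisfies $\xi\in\frac13\Z$ and $c\xi\in\frac13\Z$ with complementary non-zero residues; eliminating $\xi$ yields $c=(1+3m)/(2+3k)\in\Q$. Thus a non-empty zero set already forces $c$ rational, and after clearing denominators I may take $\D=\{0,a,b\}\subset\Z^{+}$ with $\gcd(a,b)=1$. The same vanishing analysis then gives ${\mathcal Z}(M_{\D})=\frac13 S\subset\frac13\Z$, where $k\in S$ iff $\{0,ak,bk\}\equiv\{0,1,2\}\pmod 3$; in particular non-emptiness is equivalent to $\{a\bmod 3,\,b\bmod 3\}=\{1,2\}$, and then $S=\Z\setminus3\Z$. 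An--Wang again yields $\rho=1/N$. To force $3\mid N$ I repeat the Lemma~\ref{main-lem1} argument with $3$ in place of $2$: if $3\nmid N$ then multiplication by $N^{k}$ preserves $\Z\setminus3\Z$, so ${\mathcal Z}(\widehat{\mu}_{\rho,\D})\subset{\mathcal Z}(M_{\D})$ and bi-zero sets are again finite. Finally, when $3\mid N$ the reduction map $\pi\colon\Z_N\to\Z_3$ carries $\D$ bijectively onto $\Z_3$, so $\B=3\Z_N$ is a tiling complement and $\D\oplus\B\equiv\Z_N$; this also shows no product-form is needed, giving the honest statement (iii).

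The steps using the cited theorems are essentially bookkeeping; the one place requiring genuine care is the three-digit vanishing-sum analysis, where I must extract simultaneously the rationality of $c$ and the precise mod-$3$ description of $S$ from the single equation $1+e^{-2\pi i a\xi}+e^{-2\pi i b\xi}=0$, and then confirm that the divisibility argument (the analog of Lemma~\ref{main-lem1}) really does trap every bi-zero set inside ${\mathcal Z}(M_{\D})$ when $3\nmid N$. The verification that $\B=2\Z_N$ and $\B=3\Z_N$ are direct-sum complements is routine residue counting.
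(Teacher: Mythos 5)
Your proposal is correct and follows essentially the same route as the paper's own proof: Deng--Chen for (i), the vanishing-sum analysis to force rationality and pin down ${\mathcal Z}(M_{\D})$ as $\frac12{\mathbb O}$ (resp.\ $\frac13(\Z\setminus3\Z)$), An--Wang for $\rho=1/N$, the containment argument ${\mathcal Z}(\widehat{\mu})\subset{\mathcal Z}(M_{\D})$ to force $2\mid N$ (resp.\ $3\mid N$), and the explicit complement $\B$. The only cosmetic differences are your normalization to $\{0,1,c\}$ before clearing denominators and your offering of the Lemma \ref{main-lem1}-style argument alongside the Hu--Lau route in the two-digit case, both of which match the paper in substance.
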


\begin{proof}  (i) follows from Theorem \ref{Theorem_DC}. Next we will prove condition (ii) and (iii) in the following two cases.

 When $\D=\{0, a\}$ has two elements, we let ${\mathcal C}=\{0, 1\}$. Then  
$\D=a{\mathcal C}$ and ${\mu}_{\rho, {\mathcal C}}$ is also a spectral measure. In this case, ${\mathcal Z}(M_{{\mathcal C}})=\frac1{2}{\mathbb O}$ and 
$${\mathcal Z}(\widehat{\mu}_{\rho, {\mathcal C}})=\bigcup_{k=1}^{\infty}\frac{\rho^{-k}}{2}{\mathbb O}$$
has a zero set of Bernoulli structure. We immediately know that it is necessary that  $\rho=\frac{1}{N}$ with $N$ is even. So ${\mathcal C}$ tiles $\Z_N$. This proves (ii) and (iii).

When $\D=\{0, a, b\}$ has three elements, we have 
$$M_{\{0, a, b\}}(\xi)=\frac13(1+e^{2\pi i a\xi}+e^{2\pi i b\xi})=0  \Longleftrightarrow  \{a\xi, b\xi\}=\left\{\frac13+k_1, \frac23+k_2\right\}$$
for some integers $k_1, k_2$.  Then 
$$\frac{a}{b}=\frac{1+3k_1}{2+3k_2}\quad \text{or }\quad\frac{2+3k_2}{1+3k_1}$$ 
is a rational number.  We write it $\frac{a}{b}=\frac{a_1}{b_1}$ to be the simplest form, that is $a_1, b_1$ are co-prime positive integers. Let ${\mathcal C}=\{0, a_1, b_1\}$.  Then $\D=\frac{b}{b_1}{\mathcal C}$ and 
\begin{equation}\label{eq-C}
    {\mathcal C}\equiv\{0, 1, 2\}\pmod3.
\end{equation}
Since $\mu_{\rho, {\mathcal C}}$ is also a spectral measure and 
$${\mathcal Z}(M_{{\mathcal C}})=\frac1{3}(\Z\setminus3\Z),$$
by  An-Wang Theorem (Theorem \ref{lem8}), $\rho = 1/N$ for some integer $N$. This proves (ii). If $N$ is not divisible by $3$, then 
$${\mathcal Z}(\widehat{\mu}_{\rho, {\mathcal C}})=\bigcup_{k=1}^{\infty}\frac{N^k}{3}(\Z\setminus3\Z)\subset\frac13(\Z\setminus3\Z).$$
It implies that any bi-zero set  $\Lambda$  of $\mu_{\rho, {\mathcal C}}$ is also a bi-zero set of $\delta_{{\mathcal C}}$. So $\Lambda$ must be finite and hence can not a spectrum of $\mu_{\rho, {\mathcal C}}$. This contradicts to $\mu_{\rho, {\mathcal C}}$ is a spectral measure. So  $N$ is divisible by $3$. From \eqref{eq-C}, ${\mathcal C}$ tiles $\Z_N$.
\end{proof}

We see that the equation as vanishing sum of complex numbers of modulus one plays an important beginning role to set out our proofs. This set of equation can be completely solved only when there are at most four complex numbers. 

\medskip
The (modified) {\L}aba-Wang conjecture becomes a difficult problem to study if  the number of  digits is more than four because some irrationality phenomenon began to exist when the number of digits is five or more. 
The following example provides a case with purely irrational roots with five digits. It can be found in \cite{AF2019}.

\begin{example}
(See \cite[Example 7, p.1207]{AF2019}) Let $\D = \{0,1,3,5,6\}$. Then $M_{\D}(\xi)$ consists only of irrational roots for $\xi$. By periodicity, 
$$
{\mathcal Z}(M_{\D}) = \Theta+\Z
$$
where $\Theta\subset \R\setminus {\mathbb Q}$. In this case, an easy argument shows that  there does not exist any $\alpha>0$ such that  ${\mathcal Z}(M_{\D}) \subset \alpha\Z.$ Therefore, the assumption for Theorem \ref{lem8} is not satisfied. Hence, we cannot conclude immediately that $\mu_{\rho,\D}$ is spectral measure implies that $\rho^{-1}$ is an integer. We believe that none of such $\mu_{\rho,\D}$ can be a spectral measure, but a more intensive study will be needed in this case. 
\end{example}

In the end, we also remark that the digit cardinality equal to 4 or less has also been studied in other cases of singular measures, including Moran measures, and Sierpinski measures, one may refer to the papers \cite{AHL2015, DFY2021, FW2017, LDL2022} for more details. 

\medskip

\noindent{\bf Acknowledgment.} This research field of fractal spectral measures was first brought to Chun-Kit Lai when he was a PhD student of Professor Ka-Sing Lau in 2007-2012. At the same time,  Xinggang He, as a former PhD student of Ka-Sing, joined in the research project while visiting The Chinese University of Hong Kong back in 2010. Lixiang An joined the research team while she was a PhD student of Xinggang and later she became a Postdoctoral Fellow of Ka-Sing. 

\medskip

We are indebted to Professor Ka-Sing Lau's deep insight into this problem and his constant encouragement to all of us over many years.  We were sad to hear the passing of Ka-Sing. This paper is dedicated to the memory of Ka-Sing and particularly to his pioneering and fundamental contribution to the theory of fractal spectral measures. Ka-Sing was a great mentor, caring teacher and sincere friend. He will be dearly missed by all of us.













\end{document}